\newtheorem{thm}{Theorem}[section]
\newtheorem{prop}[thm]{Proposition}
\newtheorem{defn}{Definition}[section]
\newtheorem{cor}[thm]{Corollary}
\title{The space of framed chord diagrams as a Hopf module}
\author{Maksim Karev}
\thanks{Research is supported in part by the RFBR grant 13-01-00383a.}
\address{St.-Petersburg division of V.A. Steklov institute of Mathematics. 191023, Fontanka 27, Saint-Petersburg, Russia.}
\email{max.karev@gmail.com}
\begin{document}

\begin{abstract}
This note is dedicated to the study of a Hopf module structures on the space of framed chord diagrams and framed graphs. We also introduce a framed version of the chromatic polynomial and propose two methods to construct framed weight systems.
\end{abstract}

\maketitle
\section*{Introduction}

The study of finite-type invariants of immersions of the circle into the plane was initiated by V. Arnold. In his paper (\cite{Arnold}), he gave a description of the three basic invariants, corresponding three different components of the discriminant subspace in the space of the immersions: the closure of the immersions with a direct self-tangency ($J^+$), the closure of the immersions with an inverse self-tangency ($J^-$), and the closure of the immersions with a triple point ($St$) (for the details see~\cite{Arnold}). He also noticed, that circle immersions into a plane are closely related to Legendrian knots in a solid torus $S^1\times \mathbb R^2$. Indeed, a projection of a Legendrian knot along the $S^1$ is a plane curve.

We call the union of $J^+$ and $J^-$ components of the discriminant subspace the $J$-discriminant. A finite-type invariant associated with the $J$-discriminant is referred to as $J$-invariant.

 In the paper~\cite{goryunov} V.~Goryunov introduces the notion of a marked chord diagrams, describing the combinatorics of a non-generic Legendrian knot in the solid torus, and presents a universal finite-type invariant for such immersions, which gives rise to the universal finite-type $J^+$-invariant of circle immersions. In his 1997 article, J.~W.~Hill shows that any relation on symbols of finite-type invariants of Legendrian knots without so-called dangerous self-tangencies in a solid torus  is a consequence of certain relations, which are called $\mu$-marked 4T-relations. For the details see~\cite{Hill}.
 
The notion of framed chord diagram (see the definition below) appeared in  an attempt to understand the combinatorics $J$-invariants  in the paper of S.~Lando (see~\cite{Lando}). By an analogy with the knots finite-type invariant theory, a framed chord diagram represents the combinatorial type of a non-generic immersion, admitting simple self-tangencies (both direct and inverse) as singularities. Similarly to the knots case, the symbol of a finite-type invariant of immersions gives rise to a functional on the space of framed chord diagrams. Of course, not every functional comes from a finite-type invariant, and it turns out that those that actually come from a finite-type invariant must satisfy the framed 4T-relations (see the definitions below). Hill's results imply, that the framed 4T-relations are not only necessary, but also sufficient for a functional to come from a $J$-invariant.

The filtered space of knot finite-type invariants is dual to the bialgebra of knots with singular knot filtration (see, for example,~\cite{CDbook}). The induced graded Hopf algebra structure on the space of ordinary chord diagrams implies that the latter is just a polynomial algebra. $J$-invariants admit a natural product, which gives rise to a well-defined coproduct on the vector space of framed chord diagrams. However, it is not clear yet whether the space of framed chord diagrams admits a nice product, which makes the study of the space more difficult.

In this note we partly fix the lack of algebra structure on the space of framed chord diagrams by providing it with a Hopf module structure over the Hopf algebra of (non-framed) chord diagrams.

Weight system (or, simply speaking, linear map from the space of chord diagrams satisfying 4T-relations) is a very important notion in the knots finite-type invariants theory, as, due to the Vassiliev-Kontsevich theorem, it allows to construct knots invariants. We prove, that any weight system admits a lifting to a framed weight system via the discoloration operator (see the definition in the section 3). The comodule structure on the space of framed chord diagrams also gives a way to produce framed weight systems.

The space of framed chord diagrams admits a well-defined map to the space of framed graphs (introduced in~\cite{Lando}), induced by the operation of taking the intersection graph of a diagram. It turns out, that a framed version of the chromatic polynomial can be defined, and also used to construct a framed weight system.

This note is the natural development of the ideas of~\cite{Lando}. It consists of the introduction and three sections. The Hopf module structure on the space of framed chord diagrams is introduced in Section 1. Section 2 is devoted to similar properties of the space of framed graphs. The description of a framed chromatic polynomial, along with two methods of construction of framed weight systems are presented in the Section~3.

The author would like to express his sincere gratitude to S. Duzhin for his support, numerous discussions, and the permission to use pictures from~\cite{CDbook}. I also thank S. Lando for the interest and valuable comments that helped to make the text easier to understand. The work was inspired by the article of V. Kleptsyn and E. Smirnov ``Plane curves and bialgebra of Lagrangian subspaces.'' (\cite{KS}).

In this note, $\mathbb K$ always denotes a chosen field of characteristics 0.

\section{Framed chord diagrams}

Let us first recall the definition of the algebra of chord diagrams $\mathcal A$.

\begin{defn}
Let $n\ge 0$. An \emph{order $n$ chord diagram} is a collection of $n$ chords~---  pairs of
pairwise distinct points on an oriented circle, considered up to orientation preserving diffeomorphisms of the circle.
\end{defn}

In the figures below, the points in a pair forming a chord are drawn connected by an arc.

\begin{defn}
The space $\mathcal A$ is the space spanned by the set of the chord diagrams modulo the \emph{4T-relations}, shown in Figure~\ref{4T} (a).
\end{defn}

The pictures in Figure~\ref{4T} should be understood as follows. It is assumed that the dashed part of the circle may contain other chords' endpoints, but their combinatorial structure is the same for all the four diagrams entering the relation. Only the chords whose endpoints are inside solid intervals are shown explicitly. One of the explicitly shown chords is again the same for all the diagrams involved. We refer to the chord that is not shared by all the diagrams in the relation as the \emph{jumping chord}.

The space $\mathcal A$ is endowed with a natural product $m_\mathcal A\colon \mathcal A \otimes \mathcal A\to \mathcal A$, a natural coproduct $\Delta_\mathcal A\colon \mathcal A\otimes \mathcal A$, the unit $e_\mathcal A\colon \mathbb K \to \mathcal A$, the counit $\epsilon_\mathcal A\colon \mathcal A\to \mathbb K$, and the antipode $\mathcal S_\mathcal A\colon \mathcal A\to \mathcal A$, making $\mathcal A$ into a commutative cocommutative Hopf algebra. The multiplication is given by glueing two diagrams, and the coproduct is given by summing up the tensor products of pairs of chord diagrams formed by a decomposition of the set of chords into two complimentary subsets. For the precise definitions of the maps, we refer the reader to~\cite{CDbook}, section 4.4.

We define the space $\mathcal M$ of framed chord diagrams following Lando~\cite{Lando}.

\begin{defn}
Let $n\ge 0$. An \emph{order $n$ framed chord diagram} is a chord diagram along with a fixed \emph{framing}. A framing is a map from the set of chords to $\mathbb Z/2\mathbb Z$. The preimages of $0$ under this map are called \emph{oriented chords}, the preimages of $1$ are called \emph{disorienting chords}.
\end{defn}

An example of a framed chord diagram is shown in Figure~\ref{fcd}.

\begin{figure}[ht]
\begin{center}
\includegraphics[width=1.5cm]{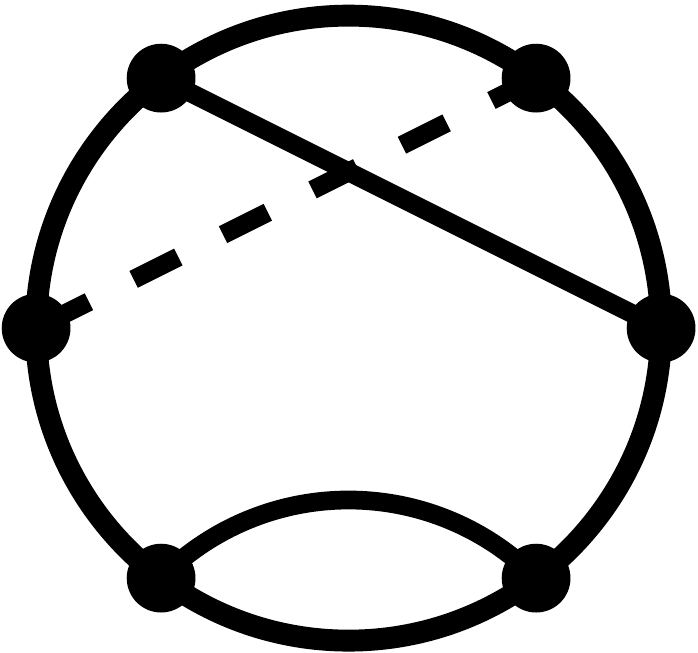}
\end{center}
\caption{An order 3 framed chord diagram. The orientation of the circle (here and everywhere below) obeys the blackboard convention. Oriented chords are drawn as solid arcs, disorienting chord is drawn as dashed arc.}\label{fcd}
\end{figure}

\begin{defn}
A framed chord diagram is said to be \emph{white} if all its chords are disorienting. A framed chord diagram is said to be \emph{black} if all its chords are oriented.
\end{defn}

We consider framed chord diagrams as generating elements of a $\mathbb K$-vector space $M$ over $\mathbb K$.

\begin{defn}
The space $\mathcal M$ of framed chord diagrams is the quotient space of~$M$ modulo the relations shown in figure~\ref{4T} (a) and (b).
\begin{figure}
\begin{center}
\includegraphics[width=1.5cm]{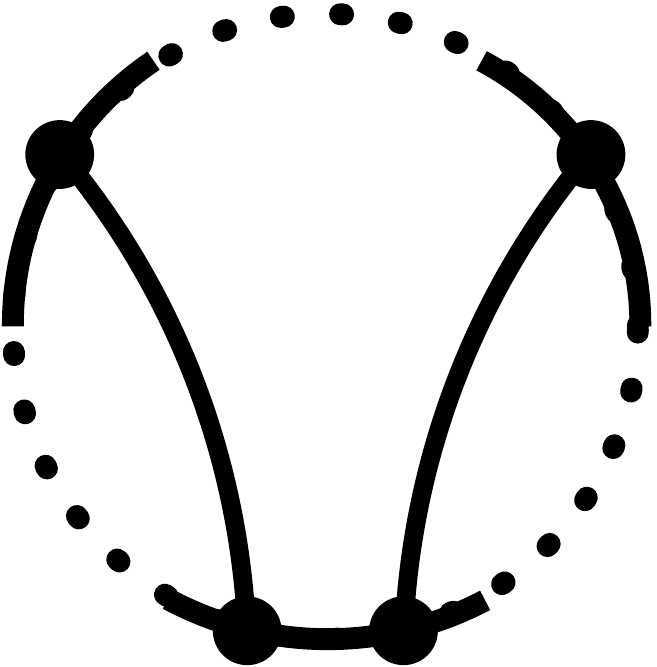} \raisebox{0.7cm}{--} \includegraphics[width=1.5cm]{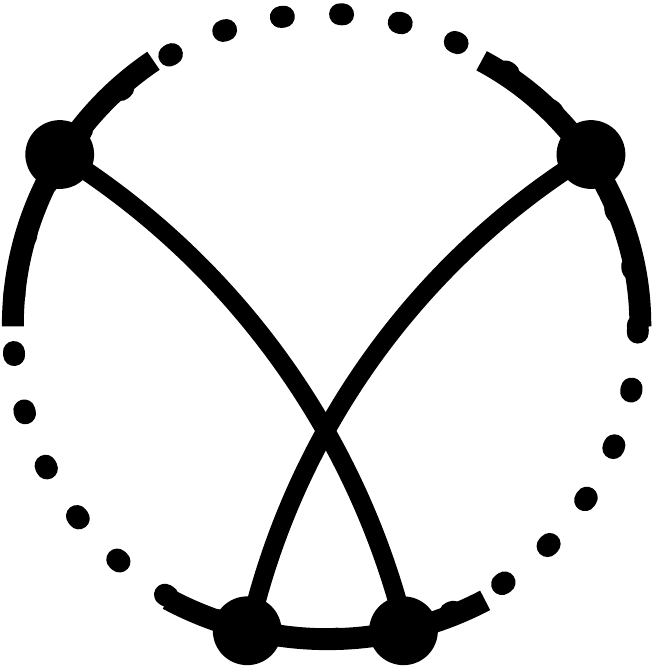} \raisebox{0.7cm}{=} \includegraphics[width=1.5cm]{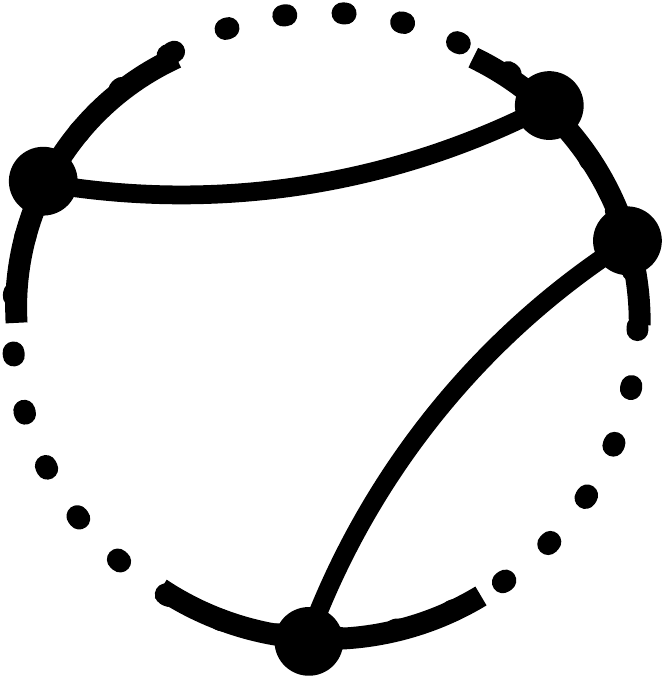} \raisebox{0.7cm}{--} \includegraphics[width=1.5cm]{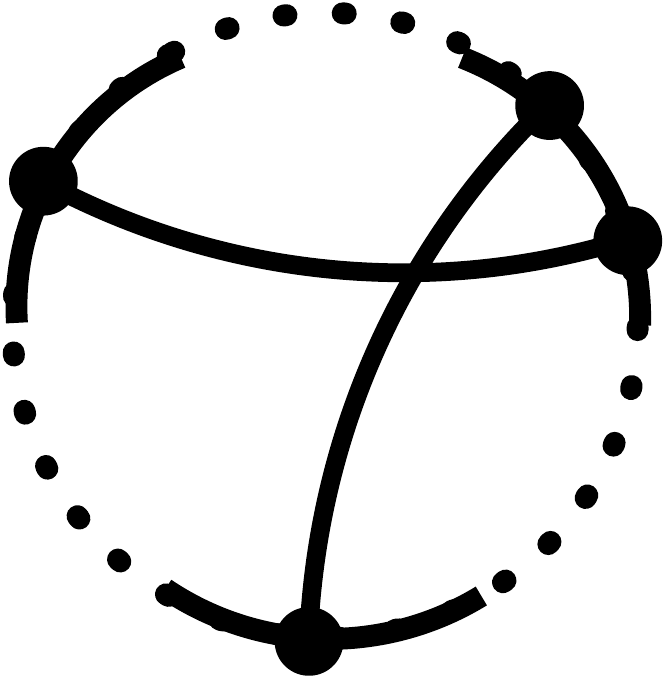}
\vspace{0.3cm}
(a)

\includegraphics[width=1.5cm]{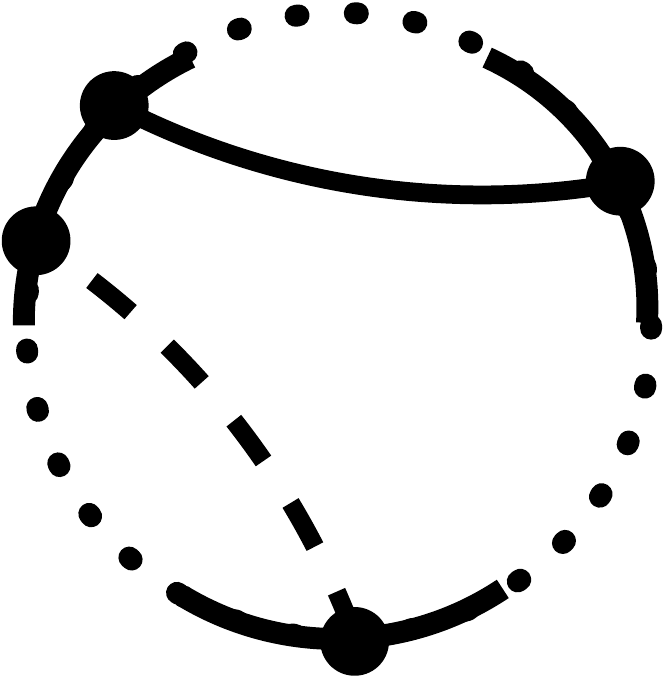} \raisebox{0.7cm}{--}
\includegraphics[width=1.5cm]{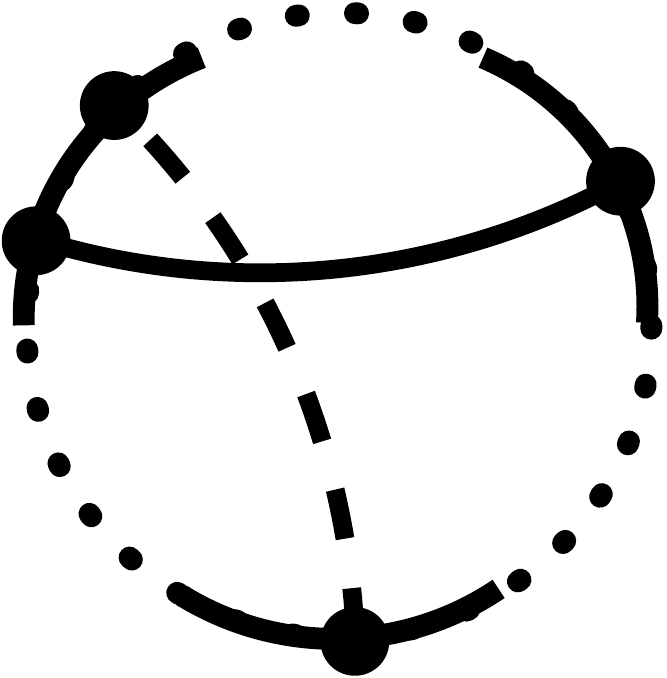}\raisebox{0.7cm}{=}
\includegraphics[width=1.5cm]{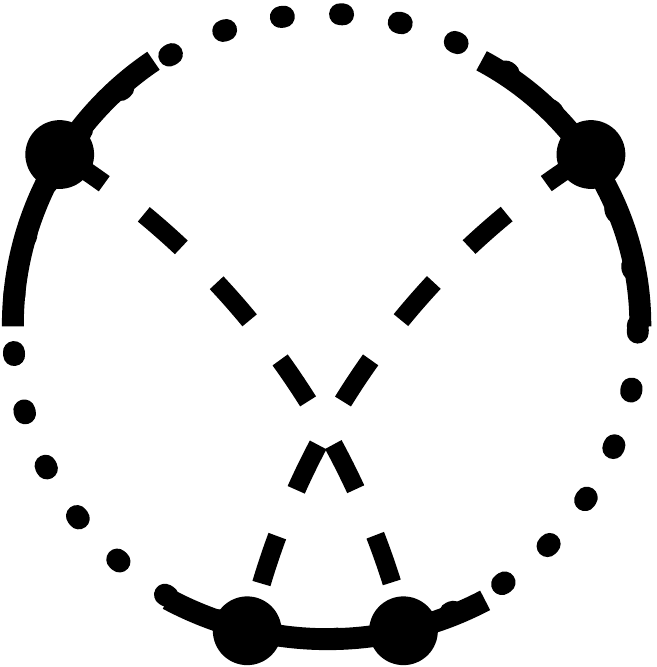}\raisebox{0.7cm}{--}
\includegraphics[width=1.5cm]{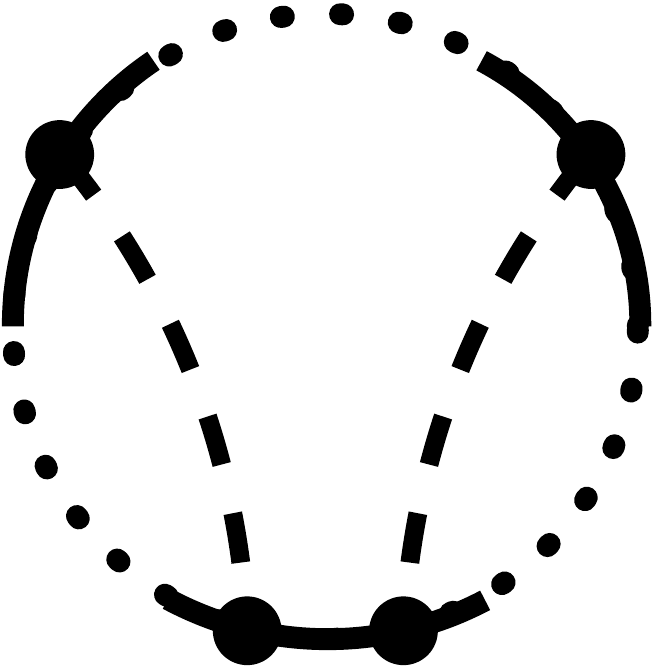}\raisebox{0.7cm}{=}
\includegraphics[width=1.5cm]{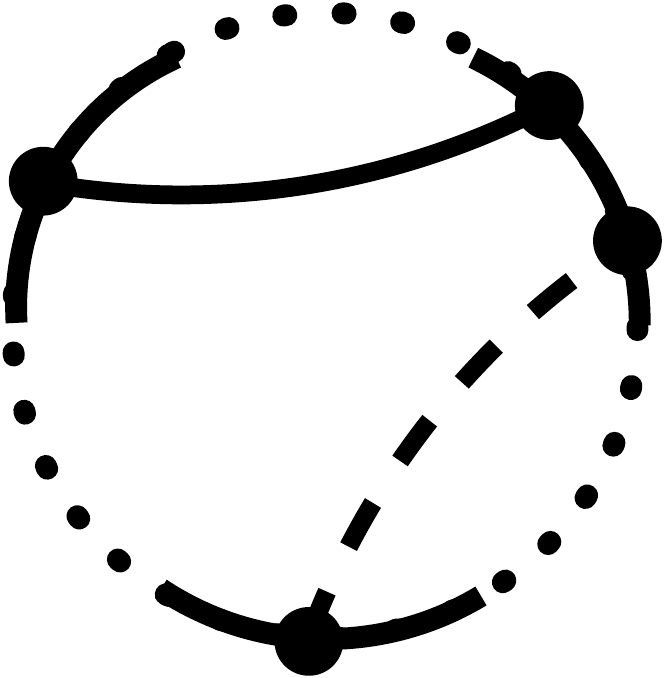}\raisebox{0.7cm}{--}
\includegraphics[width=1.5cm]{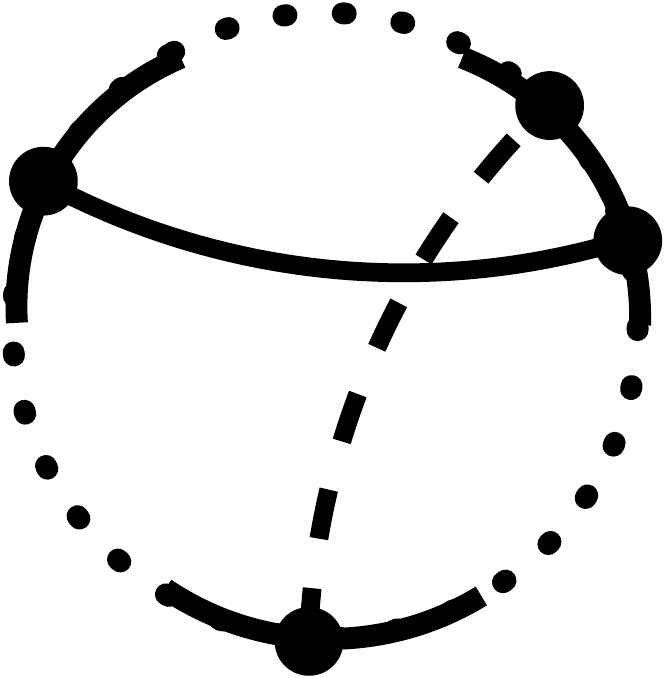}
(b)
\caption{4T-relations. The relations of type (a) are common for both $\mathcal A$ and $\mathcal M$.}\label{4T}
\end{center}
\end{figure}
We also refer to these relations as to \emph{4T-relations}.
\end{defn}

Clearly, the space $\mathcal M$ is naturally graded by the total number of chords.

Unfortunately, a natural way to define a multiplication on $\mathcal M$ is not known yet. However, the multiplication of a framed diagram by a black diagram is well-defined.

\begin{prop}
Let $c$ be a black diagram, and $d$ an arbitrary framed chord diagram. Then the product of $c$ and $d$, defined by cutting and glueing the two circles (see Figure~\ref{cd_prod}), is well-defined modulo 4T-relation, i.e. it does not depend on the choice of the points where the diagrams are cut.
\end{prop}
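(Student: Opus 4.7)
My plan is to adapt the standard argument that establishes well-definedness of the product on $\mathcal A$ (as in~\cite{CDbook}, 4.4), paying careful attention to which framings can appear. Since the cut points on $c$ and on $d$ are chosen independently, it suffices to verify two statements separately: the product does not depend on the cut point on $d$, and it does not depend on the cut point on $c$. In either case, any two cut positions differ by a sequence of elementary moves in which the cut hops past a single chord endpoint, so the task reduces to showing that one such elementary move changes the product by zero modulo 4T.

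I would treat the case of moving the cut on $d$ in detail; the case of $c$ is analogous and in fact easier. Suppose the cut on $d$ is pushed past one endpoint $\alpha$ of a chord $x$ of $d$. After gluing, the geometric effect on the resulting framed chord diagram is to transport $\alpha$ across the entire arc that was glued in from $c$, i.e.\ across every endpoint of every chord of $c$. Group these endpoints by the chord of $c$ they belong to: for each chord $y$ of $c$ with endpoints $\beta_1,\beta_2$, moving $\alpha$ first past $\beta_1$ and then past $\beta_2$ yields a difference of two framed chord diagrams, which by construction is the left hand side of a 4T-relation with jumping chord $x$ and fixed chord $y$. Summing the local contributions over the chords of $c$ produces a telescoping sum whose total is precisely the change in the product.

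The crucial observation is that the fixed chord in each of these 4T-relations is a chord of $c$, hence \emph{oriented}, because $c$ is black. The jumping chord $x$, however, is an arbitrary chord of $d$ and may be oriented or disorienting. When $x$ is oriented, the required relation is of type (a) in Figure~\ref{4T}, which is a valid relation in $\mathcal M$. When $x$ is disorienting, the required relation is precisely one of those exhibited in (b), in which the fixed chord is solid (oriented) and the jumping chord is dashed (disorienting). Thus each local contribution vanishes in $\mathcal M$, and the total change vanishes as well.

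The one point that needs to be checked carefully, and is the only real obstacle, is that relations (a) and (b) between them cover every configuration of framings that arises in the telescoping sum — namely an oriented fixed chord paired with either an oriented or a disorienting jumping chord. This is exactly why the hypothesis that $c$ is black is essential: it forces the fixed chord in every 4T-relation we invoke to be oriented, so that only the framings exhibited in Figure~\ref{4T} are needed. Once this matching of framings is verified against the diagrams in Figure~\ref{4T}(b), the same argument applied to moving the cut on $c$ uses only type (a) (both chords oriented), and the proposition follows.
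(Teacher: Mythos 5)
Your treatment of the cut point on $d$ is correct and is essentially the paper's own argument: transporting the displaced endpoint of the chord $x$ of $d$ across the whole block of endpoints glued in from $c$, the fixed chord in every elementary 4T-relation is a chord of $c$ and hence oriented; since a jump along an oriented chord does not change the framing of the jumping chord, the contributions of the two endpoints of each chord of $c$ cancel whether $x$ is oriented or disorienting, using relations (a) and (b) of Figure~\ref{4T} exactly as you say. This is precisely what the paper means by ``it is enough to use only the relations that do not change the number of oriented chords.''

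The gap is in your final sentence, where you dispose of the cut point on $c$ by asserting that ``the same argument'' applies and ``uses only type (a) (both chords oriented).'' If you literally run the same argument, moving the cut on $c$ past an endpoint $\beta$ of a chord $y$ of $c$ transports $\beta$ across the block glued in from $d$, so the fixed chords of the elementary 4T-relations are now chords of $d$ --- and these need not be oriented. For a disorienting fixed chord the framed 4T-relation alters the framing of the jumping chord (this is exactly the kind of relation the paper's restriction is designed to avoid), so the two hops past its endpoints produce differences of \emph{different} diagrams and the telescoping sum does not cancel. The repair --- and this is what the paper's appeal to the circular-versus-linear-diagram argument amounts to --- is to transport $\beta$ the other way around the circle, through the remaining endpoints of $c$: there every fixed chord is oriented, the lone remaining endpoint of $y$ itself contributes nothing because interchanging two adjacent endpoints of the same chord does not change the diagram, and only type (a) relations are needed. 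With that substitution in the $c$-case your proof is complete and coincides with the paper's.
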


\begin{figure}[h]
\begin{center}
\includegraphics[width=3cm]{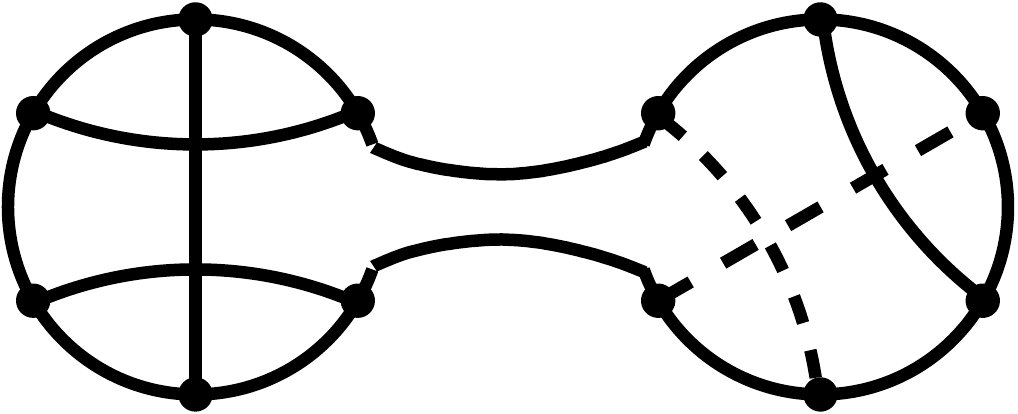} \raisebox{0.5cm}{=}
\includegraphics[width=1.2cm]{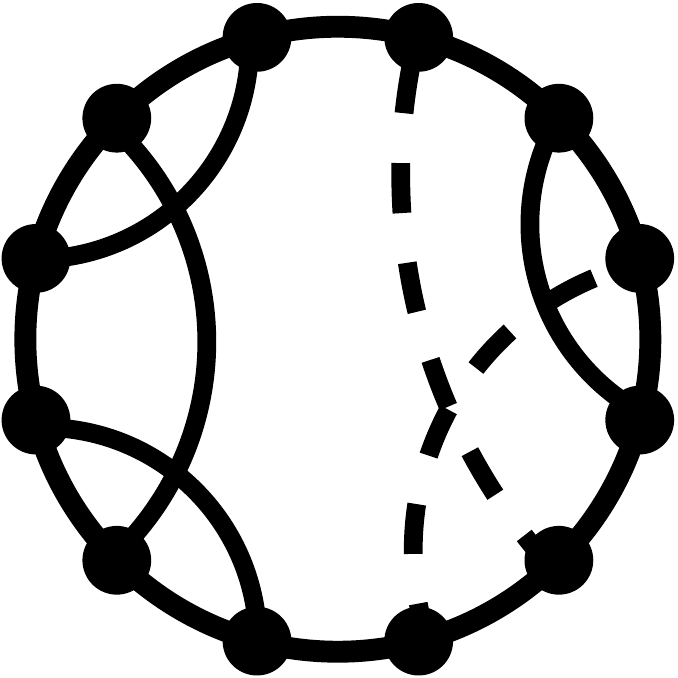}
\end{center}
\caption{Product of diagrams.}\label{cd_prod}
\end{figure}

\begin{proof}
Like in the non-framed diagrams case, this easily follows from the framed 4T-relations. It is enough to use only the relations that do not change the number of oriented chords. The proof of the fact that the product does not depend on the choice of the cut-point of $c$ repeats word-for-word the proof that the natural map from the algebra of  circular chord diagrams to the algebra of linear chord diagrams is well-defined (see~\cite{CDbook}, Section 4.7). The cut-point of $d$ choice independence follows by a direct generalization of the corresponding proof in the non-framed case (see~\cite{CDbook}, Lemma 4.13), since the framing of the jumping chord remains unchanged under a jump along an oriented chord.
\end{proof}

The bilinear extension of the above described product
$$m_\mathcal M\colon \mathcal A \otimes \mathcal M \to \mathcal M$$
makes $\mathcal M$ into a $\mathcal A$-module.

Now we endow $\mathcal M$ with an additional algebraic structure.

\begin{defn}
Let $H$ be a Hopf algebra over the field $\mathbb K$ with a coproduct $\Delta_H$, and a counit $\epsilon_H$. Then a $\mathbb K$-vector space $V$ is called a (left) $H$-comodule if there is a map
$$\delta_V \colon V\to H\otimes V,$$
satisfying the identities
$$ (id\otimes \delta_V)\circ \delta_V = (\Delta_H \otimes id)\circ \delta_V, \mbox{ and}$$
$$\iota \circ (\epsilon_H \otimes id) \circ \delta_V = id,$$
where $\iota$ is the canonical isomorphism $\mathbb K\otimes V \to V$.
\end{defn}

Recall (see~\cite{Lando}) that the space $\mathcal M$ admits a well-defined coalgebra structure given by the map $\Delta_\mathcal M$:
$$\Delta_\mathcal M \colon d\mapsto \sum d_I\otimes d_J,$$
where the summation is carried out over all possible ways to decompose the set of chords of $d$ into two non-intersecting subsets $I$ and $J$, and $d_I$ denotes the subdiagram of $d$ formed by the chords belonging to $I$.

There is an inclusion $\mathcal A\to \mathcal M$. Define the linear map $Pr_\mathcal M\colon \mathcal M \to \mathcal A$  as
$$Pr_\mathcal M (d) = \left\{\begin{array}{l} d,\mbox{ if $d$ is a black diagram,}\\									
						0, \mbox{ otherwise.}\end{array}\right.$$

The well-definedness of $Pr_\mathcal M$ is clear.

Put
$$\delta_\mathcal M = (Pr_\mathcal M\otimes id)\circ \Delta_\mathcal M \colon \mathcal M \to \mathcal A\otimes \mathcal M.$$
As it is just a composition of linear maps, it is also a well-defined linear map.

\begin{prop}
The map $\delta_\mathcal M$ endows $\mathcal M$ with an $\mathcal A$-comodule structure.
\end{prop}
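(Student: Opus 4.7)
The plan is to verify directly, at the level of generators, that the two comodule axioms hold; since every map involved is already known to be well-defined on the quotient spaces, it suffices to compute on a framed chord diagram $d$ with chord set $C(d)$.

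First, I would rewrite $\delta_\mathcal M$ explicitly. Unfolding the definitions,
\[
\delta_\mathcal M(d) \;=\; \sum_{\substack{I\sqcup J=C(d)\\ d_I\text{ is black}}} d_I\otimes d_J,
\]
because $Pr_\mathcal M$ kills every subdiagram that contains at least one disorienting chord. The key combinatorial observation, to be used throughout, is that $d_I$ is black if and only if every chord of $d$ lying in $I$ is oriented; consequently, if $I=I_1\sqcup I_2$, then $d_I$ is black iff both $d_{I_1}$ and $d_{I_2}$ are black.

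For the counit axiom, note that $\epsilon_\mathcal A$ vanishes on every chord diagram except the empty one, on which it takes value $1$. In the expansion of $\delta_\mathcal M(d)$ above, the only summand with $d_I=\emptyset$ is the one corresponding to $I=\emptyset$, $J=C(d)$, which contributes $\emptyset\otimes d$. Hence $\iota\circ(\epsilon_\mathcal A\otimes id)\circ\delta_\mathcal M(d)=d$, as required.

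For coassociativity, I would expand both sides and check they coincide term by term. Using the formula for $\Delta_\mathcal A$ on a black diagram (which is the standard chord-diagram coproduct),
\[
(\Delta_\mathcal A\otimes id)\circ\delta_\mathcal M(d)
\;=\;\sum_{\substack{I_1\sqcup I_2\sqcup J=C(d)\\ d_{I_1\sqcup I_2}\text{ black}}} d_{I_1}\otimes d_{I_2}\otimes d_J,
\]
and by the observation above the condition on $d_{I_1\sqcup I_2}$ is equivalent to $d_{I_1}$ and $d_{I_2}$ both being black. On the other hand,
\[
(id\otimes\delta_\mathcal M)\circ\delta_\mathcal M(d)
\;=\;\sum_{\substack{I\sqcup J=C(d)\\ d_I\text{ black}}} d_I\otimes\delta_\mathcal M(d_J)
\;=\;\sum_{\substack{I\sqcup K\sqcup L=C(d)\\ d_I,\,d_K\text{ black}}} d_I\otimes d_K\otimes d_L,
\]
which, after the relabelling $(I,K,L)\mapsto(I_1,I_2,J)$, is the same triple sum. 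Therefore coassociativity holds, and $\delta_\mathcal M$ defines an $\mathcal A$-comodule structure on $\mathcal M$. There is no real obstacle here; the only point requiring a moment's thought is the compatibility of the blackness condition with subdivision of the index set, which is immediate from the definition of a black diagram.
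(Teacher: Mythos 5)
Your proposal is correct and follows the same route as the paper: both comodule axioms are verified by writing $\delta_\mathcal M$ as a sum over decompositions whose first part consists of oriented chords, reducing coassociativity to a three-fold decomposition of the set of oriented chords and the counit axiom to the fact that $\epsilon_\mathcal A$ is supported on the empty diagram. You simply spell out in formulas what the paper states as ``straightforward.''
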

\begin{proof}
The identities $(id\otimes \delta_\mathcal M)\circ \delta_\mathcal M = (\Delta_\mathcal A \otimes id)\circ \delta_\mathcal M$ and $ \iota \circ (\epsilon_\mathcal A \otimes id) \circ \delta_\mathcal M = id$ are straightforward. The first one follows from the presentation of the both sides of the equality as the sums over the decompositions of the set of oriented chords of a diagram into three non-intersecting sets. The second one follows from the fact that the only diagram in $\mathcal A$ having a non-vanishing image under $\epsilon_\mathcal A$ is the empty diagram.
\end{proof}

The module and the comodule structures on $\mathcal M$ are tightly connected.

\begin{defn}
Let $H$ be a Hopf algebra over $\mathbb K$, with a product $m_H$ and a coproduct $\Delta_H$, and let $V$ be a $\mathbb K$-vector space. Then $V$ is called a (left) \emph{$H$-Hopf module} if
\begin{itemize}
\item There is a map $m_V \colon H\otimes V \to V$ endowing $V$ with a (left) $H$-module structure;
\item There is a map $\delta_V \colon V \to H\otimes V$ endowing $V$ with a (left) $H$-comodule structure;
\item The map $\delta_V$ is a morphism of $H$-modules,  where the $H$-module structure on $H\otimes V$ is given by the map
\begin{multline*}
m_{H\otimes V} = (m_H\otimes m_V)\circ (id\otimes s \otimes id)\circ(\Delta_H \otimes (id\otimes id)) \colon \\ H\otimes (H\otimes V) \to H\otimes V,
\end{multline*}
where $s\colon H\otimes H \to H\otimes H$ is the map permuting the tensor factors.
\end{itemize}
\end{defn}

\begin{thm}
The maps $m_\mathcal M$ and $\delta_\mathcal M$ endow $\mathcal M$ with a structure of  an $\mathcal A$-Hopf module.
\end{thm}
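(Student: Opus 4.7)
The module structure $m_\mathcal{M}$ and the comodule structure $\delta_\mathcal{M}$ are already in place, so the only content remaining is the Hopf module compatibility axiom, namely that $\delta_\mathcal{M}\colon \mathcal{M}\to \mathcal{A}\otimes \mathcal{M}$ is a morphism of left $\mathcal{A}$-modules. My plan is to reduce by $\mathbb{K}$-bilinearity to a single generating pair $(a,d)$, where $a$ is a black chord diagram with chord set $A$ and $d$ is a framed chord diagram whose chord set splits as $D=D_o\sqcup D_d$ (oriented and disorienting chords), and then to match the two sides term by term.

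On the right hand side, I would use the explicit formula $m_{\mathcal{A}\otimes \mathcal{M}}(a\otimes(h\otimes v))=\sum (a_{(1)}h)\otimes (a_{(2)}\cdot v)$ derived from the definition, the expansion of $\Delta_\mathcal{A}$ as a sum over subset decompositions of the chord set, and the fact that $Pr_\mathcal{M}$ vanishes on diagrams containing any disorienting chord. This rewrites the right hand side as a double sum indexed by decompositions $A=A_1\sqcup A_2$ and subsets $I\subseteq D_o$, with typical term $(a_{A_1}\cdot d_I)\otimes (a_{A_2}\cdot d_{D\setminus I})$. On the left hand side, I would observe that the oriented chords of $a\cdot d$ are precisely the elements of $A\sqcup D_o$, so any subset $J$ of these oriented chords splits uniquely as $J=A_1\sqcup I$ with $A_1\subseteq A$ and $I\subseteq D_o$; hence $\delta_\mathcal{M}(a\cdot d)$ is naturally indexed by the same combinatorial data.

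The heart of the argument will be the identity $(a\cdot d)_{A_1\sqcup I}=a_{A_1}\cdot d_I$ in $\mathcal{A}$, together with its complementary analogue $(a\cdot d)_{(A\setminus A_1)\sqcup (D\setminus I)}=a_{A\setminus A_1}\cdot d_{D\setminus I}$ in $\mathcal{M}$. Geometrically this says that taking a subdiagram commutes with the cut-and-glue product: after performing the cuts used to construct $a\cdot d$, one may either first glue and then erase the chords not in the chosen subset, or first erase and then glue, with the same result. Substituting this identification into the expansion of the left hand side produces the same double sum obtained for the right hand side, which gives the desired equality.

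The only potentially delicate point in this plan, and the one I expect to be the main (if mild) obstacle, is to verify that the subdiagram-versus-product identity is meaningful at the level of the 4T-quotients rather than only at the level of individual diagrams. This is essentially automatic from the already proved well-definedness of $m_\mathcal{M}$ on $\mathcal{A}\otimes \mathcal{M}$, which encodes precisely the cut-point independence needed to make the above restriction operation compatible with the 4T-relations; after that, the proof becomes a routine matching of indexing sets.
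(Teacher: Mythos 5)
Your proposal is correct and follows essentially the same route as the paper's proof: both reduce to a single product $c\cdot d$ with $c$ black, expand $\delta_\mathcal M(cd)$ as a sum over subsets of the oriented chords of the product, split each such subset into its parts coming from $c$ and from $d$, and identify the resulting subdiagrams with products of subdiagrams of the factors. Your extra remark that the subdiagram-versus-product identity must be checked modulo the 4T-relations (and follows from the well-definedness of $m_\mathcal M$) is a point the paper leaves implicit, but it does not change the argument.
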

\begin{proof}
We have to check that the coproduct map $\delta_\mathcal M$ is a morphism of $\mathcal A$-modules $\mathcal M$ and $\mathcal A\otimes \mathcal M$. Indeed, consider a product $g = m_{\mathcal M} (c\otimes d)$, where $c$ is a black diagram, and $d$ is an arbitrary framed chord diagram.  By definition,
$$\delta_\mathcal M (cd) = \sum_{I \subset OC_{g}} g_I \otimes g'_I,$$
where $OC_{g}$ denotes the set of oriented chords of the framed chord diagram $g=cd$, $g_I$ denotes the subdiagram consisting of the chords of the set $I$ only (with the chords combinatorial structure preserved), and $g'_I$ is the diagram with the chords from the set $I$ \emph{removed}.

Note that every subset $I\subset OC_{g}$ can be presented as a disjoint union $I_c \cup I_d \subset OC_c\cup OC_d$. 
Hence,
$$ \delta_\mathcal M (cd) = \sum_{I_c\subset OC_c} \sum_{I_d\subset OC_d} g_{I_c\cup I_d} \otimes g'_{I_c\cup I_d} =\sum_{I_c\subset OC_c} \sum_{I_d\subset OC_d} c_{I_c}d_{I_d} \otimes c'_{I_c}d'_{I_d},$$
which clearly equals to $m_{\mathcal A\otimes \mathcal M}(c\otimes\delta_\mathcal M(d))$.
\end{proof}

The main structure result of the theory of chord diagrams is a direct corollary of the Milnor-Moore theorem. It states that every commutative cocommutative connected graded Hopf algebra over a field of characteristic 0 is a polynomial algebra generated by a basis of the subspace of its primitive elements. There is a similar structure theorem for Hopf modules. To formulate it, we need one more definition.

\begin{defn}
Let $H$ be a Hopf algebra over $\mathbb K$, and let $V$ be a (left) $H$-Hopf module with the comodule structure map $\delta_V$. The \emph{covariant submodule} $Co_V\subset V$ is the subspace of $V$ spanned by the covariant elements, i.e., elements $v\in V$ 
satisfying $\delta_V(v) = 1\otimes v$.
\end{defn}

\begin{thm}[Larson-Sweedler, \cite{LS}]
Let $H$ be a commutative cocommutative Hopf algebra over $\mathbb K$ with the coproduct $\Delta_H$, and let $V$ be an $H$-Hopf module with the module structure given by the product $m_V$. Then $V$ is trivial, i.e. the restriction of the product map
$$m_V |_{H\otimes Co_V}\colon H\otimes Co_V \to V$$
is an $H$-Hopf modules isomorphism. The $H$-Hopf module structure on $H\otimes Co_V$ is given by the maps
$$m_{H\otimes Co_V}\colon h_1\otimes (h_2\otimes v) \mapsto h_1 h_2\otimes v, \mbox{ for any $h_1,h_2\in H, v\in Co_V$, and}$$
$$\Delta_{H\otimes Co_V} (h\otimes v) \mapsto \Delta_H(h)\otimes v \mbox{ for any $h\in H, v\in Co_V$}.$$
\end{thm}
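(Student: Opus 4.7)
The plan is to construct an explicit inverse to $\phi := m_V|_{H\otimes Co_V}$ using the antipode $\mathcal{S}_H$ of $H$. Writing $\delta_V(v) = \sum v_{(-1)}\otimes v_{(0)}$ in Sweedler-type notation (with $v_{(-1)}\in H$ and $v_{(0)}\in V$), I would introduce the auxiliary linear maps
$$P\colon V\to V,\qquad P(v) = \sum \mathcal{S}_H(v_{(-1)})\cdot v_{(0)},$$
$$\psi\colon V\to H\otimes V,\qquad \psi(v) = \sum v_{(-1)}\otimes P(v_{(0)}).$$

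The first step is to verify that $P(V)\subseteq Co_V$, i.e.\ that $\delta_V(P(v)) = 1\otimes P(v)$. Applying $\delta_V$ to $P(v)$ and using in turn the Hopf module compatibility (that $\delta_V$ is an $H$-module map into $H\otimes V$ with the structure $m_{H\otimes V}$ specified in the definition), coassociativity of $\delta_V$ to convert iterated coactions into iterated coproducts on the $H$-factor, and finally the antipode identity $m_H\circ(\mathcal{S}_H\otimes id)\circ \Delta_H = e_H\epsilon_H$ should collapse the expression to $1\otimes P(v)$. A direct check shows $P|_{Co_V} = id_{Co_V}$, since $\delta_V(w)=1\otimes w$ forces $P(w) = \mathcal{S}_H(1)\cdot w = w$. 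Consequently $\psi$ takes values in $H\otimes Co_V$.

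Next I would verify that $\psi$ and $\phi$ are mutually inverse. For $\phi\circ\psi(v)$ the computation unwinds to
$$\sum v_{(-1)}\cdot \mathcal{S}_H(v_{(0)(-1)})\cdot v_{(0)(0)} = \sum v_{(-1)(1)}\mathcal{S}_H(v_{(-1)(2)})\cdot v_{(0)} = \sum \epsilon_H(v_{(-1)})\,v_{(0)} = v,$$
using coassociativity of $\delta_V$, the antipode identity, and the counit axiom of $\delta_V$. For $\psi\circ\phi(h\otimes w)$ with $w\in Co_V$, the Hopf module axiom combined with $\delta_V(w)=1\otimes w$ yields $\delta_V(h\cdot w) = \sum h_{(1)}\otimes h_{(2)}\cdot w$; applying $P$ to $h_{(2)}\cdot w$ and invoking the antipode identity once more gives $P(h_{(2)}\cdot w) = \sum \epsilon_H(h_{(2)})w$, whence $\psi(h\cdot w) = \sum h_{(1)}\epsilon_H(h_{(2)})\otimes w = h\otimes w$ by the counit axiom of $H$. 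That $\phi$ is a morphism of $H$-Hopf modules is then routine: module compatibility is just associativity of $m_V$, and comodule compatibility reduces, for $w\in Co_V$, to the identity $\delta_V(h\cdot w) = \sum h_{(1)}\otimes h_{(2)}\cdot w$ already established.

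The main obstacle is the verification of $\delta_V(P(v)) = 1\otimes P(v)$: it requires careful Sweedler-index bookkeeping that juxtaposes the Hopf module compatibility, coassociativity of both $\delta_V$ and $\Delta_H$, and the antipode identity in the correct order. It is worth remarking that neither commutativity nor cocommutativity of $H$ is actually used anywhere in this argument, so the theorem in fact holds for an arbitrary Hopf algebra; these hypotheses are invoked only in the subsequent applications of the theorem to $\mathcal M$, where they feed into Milnor--Moore-type structural descriptions.
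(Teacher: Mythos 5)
The paper does not prove this theorem at all: it is quoted as a known result and attributed to Larson--Sweedler \cite{LS}, so there is no in-paper argument to compare against. Your proposal is the standard proof of the fundamental theorem of Hopf modules (construct the coinvariant projection $P(v)=\sum \mathcal S_H(v_{(-1)})\cdot v_{(0)}$ and check that $\phi$ and $\psi$ are mutually inverse), and the computations you outline are correct: the verification of $\phi\circ\psi=id$ and $\psi\circ\phi=id$ and of $P|_{Co_V}=id$ are exactly as you describe. One point deserves to be made explicit in the step you flag as the main obstacle, the proof that $\delta_V(P(v))=1\otimes P(v)$: after applying the Hopf module compatibility you must expand $\Delta_H(\mathcal S_H(v_{(-1)}))$, and for a general Hopf algebra this requires the lemma that the antipode is an anti-morphism of coalgebras, $\Delta_H\circ\mathcal S_H=(\mathcal S_H\otimes\mathcal S_H)\circ s\circ\Delta_H$; under the paper's cocommutativity hypothesis the twist $s$ is invisible and the bookkeeping simplifies. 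With that lemma in hand your closing remark is correct: neither commutativity nor cocommutativity is needed, and the theorem holds for an arbitrary Hopf algebra, which is indeed the form in which it appears in the literature. Your argument is therefore a legitimate, self-contained replacement for the external citation, and slightly more general than the statement as given.
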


An immediate corollary of the Larson-Sweedler theorem is

\begin{cor}
The $\mathcal A$-Hopf module $\mathcal M$ is free. It is generated by a basis of its covariant submodule $Co_\mathcal M$.
\end{cor}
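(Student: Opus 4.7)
The plan is to read off the corollary directly from the Larson--Sweedler theorem by verifying that its hypotheses are met in our setting. Concretely, I will take $H=\mathcal A$ and $V=\mathcal M$, and check that the triple $(\mathcal A, m_\mathcal M, \delta_\mathcal M)$ satisfies the conditions listed in the statement of the theorem. Commutativity and cocommutativity of $\mathcal A$ were recorded right after the definition of the space of chord diagrams, so that hypothesis is already in hand. The Hopf module hypothesis is precisely the content of the theorem proved just above this corollary, so it also requires no extra argument.

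Applying Larson--Sweedler then yields an isomorphism of $\mathcal A$-Hopf modules
\[
m_\mathcal M\big|_{\mathcal A \otimes Co_\mathcal M}\colon \mathcal A \otimes Co_\mathcal M \xrightarrow{\ \sim\ } \mathcal M.
\]
The freeness statement is a direct translation of this isomorphism: if $\{v_\alpha\}$ is any $\mathbb K$-basis of the covariant submodule $Co_\mathcal M$, then the image of $\{1\otimes v_\alpha\}$ under $m_\mathcal M$ is a free $\mathcal A$-basis of $\mathcal M$. I would spell this out in one or two sentences so that the reader sees explicitly how "generated by a basis of $Co_\mathcal M$" drops out of the Larson--Sweedler isomorphism.

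The only thing to flag for the reader is that there is nothing subtle here beyond confirming that our notation and the theorem's notation agree: the module map is $m_\mathcal M$, the comodule map is $\delta_\mathcal M$, the Hopf algebra $\mathcal A$ is indeed commutative and cocommutative, and the Hopf module compatibility has just been established. There is no real obstacle; the proof is essentially a one-line invocation of the cited structure theorem, and the proposal will consist of that invocation together with the explicit passage from the isomorphism to the freeness conclusion.
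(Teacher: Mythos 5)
Your proposal is correct and matches the paper exactly: the paper gives no separate proof, presenting the statement as an immediate corollary of the Larson--Sweedler theorem applied to the $\mathcal A$-Hopf module structure established just above, which is precisely your argument. The only addition you make is spelling out how freeness follows from the isomorphism $\mathcal A\otimes Co_\mathcal M\to\mathcal M$, which is harmless and arguably helpful.
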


 This statement means that every element of $\mathcal M$ is representable as a linear combination of products of the type $cd$, where $c$ is a black diagram, and $d$ is a covariant element.
The space  $Co_\mathcal M$ admits a nice description.

\begin{prop}\label{structure}
The space $Co_\mathcal M$ is spanned by white diagrams.
\end{prop}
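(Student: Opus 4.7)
One direction is immediate. For a white diagram $w$, the set $OC_w$ of oriented chords is empty, so the only admissible subset is $I=\emptyset$, and $\delta_\mathcal M(w) = w_\emptyset \otimes w'_\emptyset = 1\otimes w$. Thus the subspace $W\subseteq \mathcal M$ spanned by white diagrams lies inside $Co_\mathcal M$.

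For the reverse inclusion, my plan is to leverage the Larson-Sweedler theorem established in the corollary above. Under the isomorphism $m_\mathcal M\colon \mathcal A\otimes Co_\mathcal M \xrightarrow{\sim} \mathcal M$, the subspace $\mathcal A\otimes W\subseteq \mathcal A\otimes Co_\mathcal M$ maps isomorphically onto the submodule $\mathcal A\cdot W\subseteq \mathcal M$. Hence it suffices to prove the key claim that $\mathcal A\cdot W = \mathcal M$, i.e., that every framed chord diagram equals, modulo 4T, a linear combination of products $c\cdot w$ with $c$ black and $w$ white. Indeed, once this is established, injectivity of the Larson-Sweedler map forces $\mathcal A\otimes W = \mathcal A\otimes Co_\mathcal M$ as subspaces of $\mathcal A\otimes Co_\mathcal M$, and applying $\epsilon_\mathcal A\otimes \mathrm{id}$ to both sides then yields $W = Co_\mathcal M$.

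The substantive part of the proof is therefore the key claim. Geometrically, a framed diagram equals a product $c\cdot w$ precisely when one can mark two points on the circle whose removal separates the endpoints into one arc containing only oriented endpoints and one arc containing only disorienting endpoints. I would induct on a measure of how far a diagram is from being separated in this sense---for instance, the number of cyclically adjacent endpoint pairs having mismatched framings---using the 4T-relations of type (b), which are exactly the relations mixing an oriented chord with a disorienting one, to transpose adjacent mismatched endpoints at the cost of correction terms of lower complexity. The cut-point independence of the product guaranteed by the first proposition of the section will then allow the fully separated representative to be recognized unambiguously as $c\cdot w$.

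The principal obstacle is controlling those correction terms. Unlike 4T(a), the 4T(b) relations can alter the framings of individual chords (the author's own remark in the proof of the first proposition, that ``it is enough to use only the relations that do not change the number of oriented chords,'' implicitly signals that other 4T relations do). The induction parameter therefore has to be chosen delicately enough that it strictly decreases under \emph{every} term produced by a single application of a 4T(b) relation, not merely on the ``main'' term one was hoping to simplify. Making this choice---and checking case by case against each instance of 4T(b)---is the step where I expect the work to lie.
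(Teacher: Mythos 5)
Your framework is sound and coincides with the paper's: the easy inclusion of the span $W$ of white diagrams into $Co_\mathcal M$ is argued the same way, and your reduction of the hard inclusion, via the Larson--Sweedler isomorphism, to the claim $\mathcal A\cdot W=\mathcal M$ is exactly the logic the paper relies on (the paper leaves that reduction implicit and proceeds directly to showing that every framed diagram is a linear combination of products of black by white diagrams). However, the substantive combinatorial step --- exhibiting an induction parameter that strictly decreases on \emph{every} term produced by a single 4T(b) move --- is precisely what you defer, and it is the entire content of the paper's proof. Your candidate measure (the number of cyclically adjacent endpoint pairs with mismatched framings) will not work as stated: the two correction terms of a 4T(b) relation turn an oriented chord into a disorienting one, which can increase the number of mismatched adjacencies elsewhere on the circle, so this parameter need not decrease on those terms. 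As written, your argument is a plan whose key lemma remains unproved.

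The paper's resolution of exactly this difficulty is the following. Fix a base point $p$ on the circle and, for each endpoint $h$ of an oriented chord, let $r_p(h)$ be the number of disorienting-chord endpoints met while travelling from $h$ to $p$ along the orientation; set $C_p(d)=5^b\frac{(b-1)!}{w!}\sum_h r_p(h)$, where $b$ and $w$ are the numbers of oriented and disorienting chords. Applying a 4T(b) move at the first oriented endpoint of positive remoteness writes $d$ as a combination of one diagram with the same chord counts and $\sum_h r_p(h)$ reduced by one, plus two diagrams with $b-1$ oriented and $w+1$ disorienting chords. The prefactor $5^b\frac{(b-1)!}{w!}$ is rigged so that the \emph{largest} possible complexity with parameters $(b-1,w+1)$, namely $4(b-1)(w+1)\cdot 5^{b-1}\frac{(b-2)!}{(w+1)!}=4\cdot 5^{b-1}\frac{(b-1)!}{w!}$, lies strictly below the \emph{smallest} nonzero complexity $5^b\frac{(b-1)!}{w!}$ with parameters $(b,w)$; hence all three resulting terms have strictly smaller complexity, the induction closes, and it terminates at complexity zero, where the diagram is visibly a product of a black by a white diagram. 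In effect this is a lexicographic induction on the pair (number of oriented chords, total remoteness) packaged into a single number --- the ``delicate choice'' you correctly anticipated but did not supply.
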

\begin{proof}
It is clear that the subspace spanned by white diagrams is a subspace of $Co_\mathcal M$.

Now we are going to show that every chord diagram modulo 4T-relations is representable as a linear combination 
of products of black diagrams by white diagrams.

Take an arbitrary framed chord diagram $d$. If all its chords have the same framing, then there is nothing to prove.

Now suppose that the diagram contains chords of both possible framings. Choose a point $p$ different from the chords' endpoints on the supporting circle. Let $h$ be an endpoint of an oriented chord. By the \emph{remoteness} $r_p(h)$ of $h$ with respect to $p$ we mean the number of endpoints of disoriented chords one meets while traveling along the circle with respect to its orientation from $h$ to $p$. The number
$$ C_p(d) = 5^b \frac{(b-1)!}{w!} \sum_h r_p(h),$$
where $b$ and $w$ denote the number of oriented and disorienting chords, respectively, is called the \emph{complexity} of $d$ with respect to $p$.

Note that if the complexity equals 0, then $d$ obviously is representable as a product of a black and a white diagram. The minimal possible non-zero complexity of a diagram with $b$ oriented chords and $w$ disorienting chords is $5^b\frac{(b-1)!}{w!}$.

Travelling along the circle starting from $p$ in a clockwise direction, consider the first encountered oriented chord endpoint with a non-zero remoteness. Apply the 4T-relation, as shown in Figure~\ref{4T}~(b), trying to reduce the remoteness of the oriented chord endpoint under consideration. In such a way we present $d$ as a linear combination of three diagrams, with the basis point $p$ inherited to all three. One of these diagrams differs from $d$ by the position of one of the oriented chord endpoints only, and its complexity is clearly smaller than that of $d$.

The number of oriented chords in each of the two other diagrams is one less than in $d$. If they do not contain oriented chords at all, then we are done. Otherwise, we take into account that the largest possible complexity of a diagram with 
$b-1$ oriented chords and $w+1$ disorienting chords is
$$ 4(b-1)(w+1)\cdot 5^{b-1}\frac{(b-2)!}{(w+1)!} = 4\cdot 5^{b-1}\frac{(b-1)!}{w!}$$
which is evidently smaller than  $5^b\frac{(b-1)!}{w!}$.
\end{proof}

Another corollary of the Larson--Sweedler theorem is
\begin{cor}
The space $\mathcal M$ admits a bigrading.
\end{cor}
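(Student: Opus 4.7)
The plan is to derive this corollary essentially for free from the Larson--Sweedler theorem combined with Proposition~\ref{structure}. The Hopf algebra $\mathcal A$ is already graded by the number of chords, and the space $\mathcal M$ is graded by the total number of chords, as noted earlier in the text. Since the coaction $\delta_\mathcal M$ is homogeneous with respect to the total chord count (each summand $d_I\otimes d'_I$ in its definition satisfies $|I|+|I^c|=n$), the covariant submodule $Co_\mathcal M$ is a graded subspace of $\mathcal M$; by Proposition~\ref{structure}, its degree-$w$ component is spanned by white diagrams with $w$ disorienting chords.

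Next, I would invoke the Larson--Sweedler theorem to obtain the Hopf module isomorphism
$$m_\mathcal M\big|_{\mathcal A\otimes Co_\mathcal M} \colon \mathcal A\otimes Co_\mathcal M \xrightarrow{\sim} \mathcal M.$$
The left-hand side carries an evident bigrading whose $(b,w)$-summand is $\mathcal A^b\otimes Co_\mathcal M^w$, and transporting this bigrading through the isomorphism endows $\mathcal M$ with a bigrading. Concretely, the $(b,w)$-component of $\mathcal M$ is spanned by products $cd$ where $c$ is a black diagram on $b$ chords and $d$ is a covariant element of chord-degree $w$; the first grading counts oriented chords while the second counts disorienting ones.

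To finish, I would verify consistency with the original $\mathbb Z$-grading: multiplying a black diagram on $b$ chords by a white diagram on $w$ chords yields a framed diagram on $b+w$ chords, so the total grading decomposes as the sum of the two new ones. The only point requiring any attention is the graded-subspace claim for $Co_\mathcal M$, and that follows immediately from the homogeneity of $\delta_\mathcal M$; beyond that, the corollary is purely formal, so I do not expect a serious obstacle here.
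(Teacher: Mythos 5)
Your proposal is correct and follows essentially the same route as the paper: both invoke the Larson--Sweedler isomorphism $\mathcal A\otimes Co_\mathcal M\cong\mathcal M$ and observe that each tensor factor is graded, then transport the resulting bigrading. The only cosmetic difference is that you justify the gradedness of $Co_\mathcal M$ via the homogeneity of $\delta_\mathcal M$, while the paper cites the homogeneity of the framed 4T-relations; both justifications are valid.
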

\begin{proof}
Indeed, by the Larson-Sweedler theorem, $\mathcal M$ is a tensor product of the graded algebra $\mathcal A$ and the vector space spanned by white diagrams. The latter is also graded since the framed 4T-relations are homogeneous with respect to the number of chords. The assertion follows.
\end{proof}

\emph{Remark:} Proposition~\ref{structure} means that the symbol of any finite-type J-invariant is a linear combination of products of symbols of  J$^+$-invariants by symbols of J$^-$-invariants.\vspace{0.3cm}

The space $Co_\mathcal M$ has non-trivial relations among the generating white diagrams. Up to degree 4, there are no relations, and the first example of a non-trivial relation in degree 4 was found by S.~Duzhin. Examples of relations in $Co_\mathcal M$ are shown in Figure~\ref{white}. Unfortunately, a complete description of the set of relations in $Co_\mathcal M$ is not known.

\begin{figure}[h]
\begin{center}
\includegraphics[width=1cm]{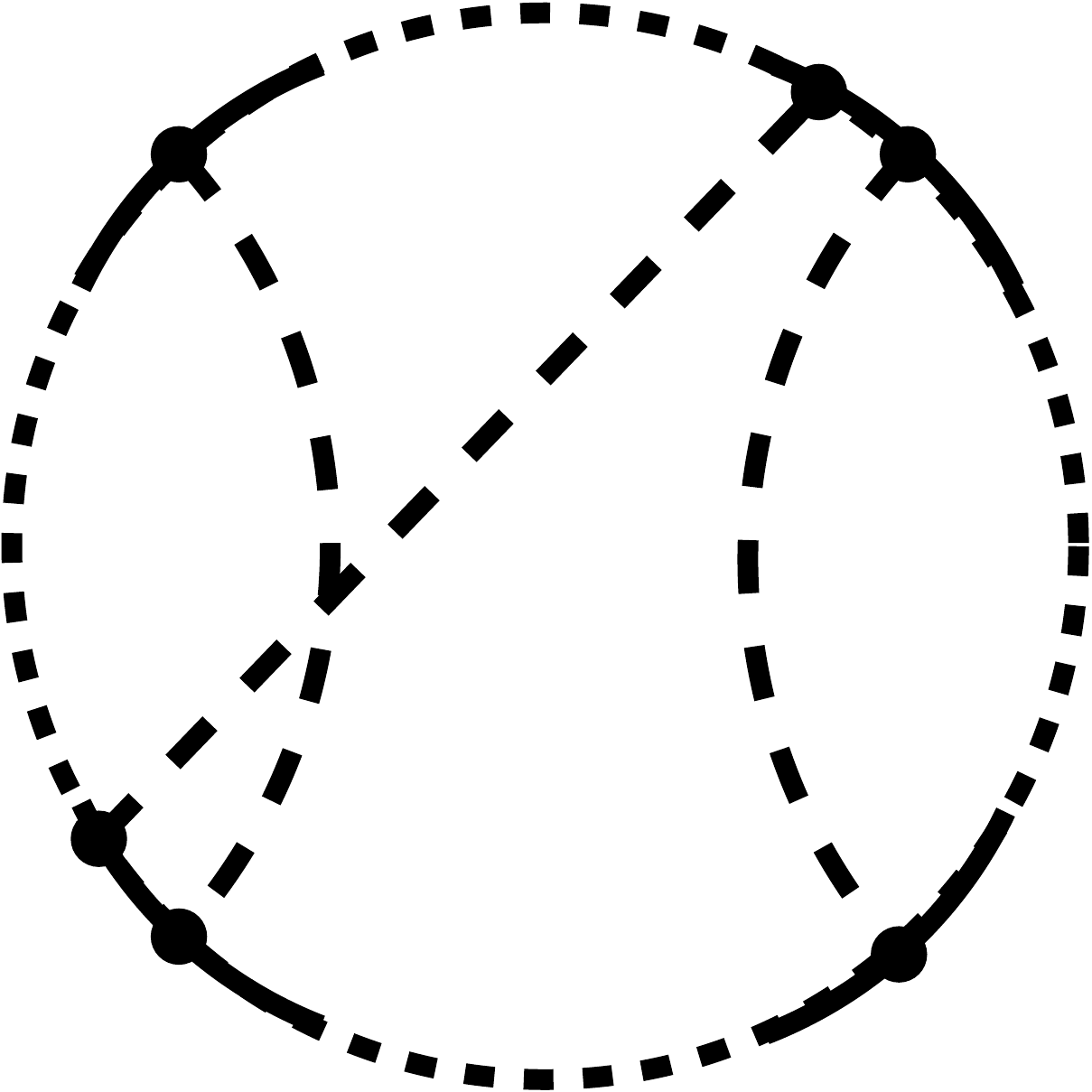} \raisebox{0.4cm}{--} \includegraphics[width=1cm]{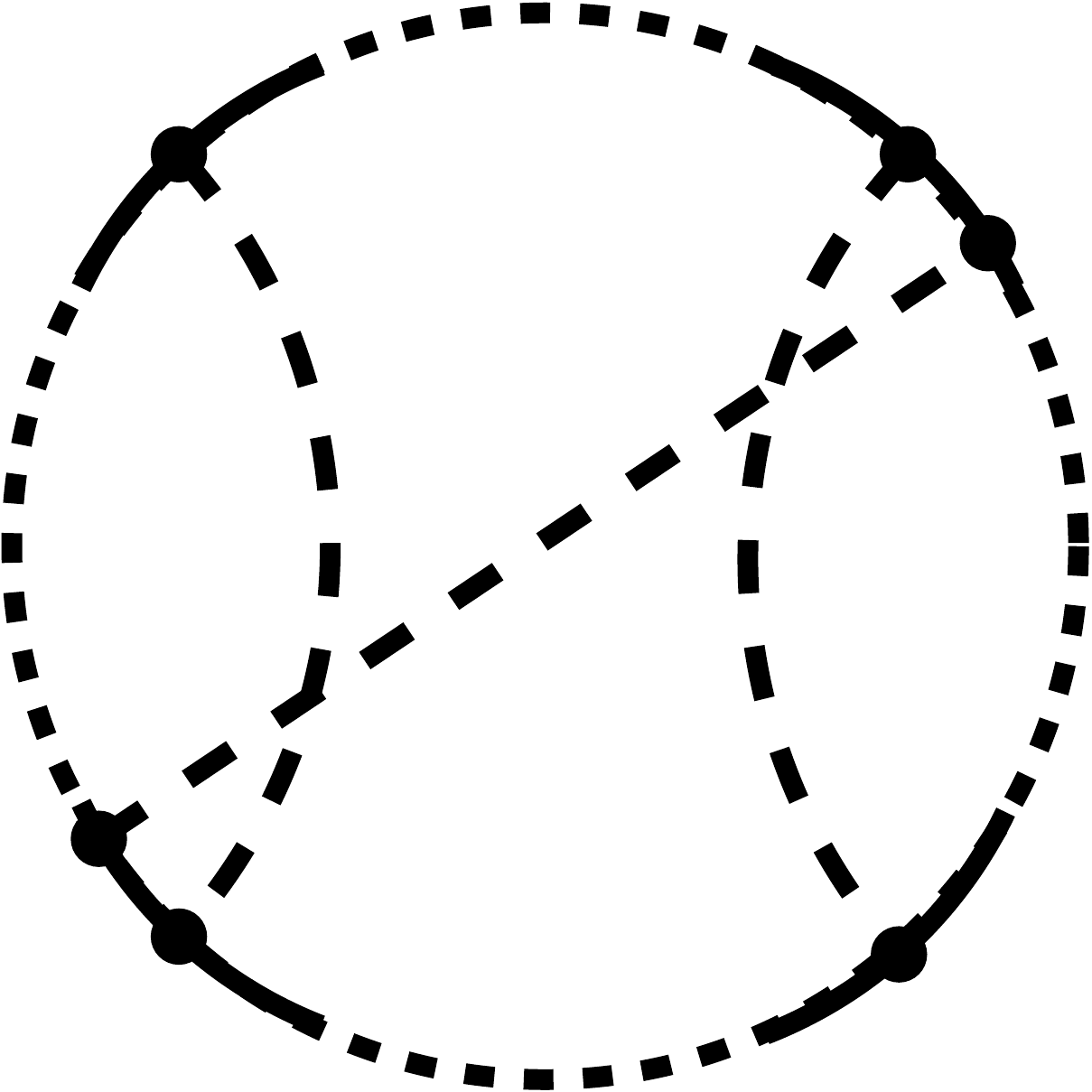} \raisebox{0.4cm}{+} \includegraphics[width=1cm]{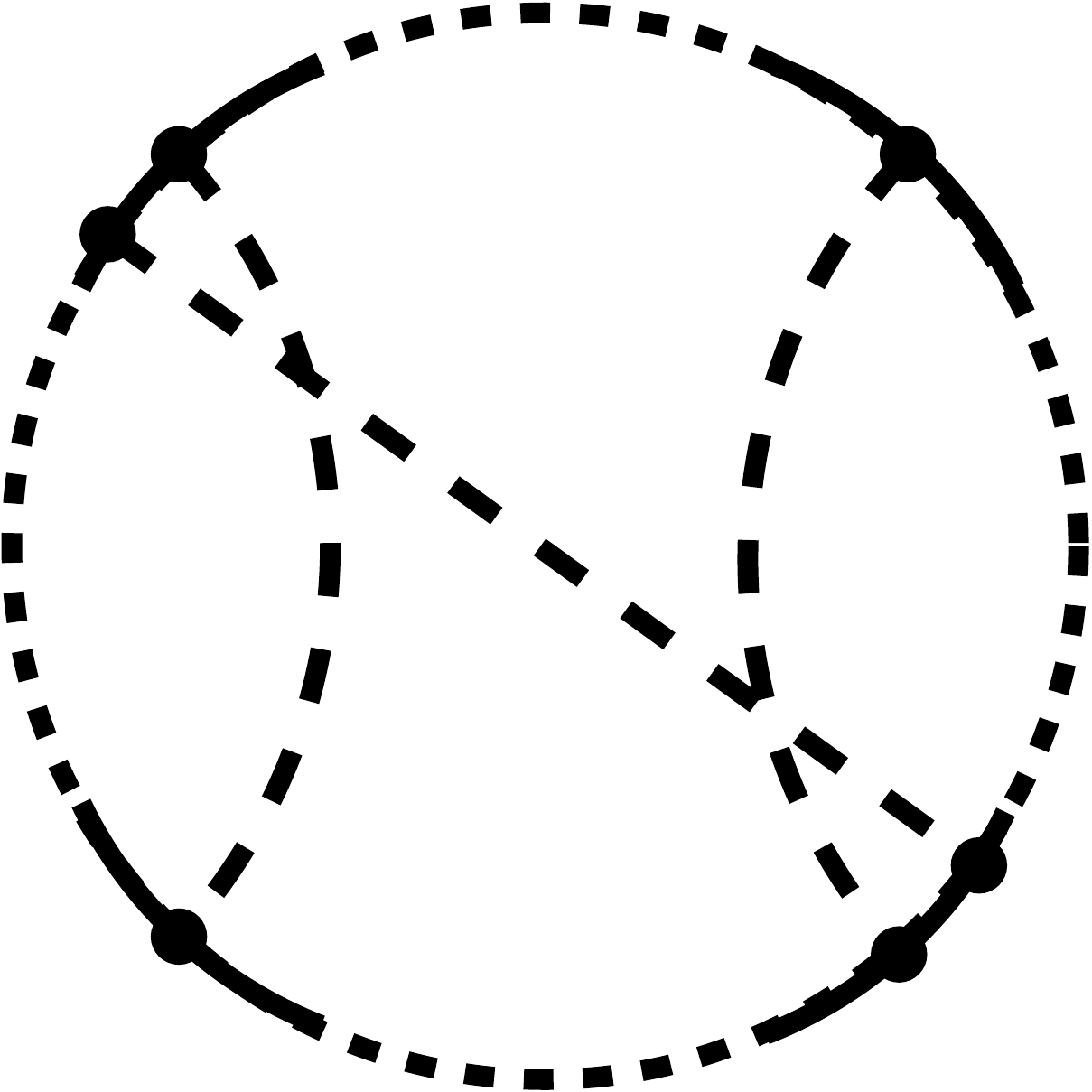} \raisebox{0.4cm}{--} \includegraphics[width=1cm]{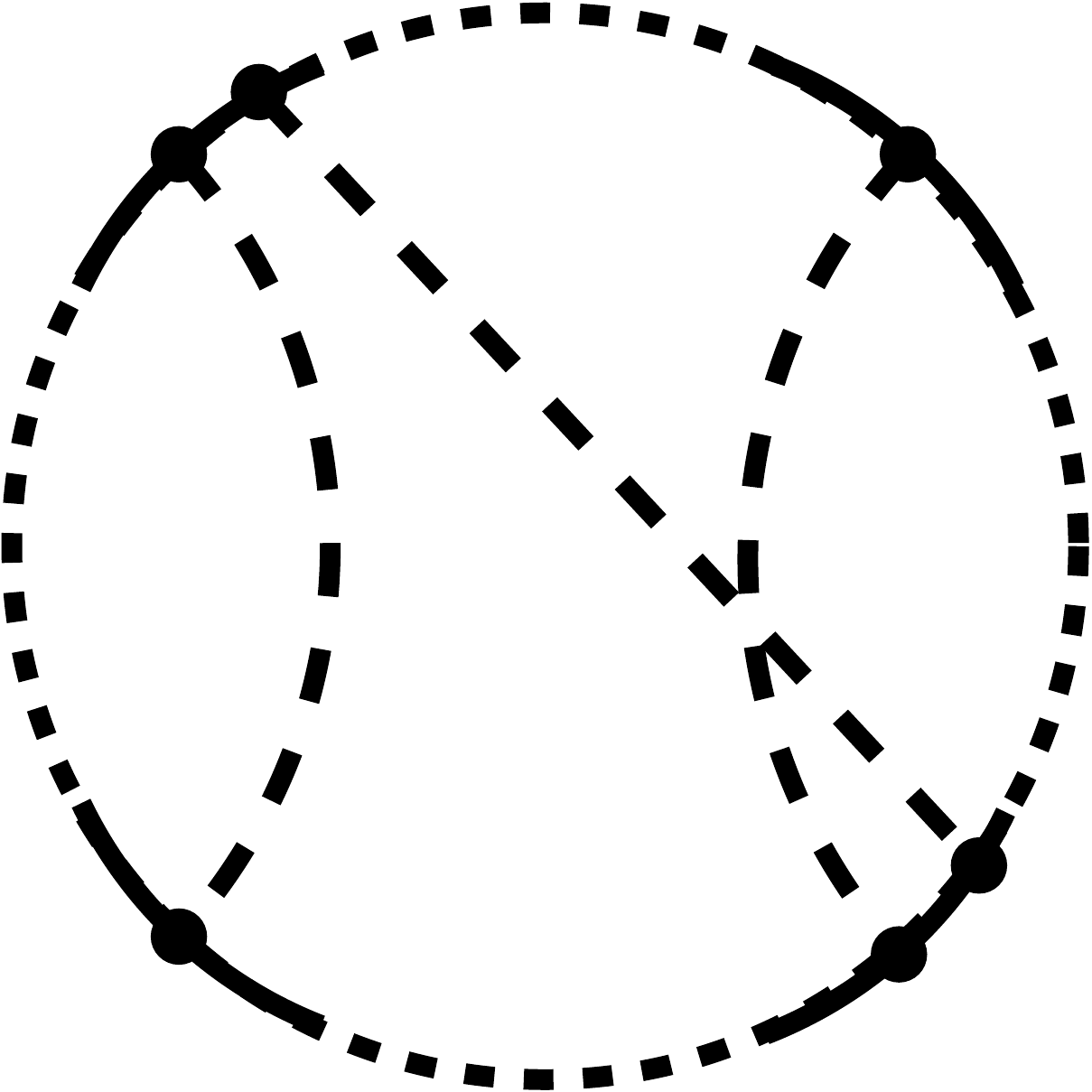} \raisebox{0.4cm}{+} \includegraphics[width=1cm]{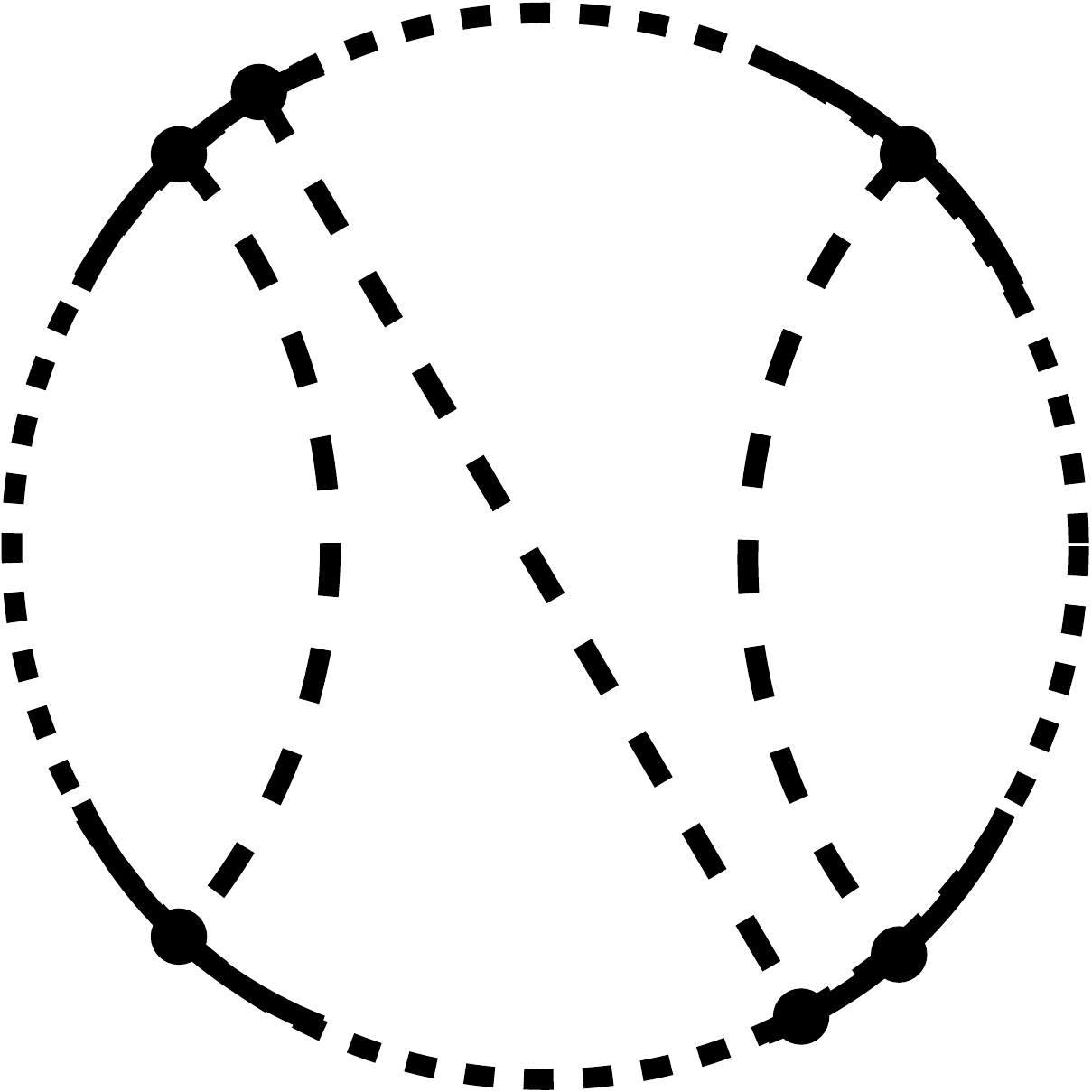}
\raisebox{0.4cm}{--} \includegraphics[width=1cm]{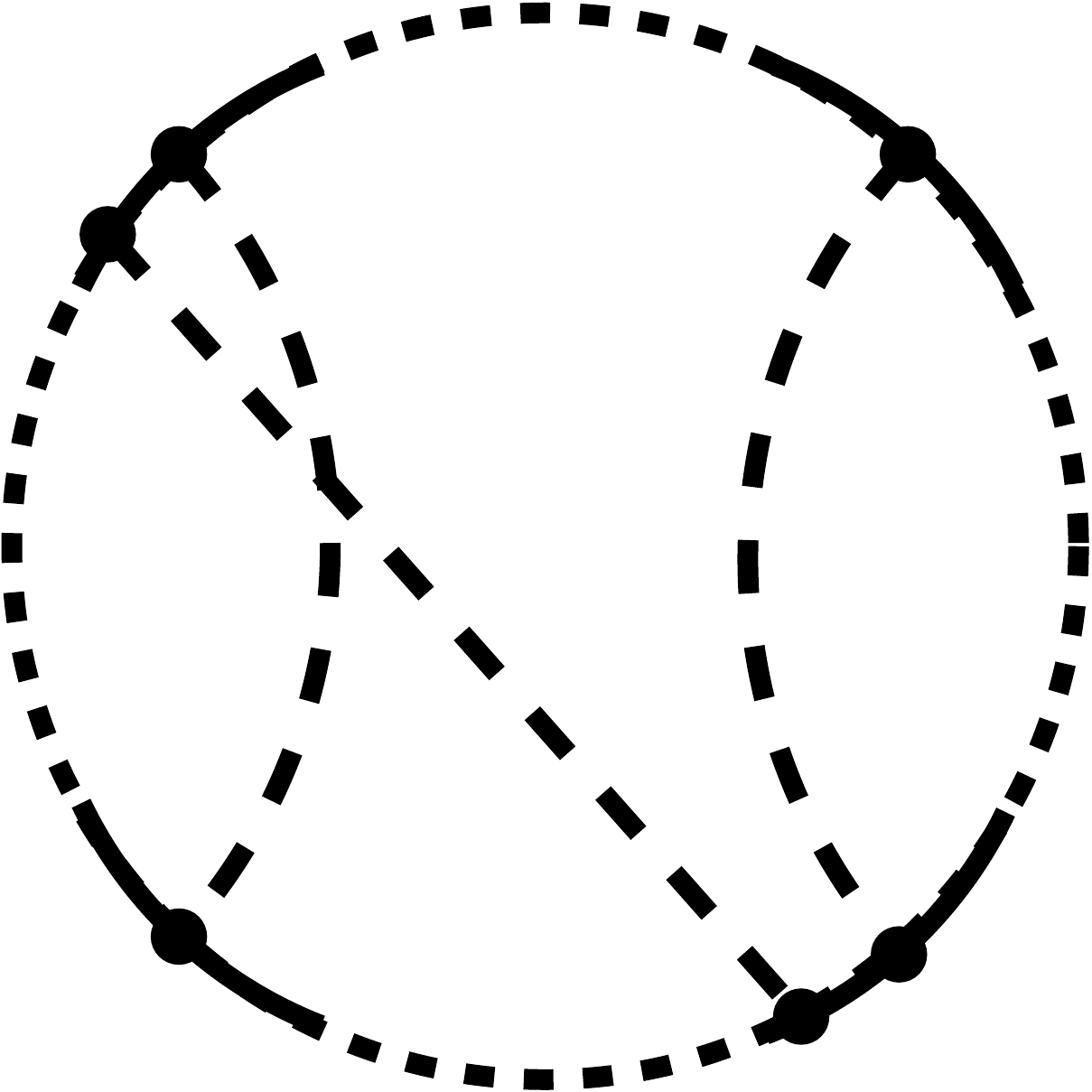}
\raisebox{0.4cm}{+} \includegraphics[width=1cm]{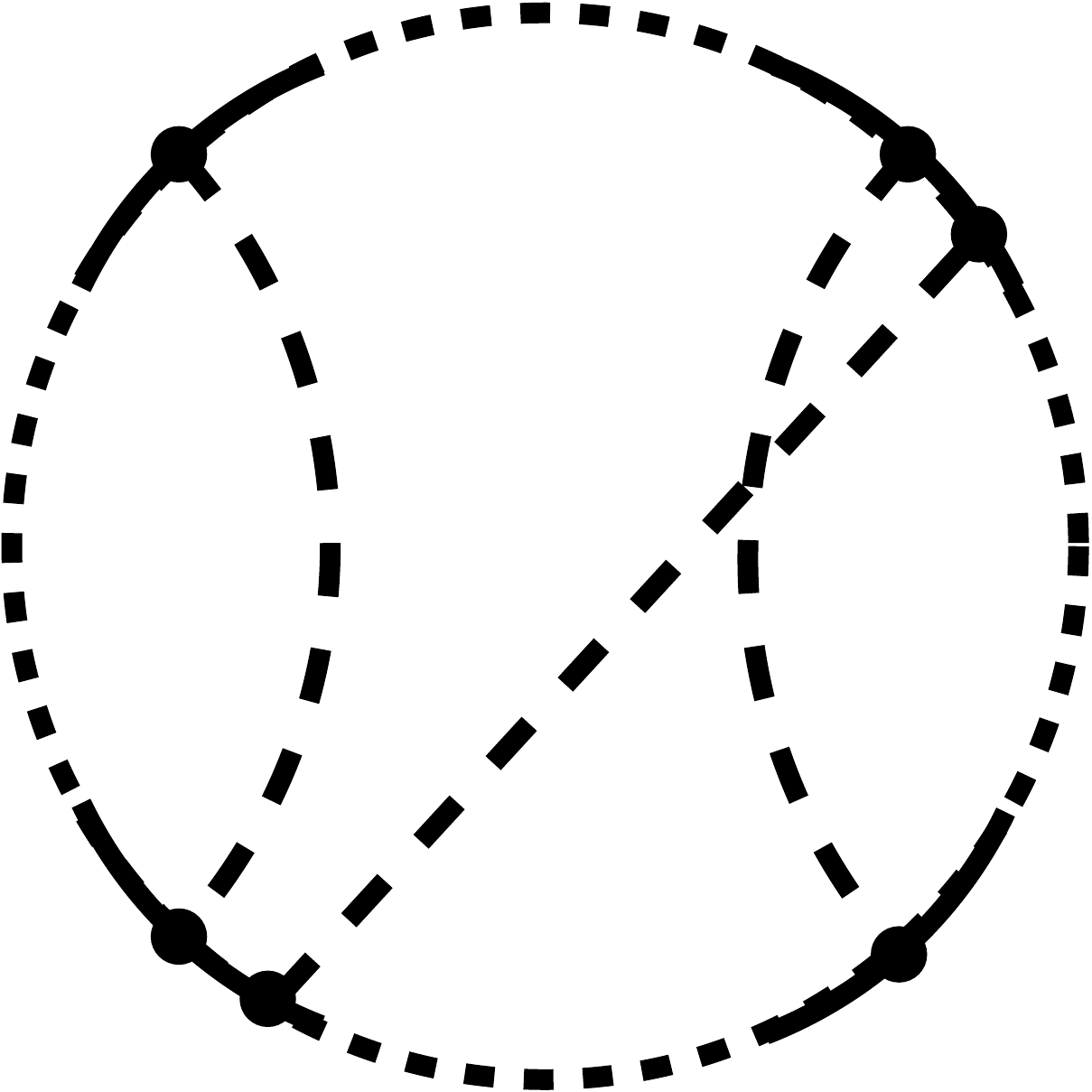}
\raisebox{0.4cm}{--} \includegraphics[width=1cm]{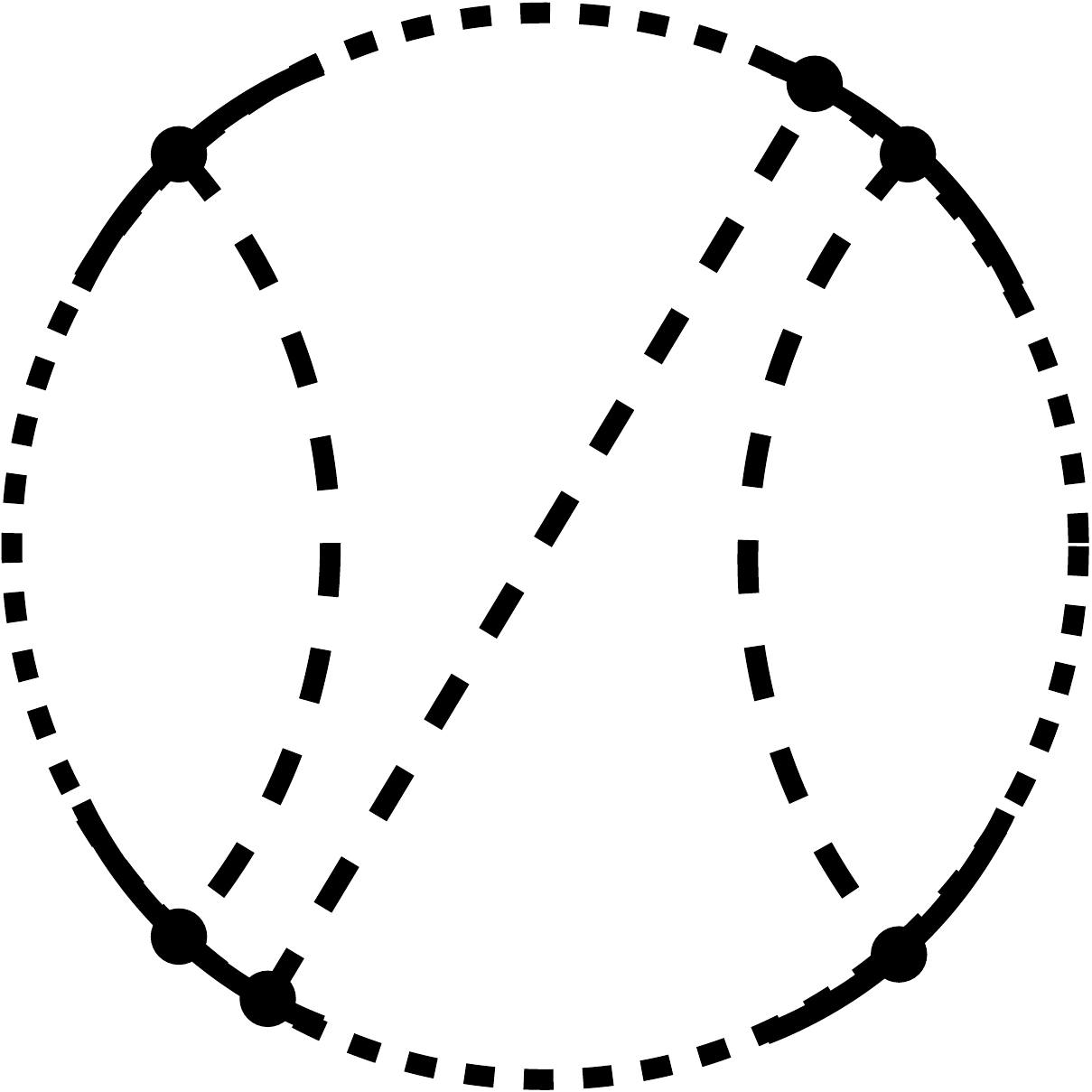}
 \raisebox{0.4cm}{=0}
\vspace{0.4cm}

 \includegraphics[width=1cm]{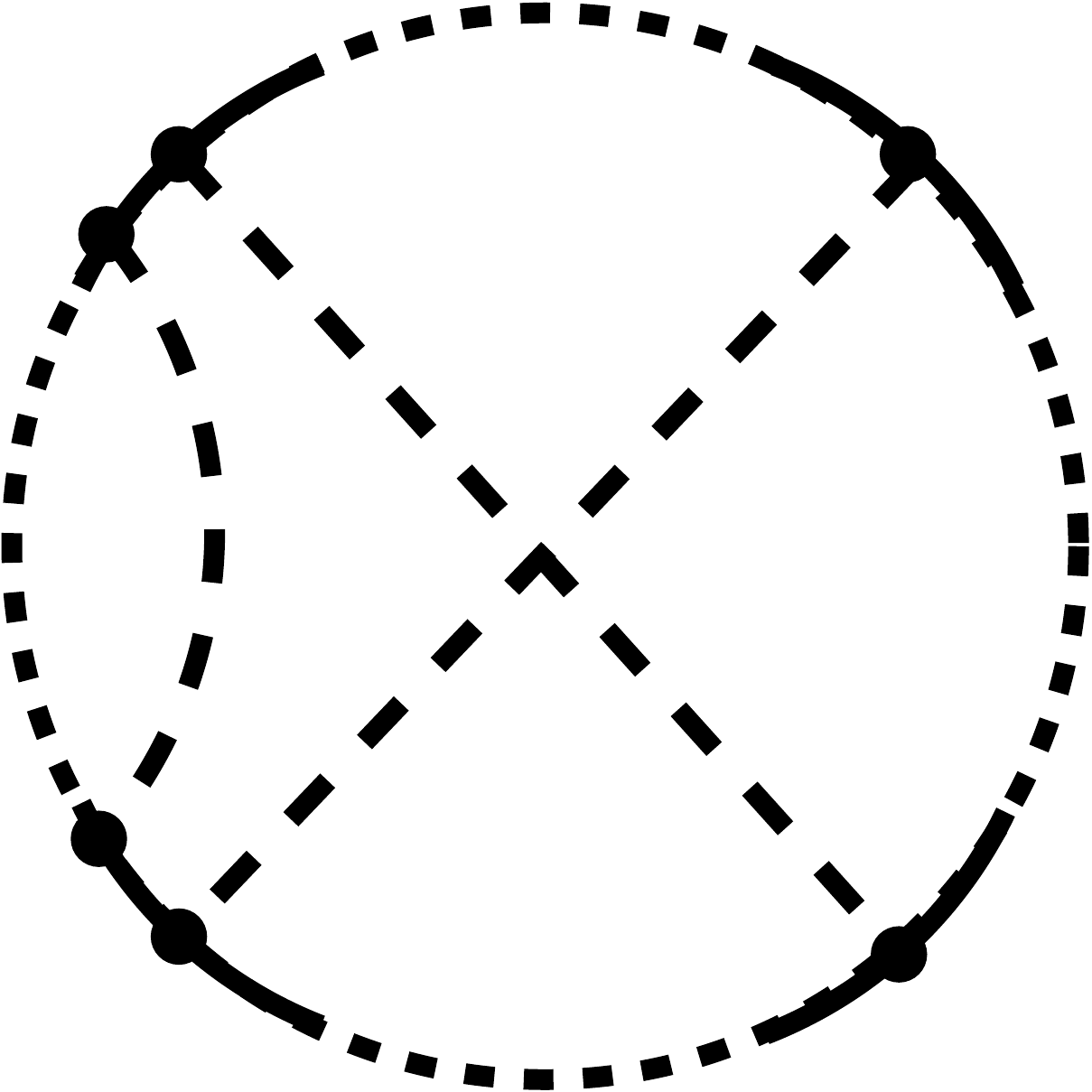} \raisebox{0.4cm}{--} \includegraphics[width=1cm]{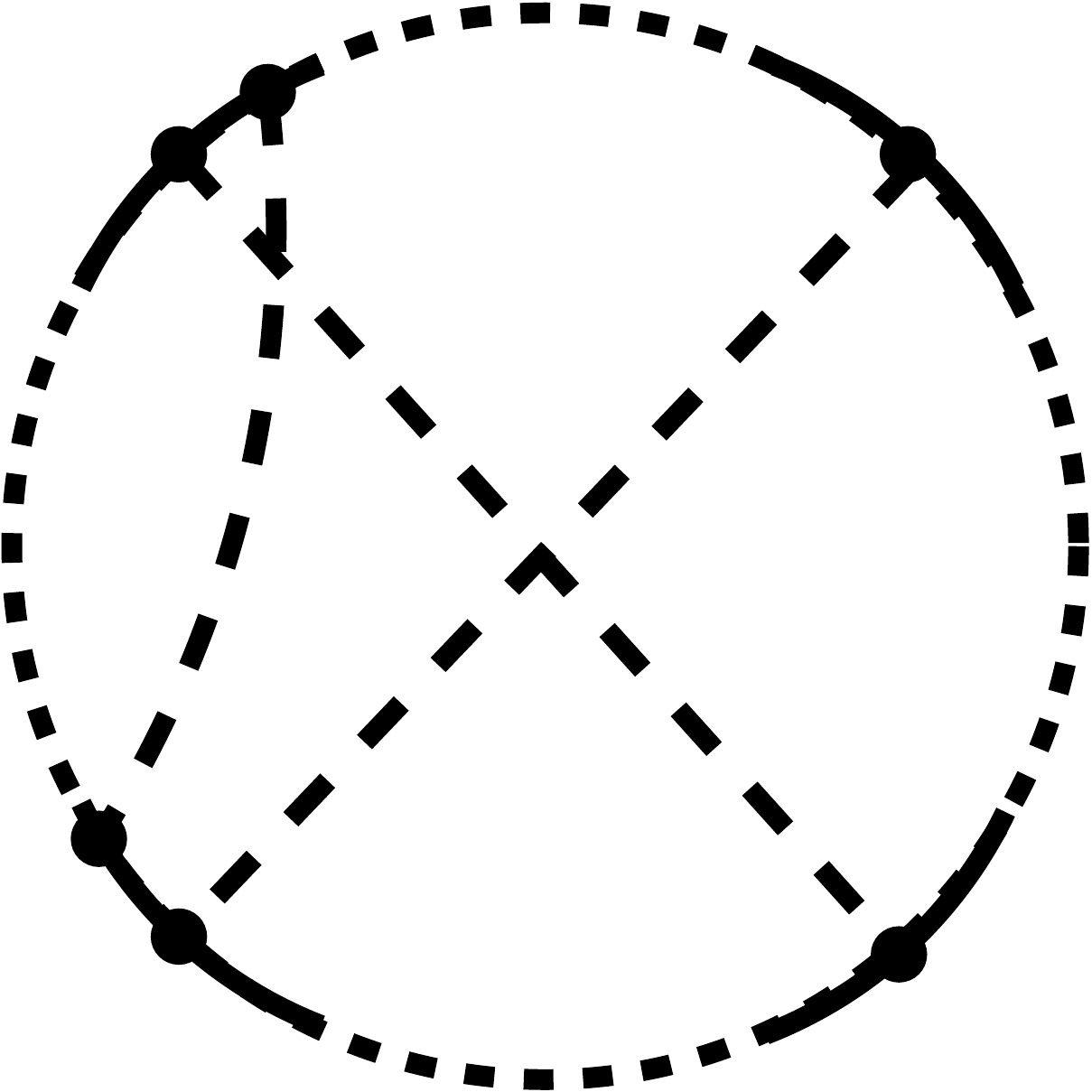} \raisebox{0.4cm}{+} \includegraphics[width=1cm]{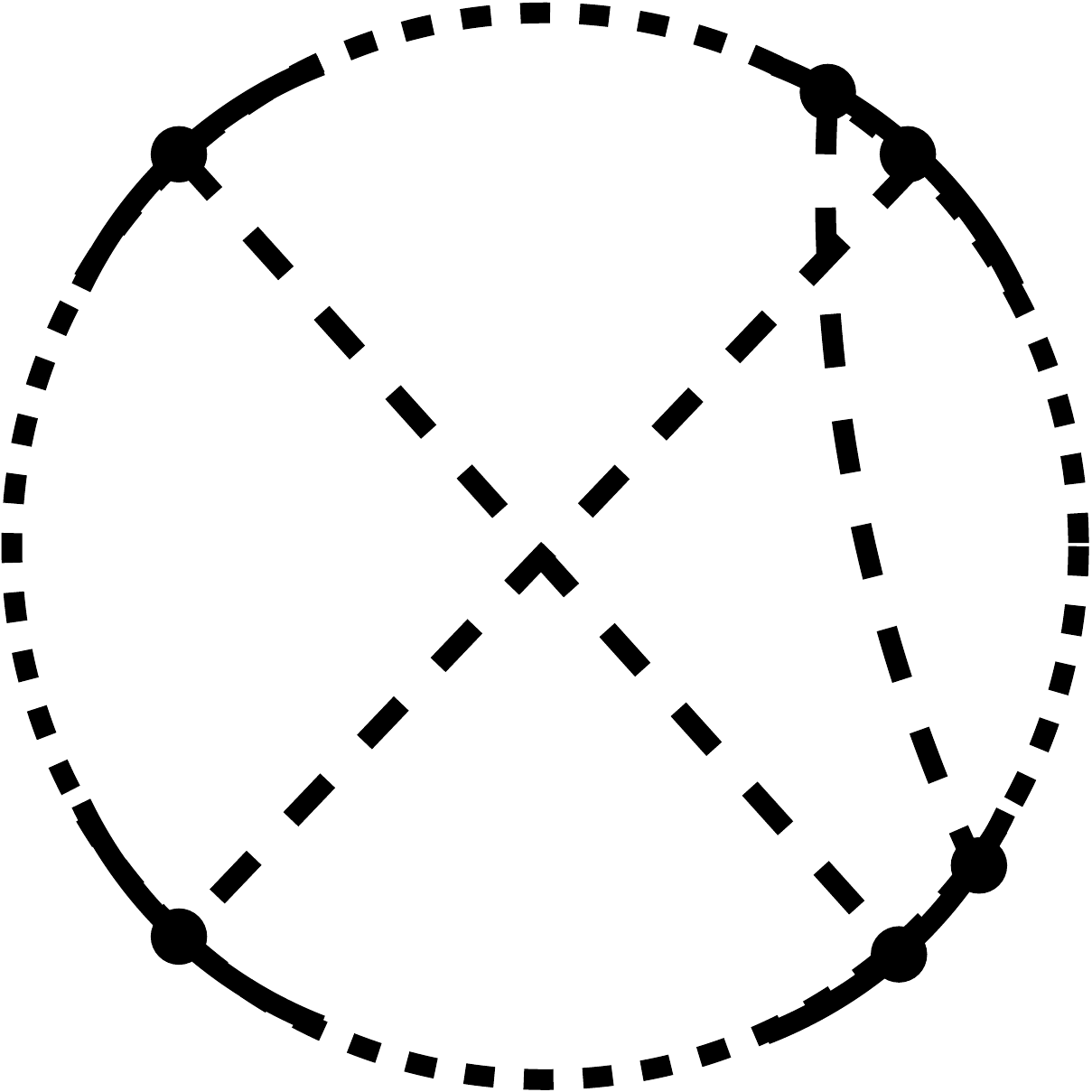} \raisebox{0.4cm}{--} \includegraphics[width=1cm]{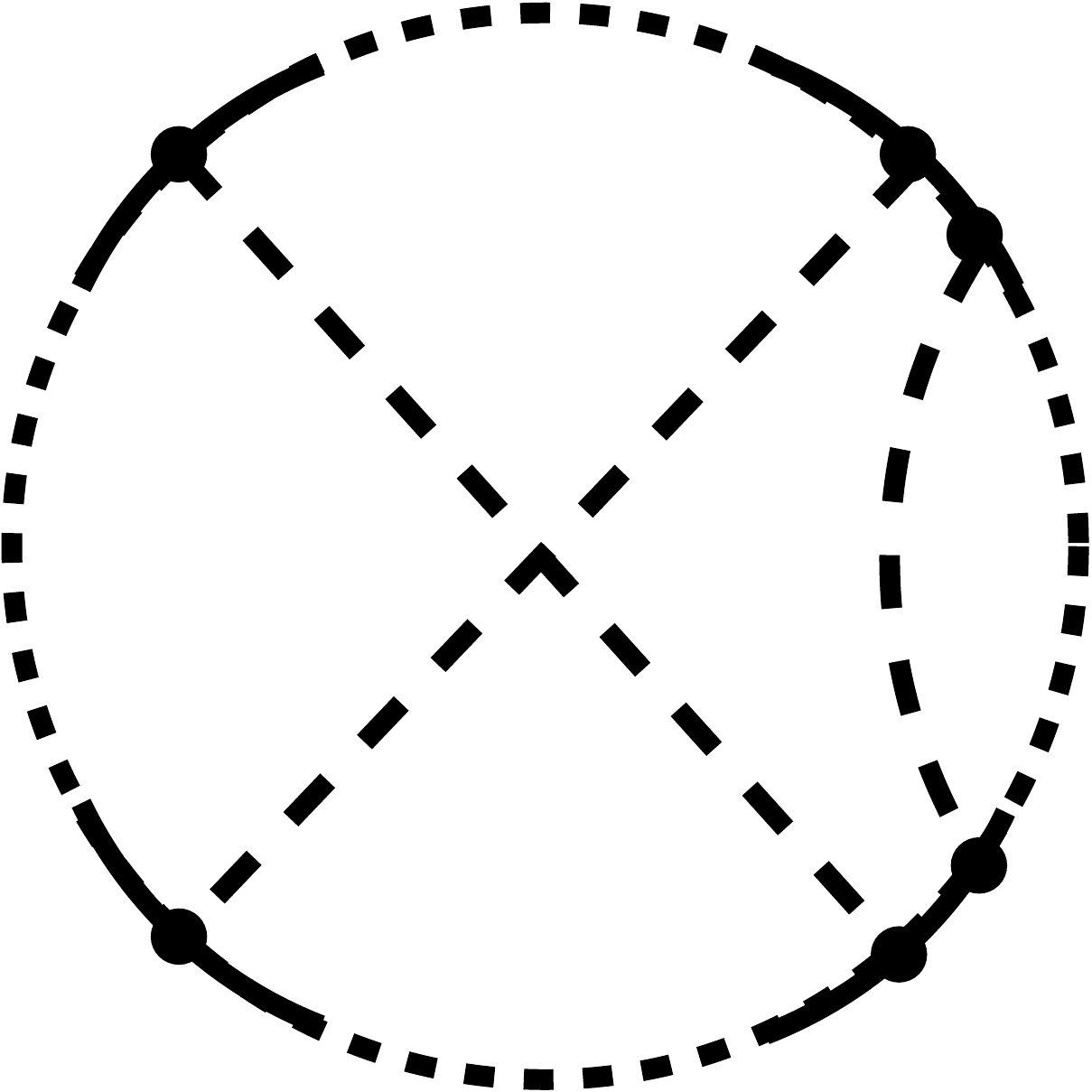} \raisebox{0.4cm}{+} \includegraphics[width=1cm]{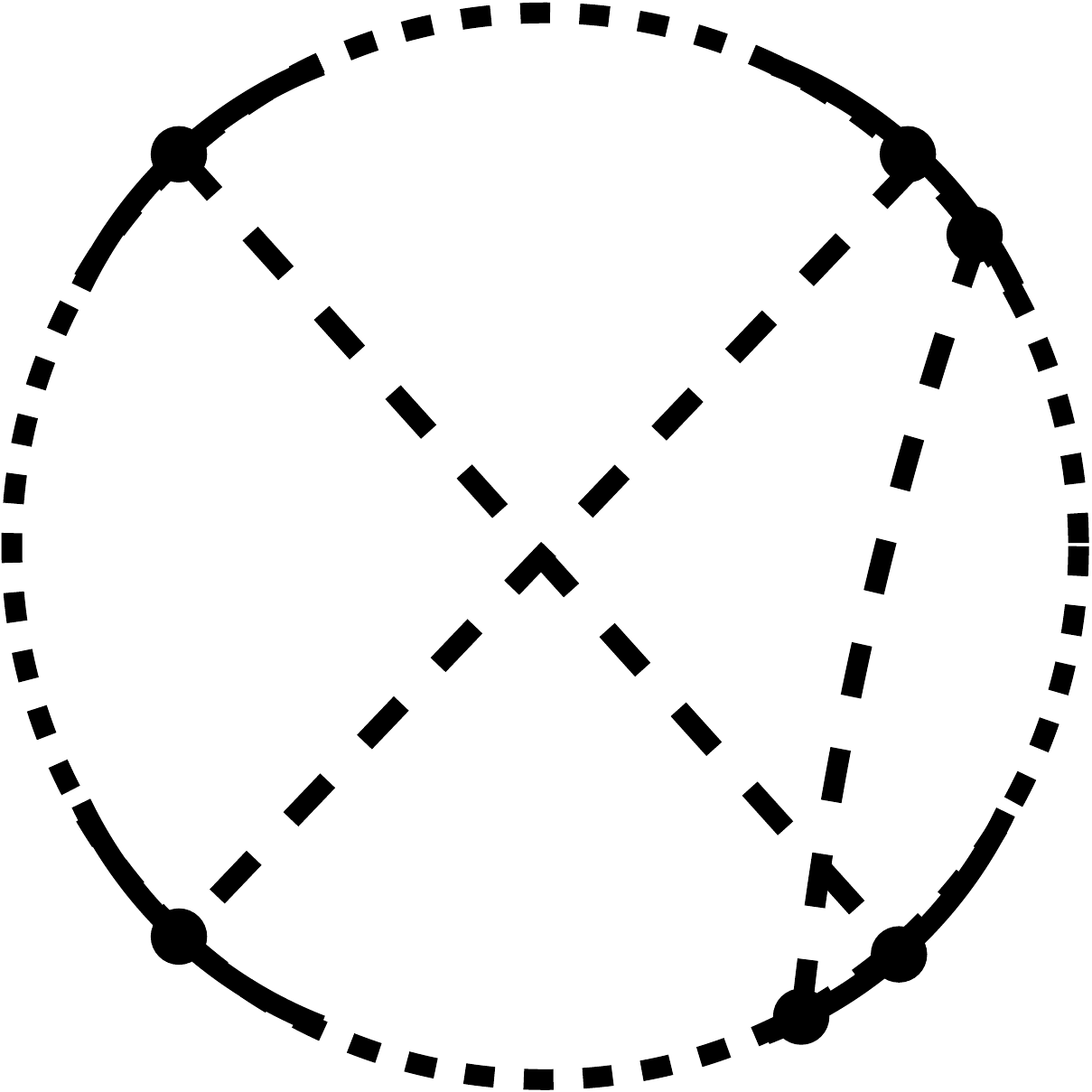}
\raisebox{0.4cm}{--} \includegraphics[width=1cm]{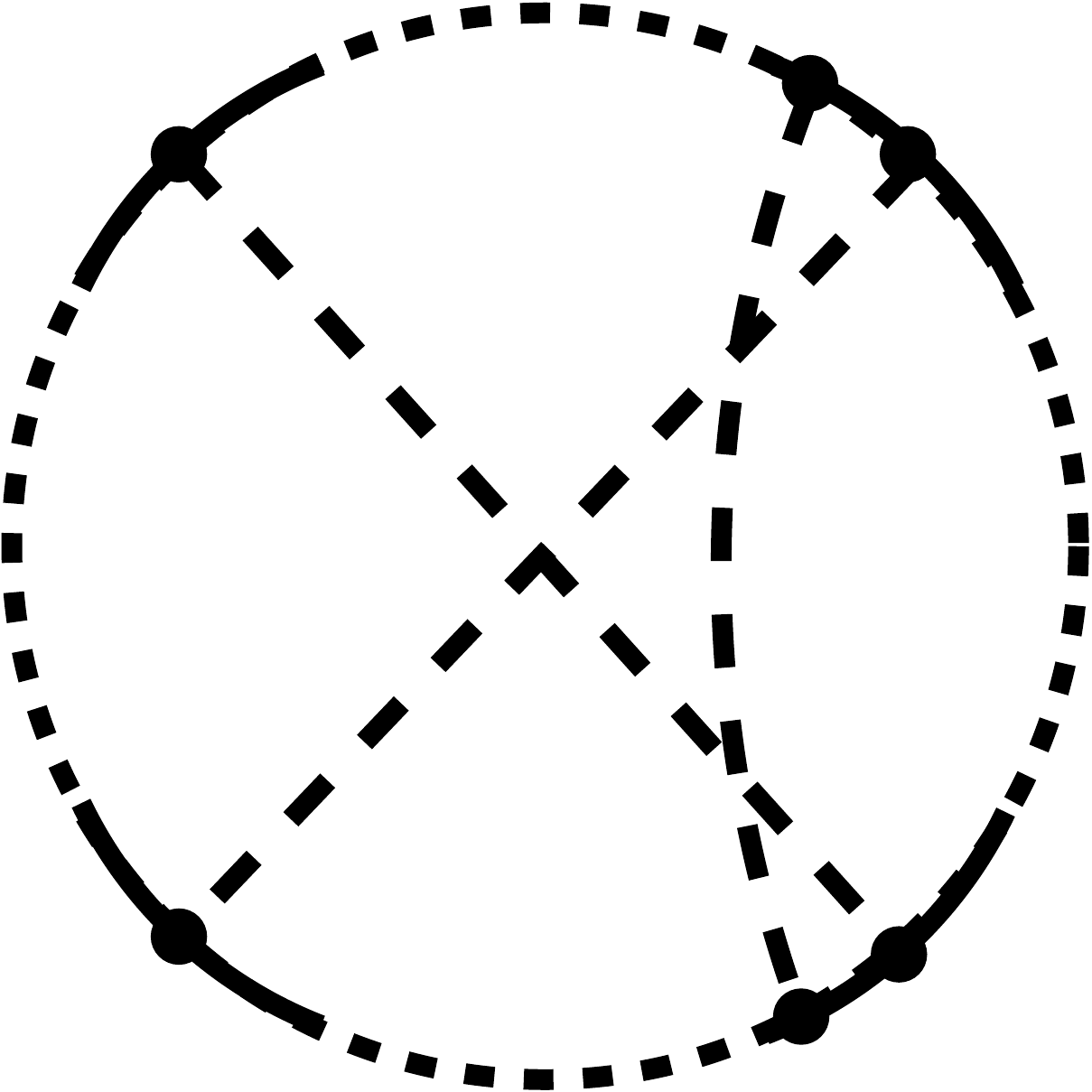}
\raisebox{0.4cm}{+} \includegraphics[width=1cm]{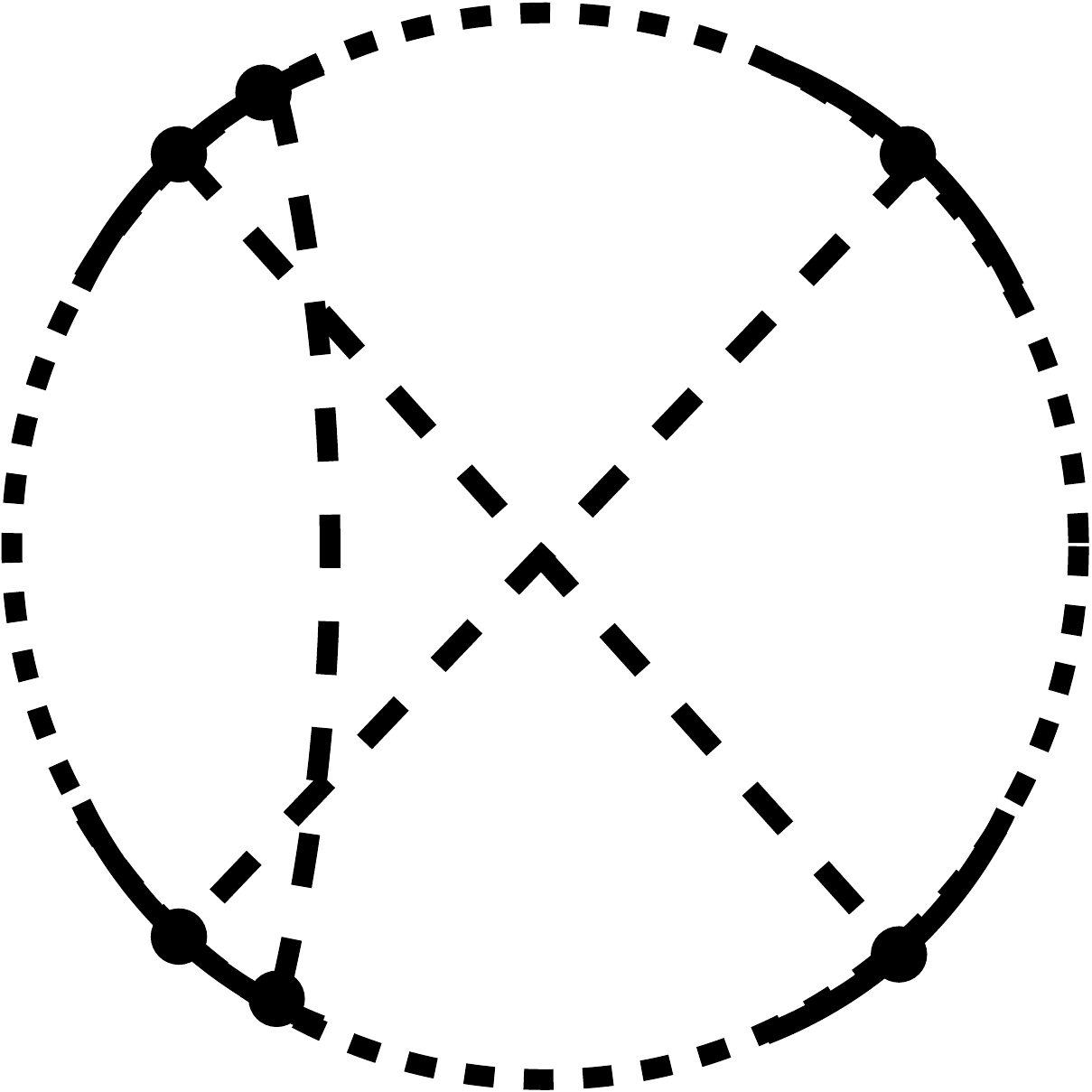}
\raisebox{0.4cm}{--} \includegraphics[width=1cm]{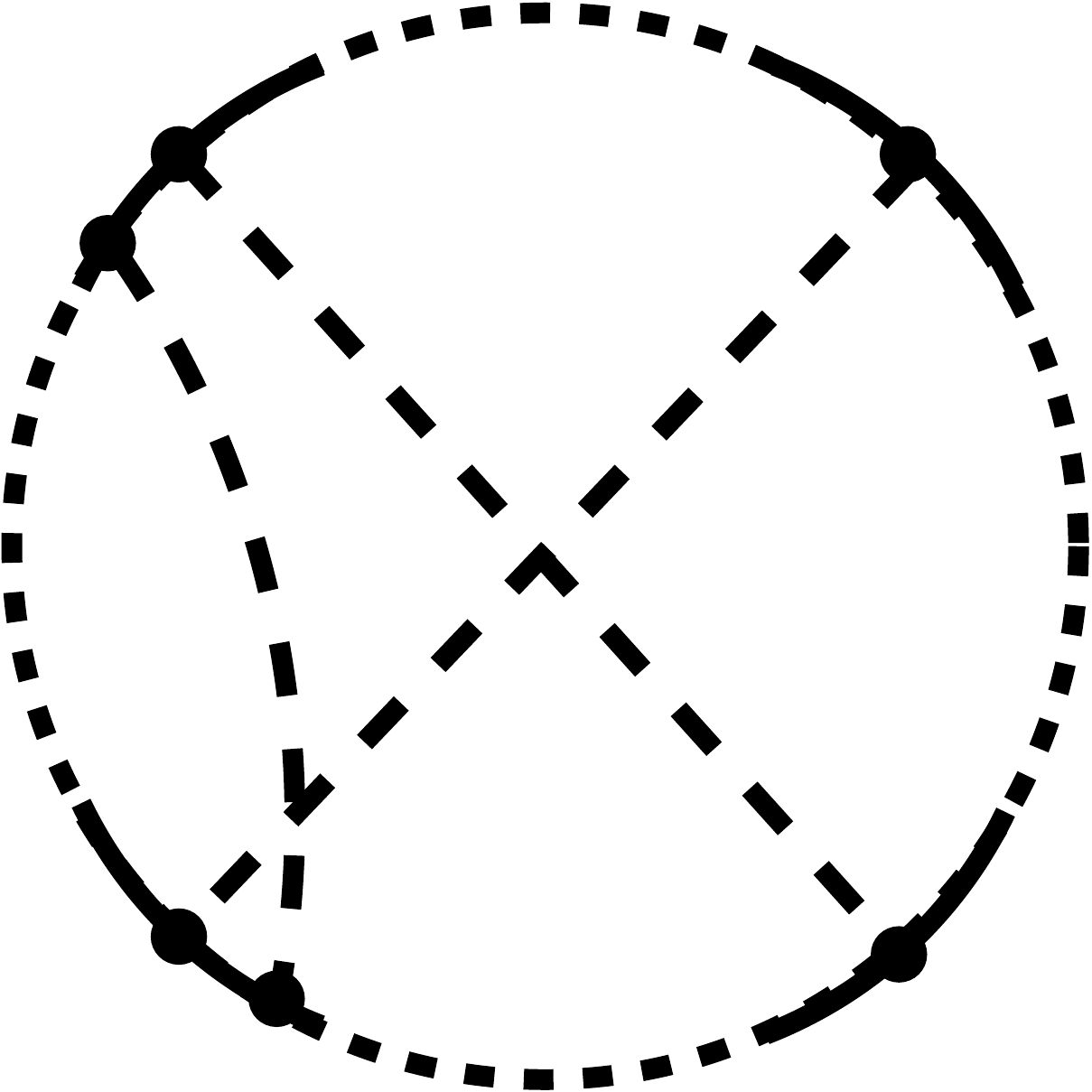}
 \raisebox{0.4cm}{=0}
\vspace{0.4cm}

\raisebox{0.4cm}{2} \includegraphics[width=1cm]{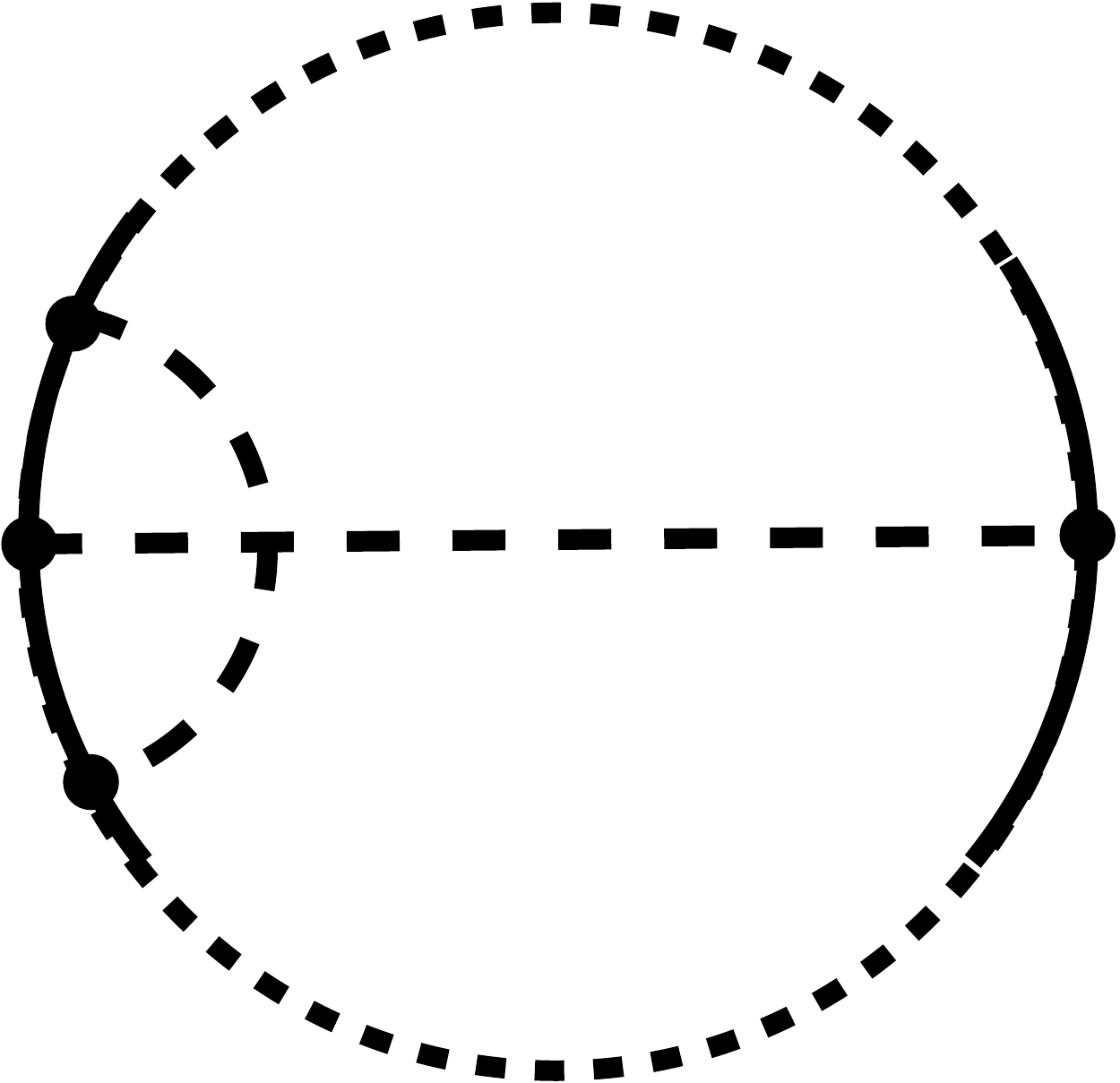} \raisebox{0.4cm}{-- 2} \includegraphics[width=1cm]{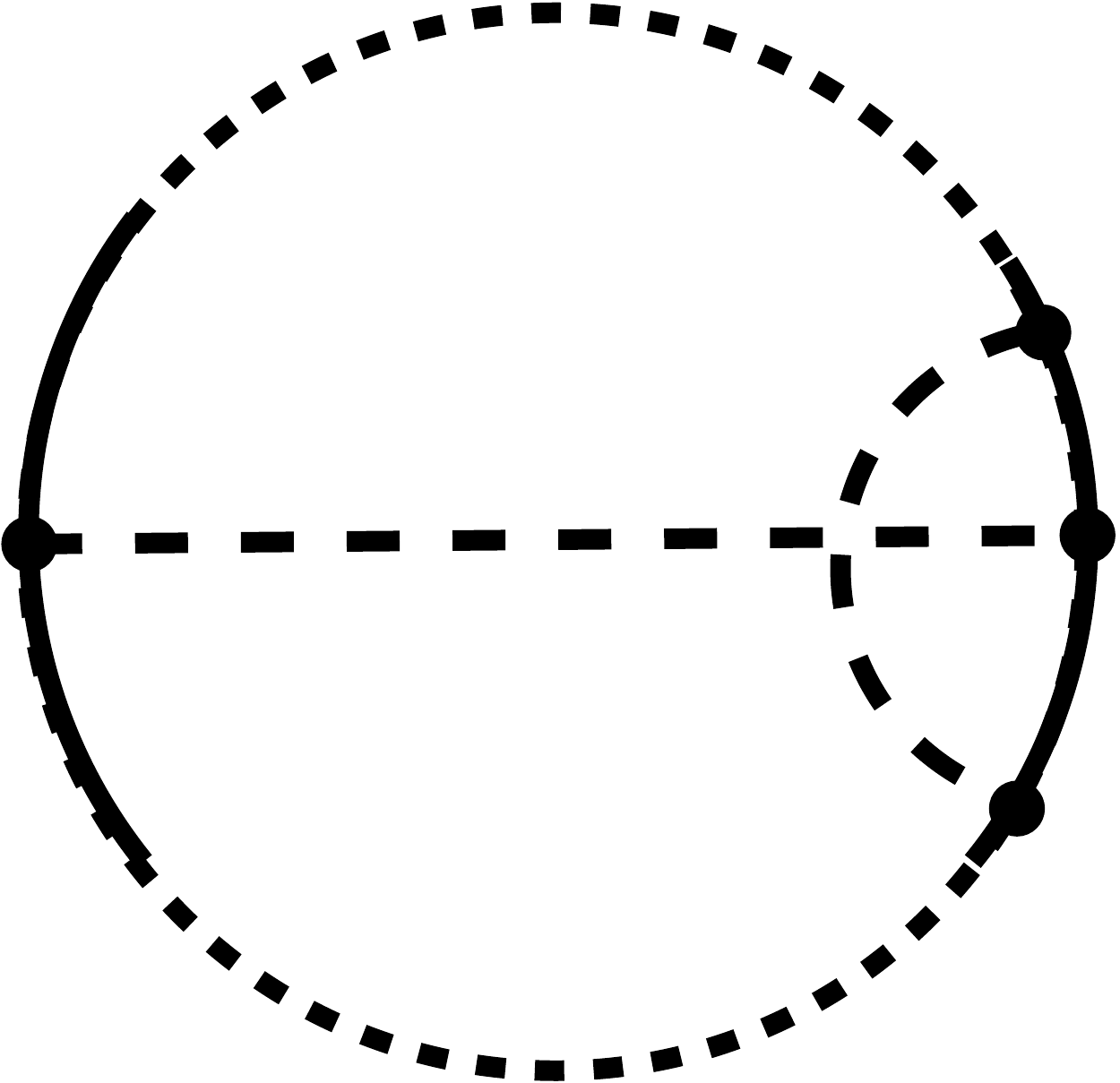} \raisebox{0.4cm}{--} \includegraphics[width=1cm]{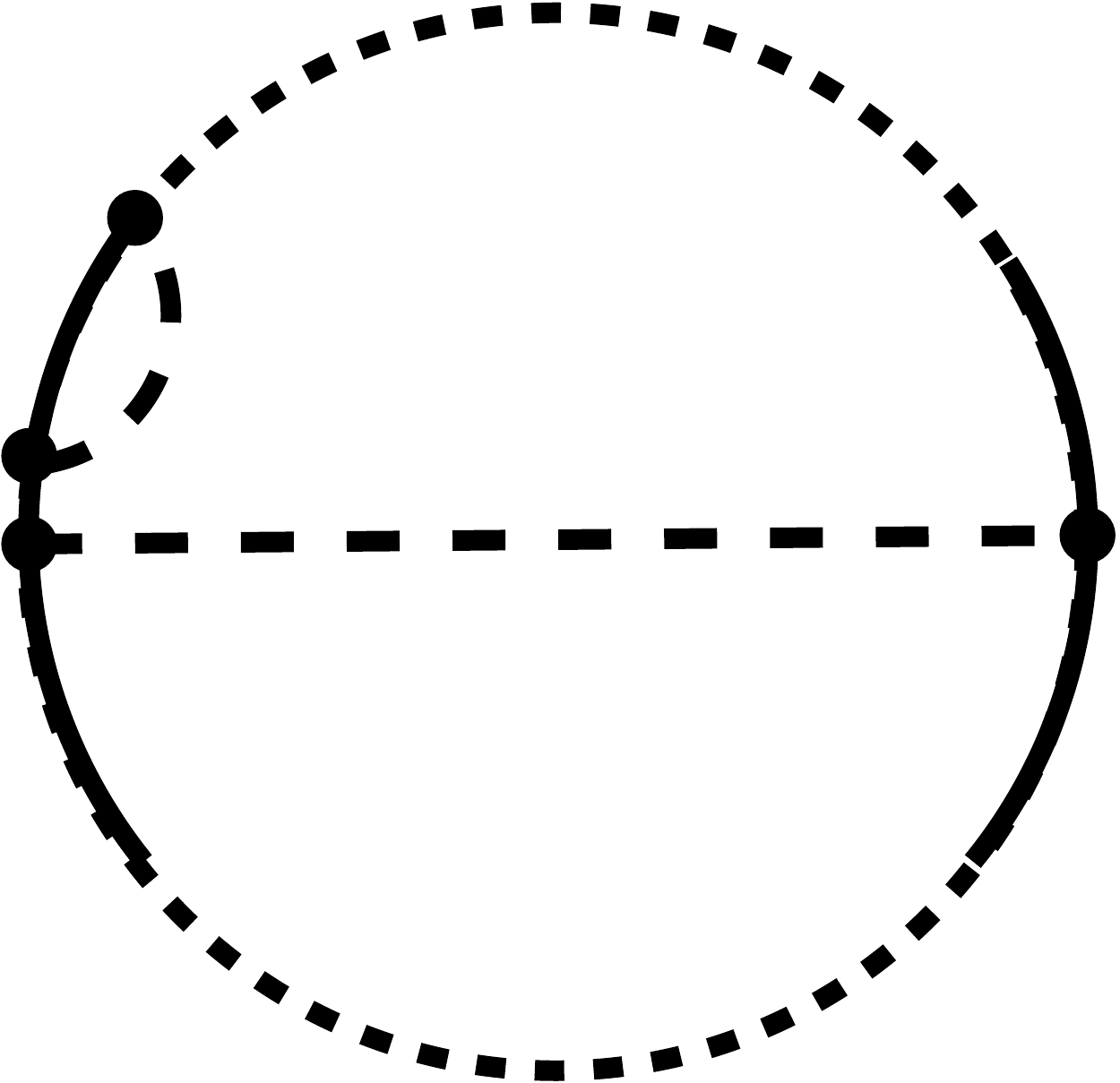} \raisebox{0.4cm}{--} \includegraphics[width=1cm]{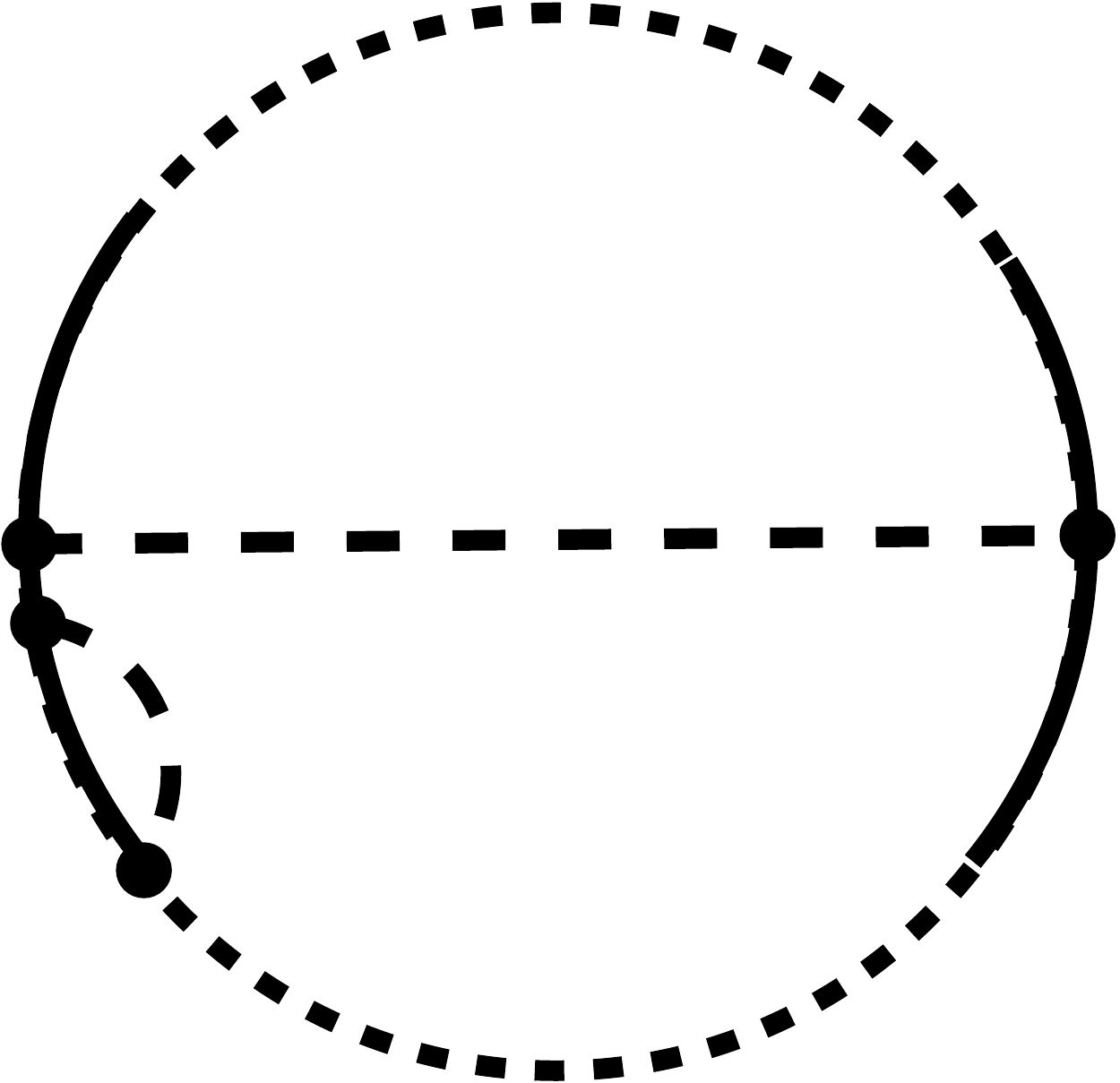} \raisebox{0.4cm}{+} \includegraphics[width=1cm]{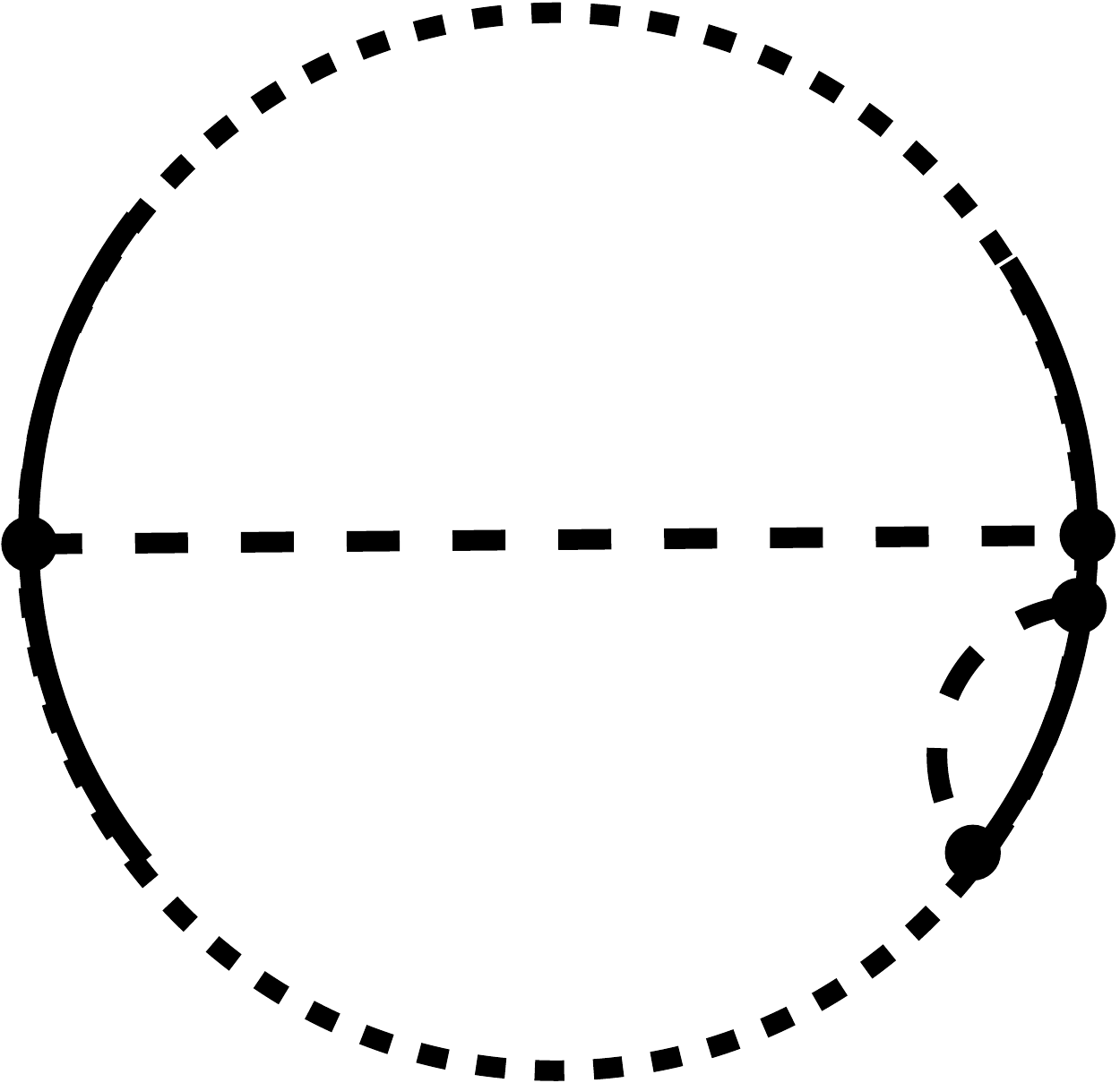}
\raisebox{0.4cm}{+} \includegraphics[width=1cm]{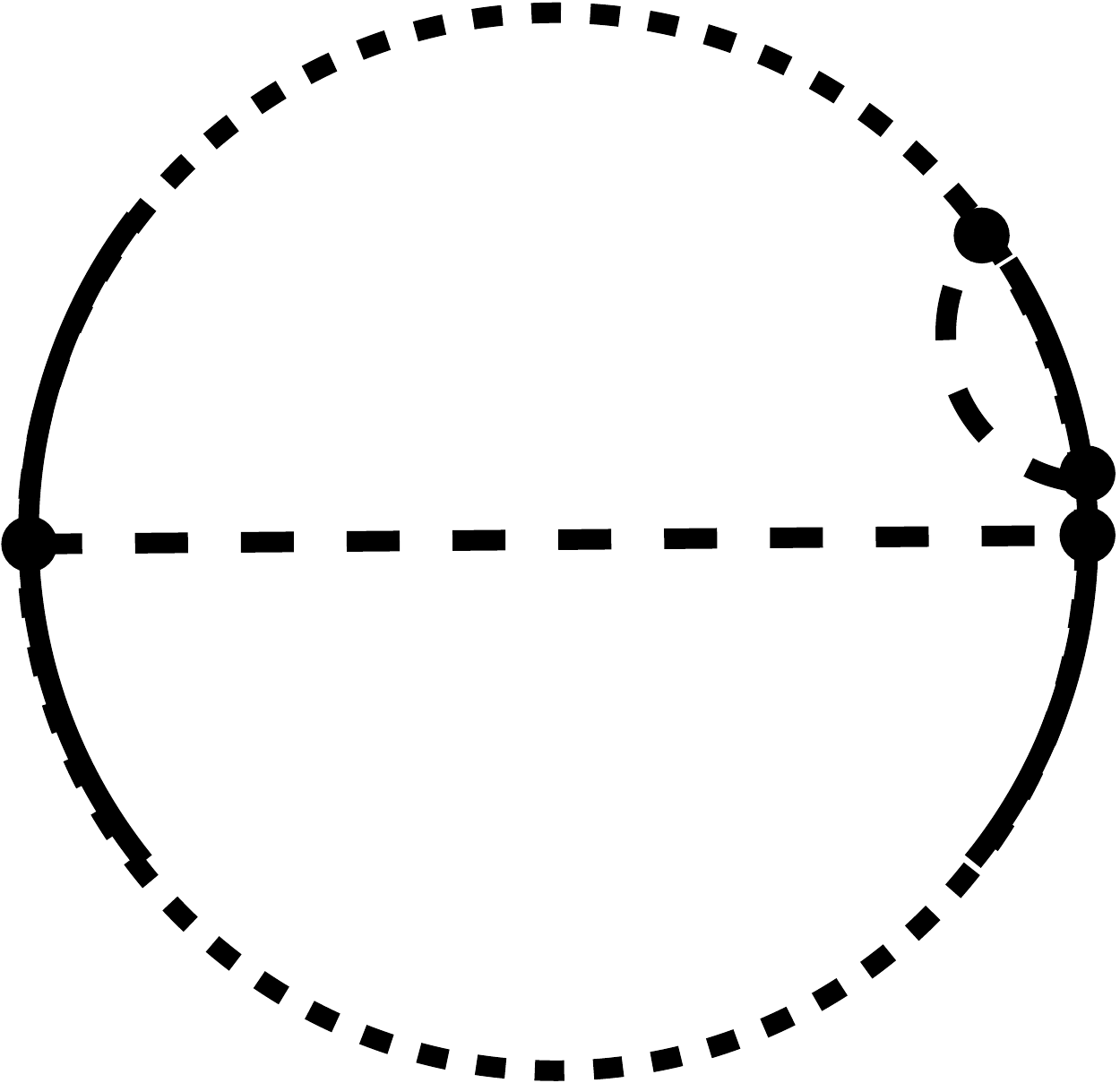}
 \raisebox{0.4cm}{=0}
\caption{Relations in $Co_\mathcal M$.}\label{white}
\end{center}
\end{figure}

The proof that the listed identities are satisfied is the same for each of the series: it is possible to split the diagrams taking part in a relation into 4 pairs. The difference of the diagrams forming a pair equals to the difference of a pair of diagrams with one oriented chord. The resulting linear combination of diagrams with one oriented chords vanishes identically.

The dimensions of the graded components of the spaces $\mathcal M$ and $Co_\mathcal M$ for small number of chords obtained by computer calculations made in Sage (\cite{Sage}) are
 \begin{center}
\begin{tabular}{c|c|c|c|c|c}
$n$ & 1 & 2 & 3 & 4 & 5\\
\hline
$\dim \mathcal M^n$ & 2 & 5 & 12 & 30 & 73  \\
\hline
$\dim Co^n_\mathcal M$ & 1& 2 & 5 & 12 & 29
\end{tabular}
\end{center}

The computational complexity of the problem grows very quickly with~$n$. For example, to calculate $\dim \mathcal M^5$ one needs to find the dimension of a subspace spanned by 20017 vectors in a vector space of dimension 3112.

A basis in the set of relations in $Co^4_\mathcal M$ is shown in Figure~\ref{basis}.

\begin{figure}[h]
\begin{center}

\includegraphics[width=1cm]{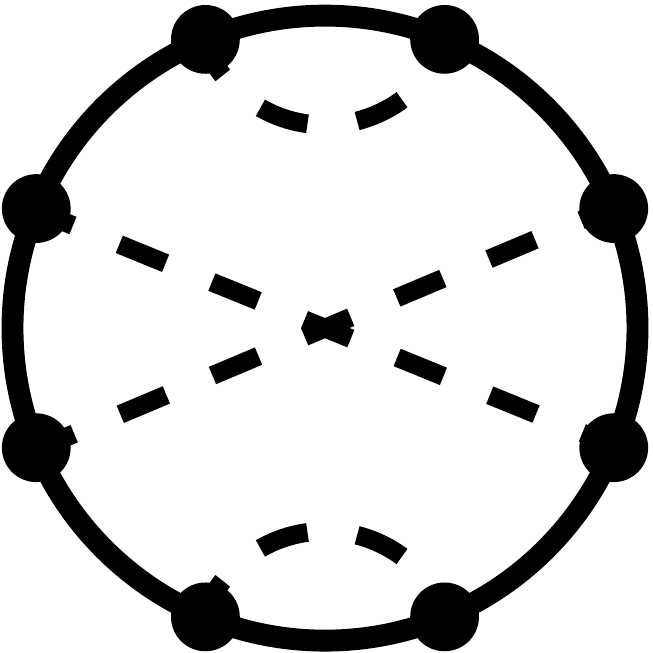} \raisebox{0.4cm}{--} \includegraphics[width=1cm]{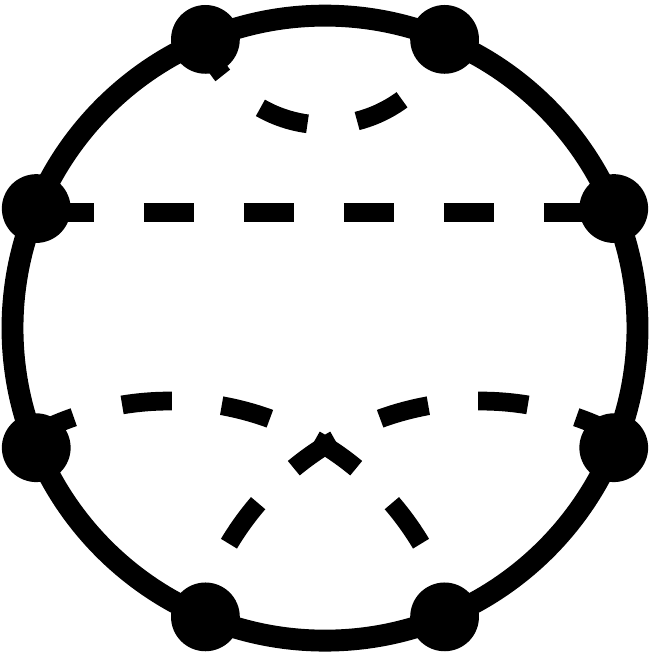}
\vspace{0.4cm}

\includegraphics[width=1cm]{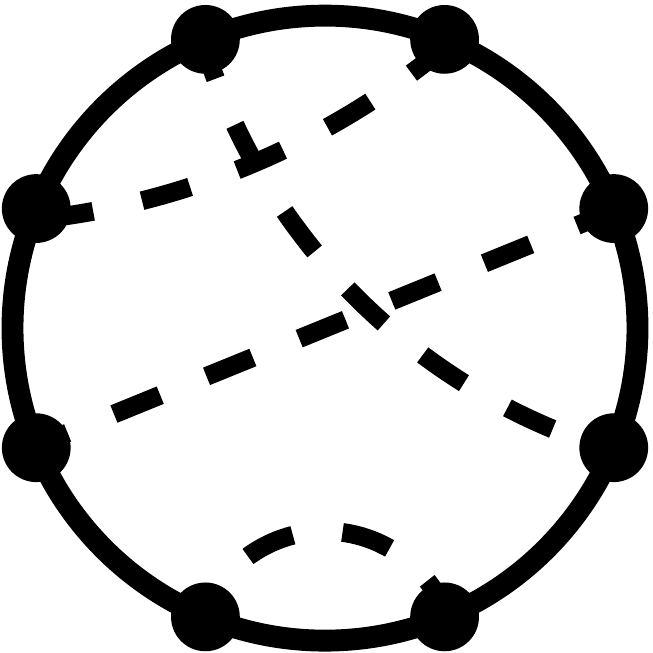} \raisebox{0.4cm}{--} \includegraphics[width=1cm]{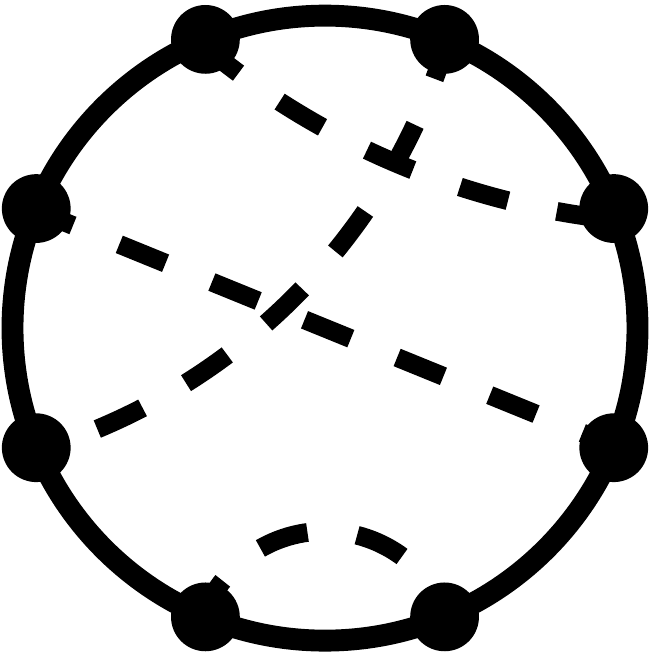}
\vspace{0.4cm}

\includegraphics[width=1cm]{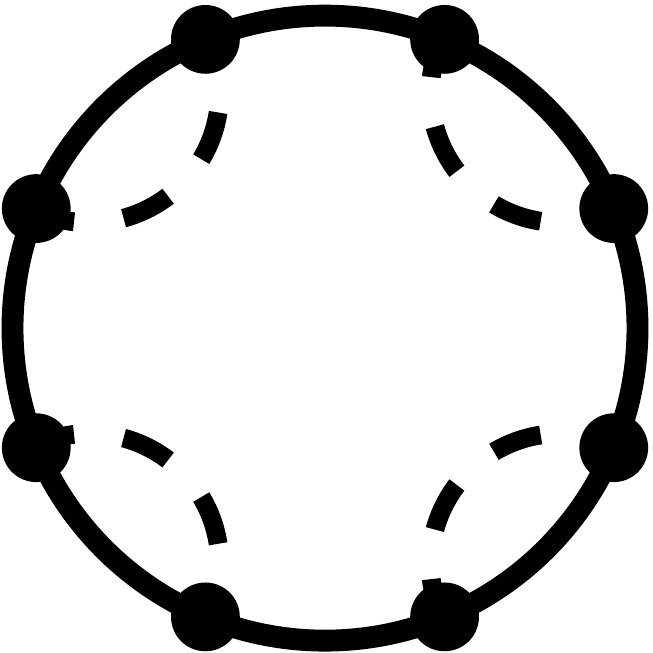} \raisebox{0.4cm}{-- 2} \includegraphics[width=1cm]{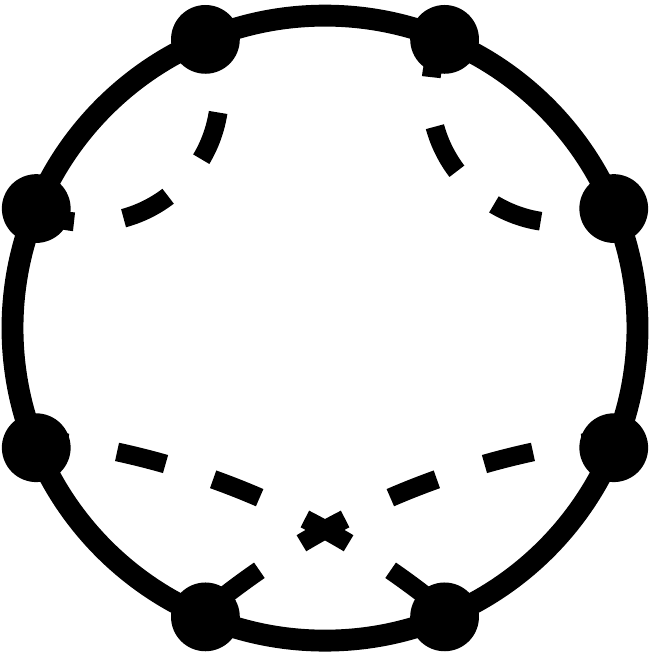} \raisebox{0.4cm}{--} \includegraphics[width=1cm]{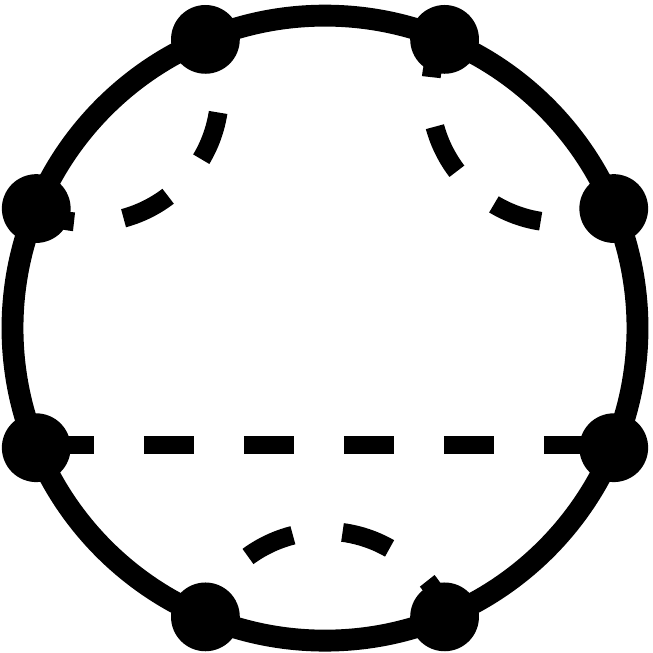} \raisebox{0.4cm}{+2} \includegraphics[width=1cm]{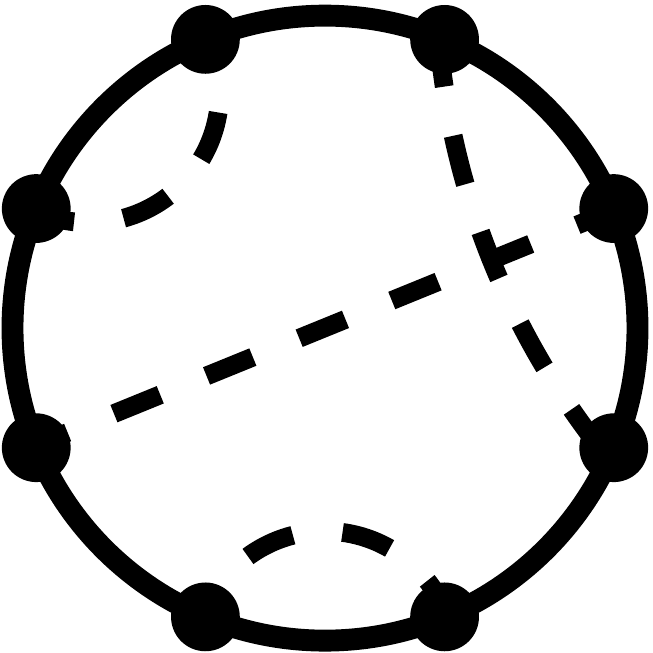} 
\vspace{0.4cm}

 \includegraphics[width=1cm]{cd4-02.pdf} \raisebox{0.4cm}{-- 2} \includegraphics[width=1cm]{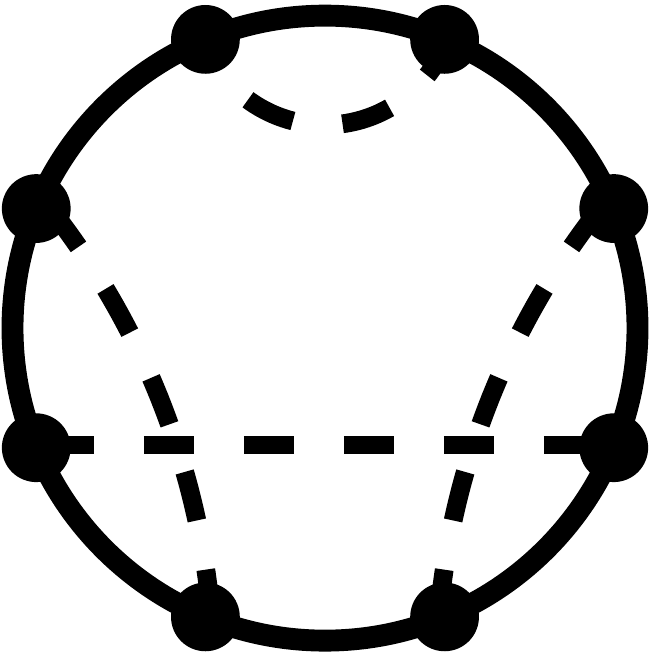} \raisebox{0.4cm}{+ 2} \includegraphics[width=1cm]{cd4-08.pdf} \raisebox{0.4cm}{--} \includegraphics[width=1cm]{cd4-10.pdf} 
\vspace{0.4cm}

 \includegraphics[width=1cm]{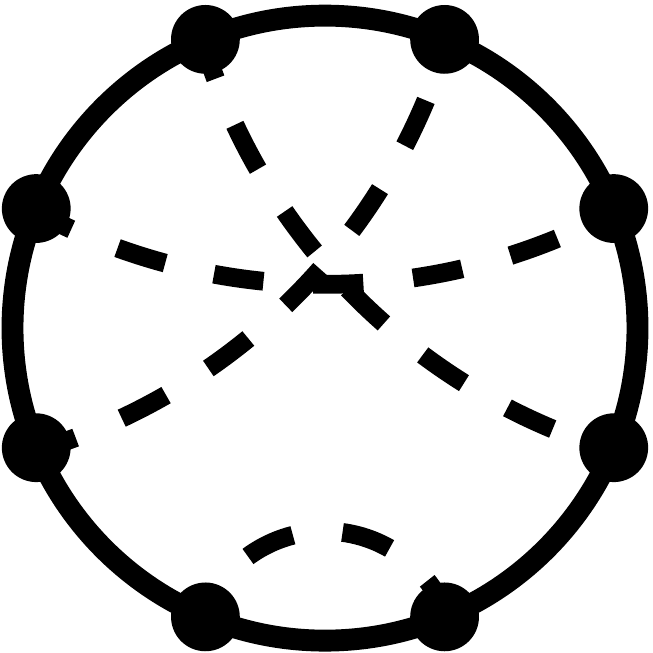} \raisebox{0.4cm}{-- 2} \includegraphics[width=1cm]{cd4-08.pdf} \raisebox{0.4cm}{+ } \includegraphics[width=1cm]{cd4-10.pdf} \raisebox{0.4cm}{--} \includegraphics[width=1cm]{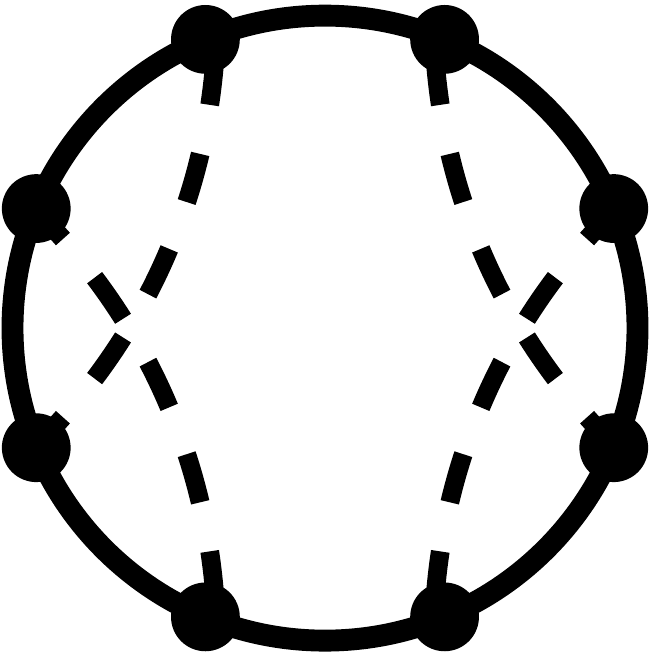} \raisebox{0.4cm}{+2}
 \includegraphics[width=1cm]{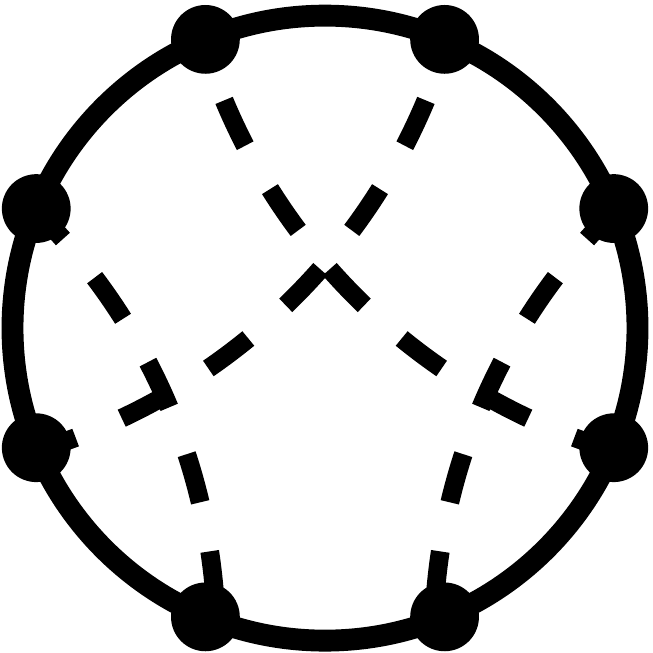} \raisebox{0.4cm}{--}
 \includegraphics[width=1cm]{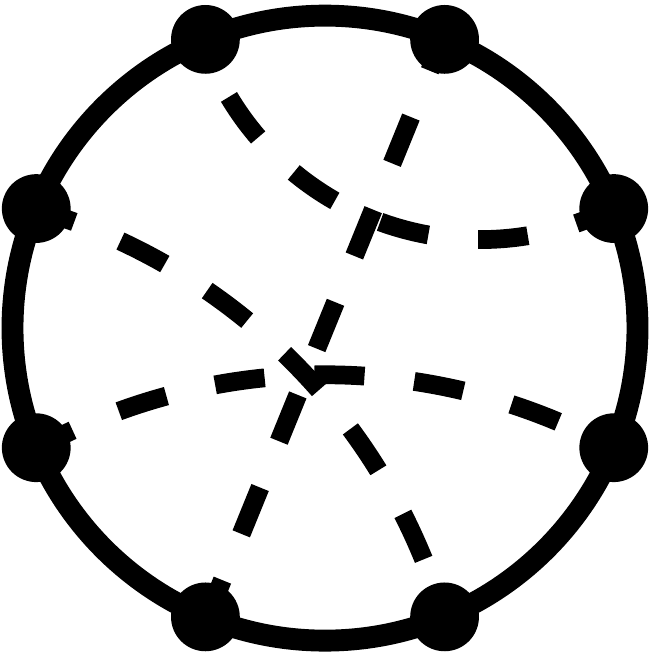}
\vspace{0.4cm}

\includegraphics[width=1cm]{cd4-08.pdf} \raisebox{0.4cm}{-- 2} \includegraphics[width=1cm]{cd4-13.pdf} \raisebox{0.4cm}{+ 2} \includegraphics[width=1cm]{cd4-14.pdf} \raisebox{0.4cm}{--} \includegraphics[width=1cm]{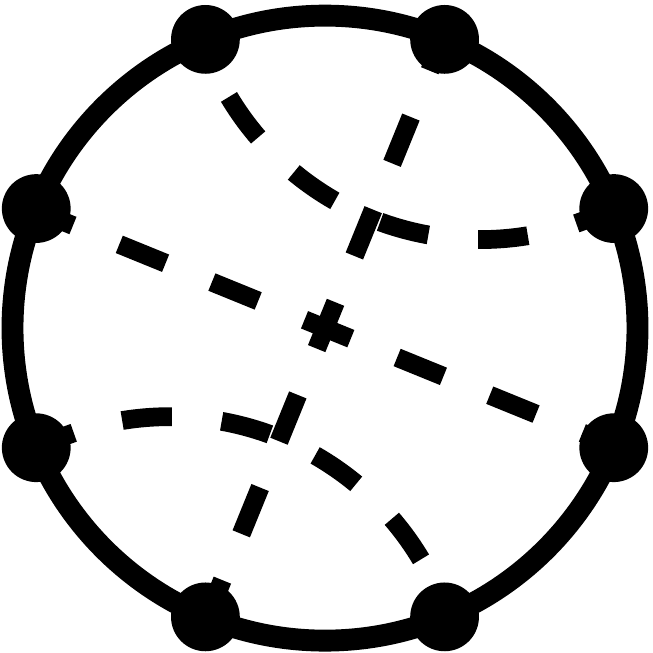} \raisebox{0.4cm}{--}
 \includegraphics[width=1cm]{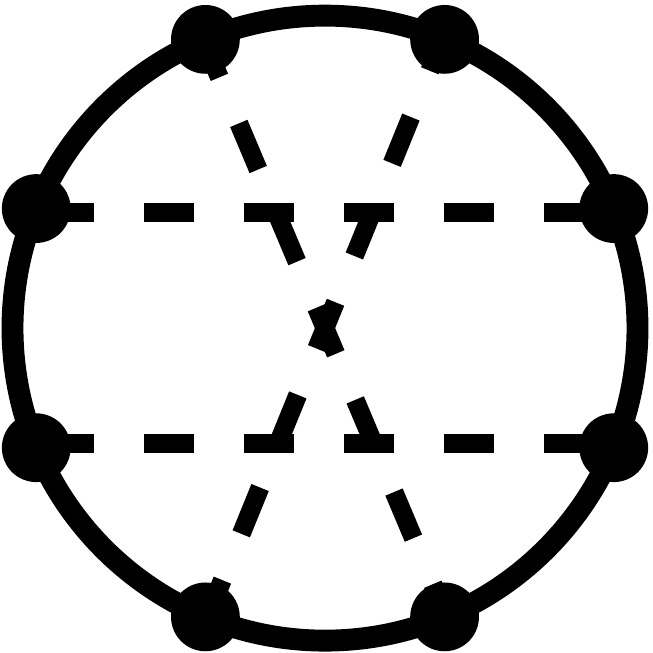} \raisebox{0.4cm}{+}
 \includegraphics[width=1cm]{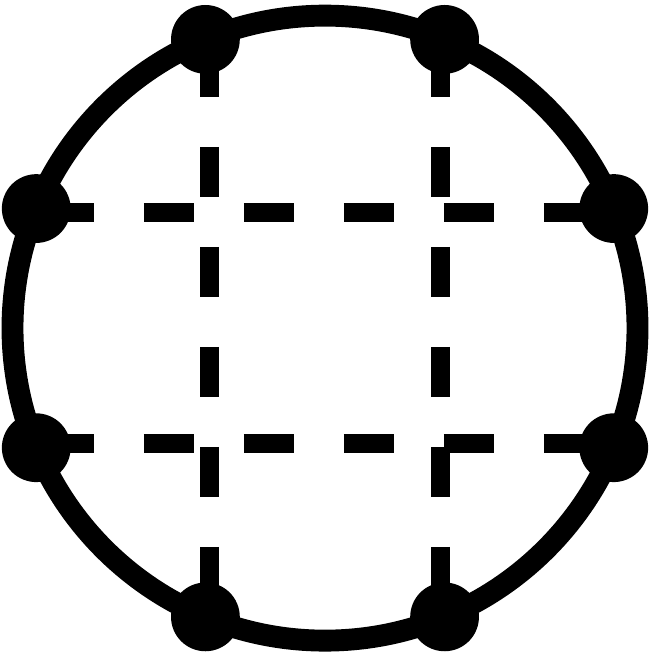}
 
\caption{A basis in the space of relations in  $Co^4_\mathcal M$.}\label{basis}
\end{center}
\end{figure}

\section{Framed graphs}

Recall that the \emph{Lando graph algebra} (see~\cite{LandoHopf}) over $\mathbb K$ is defined as the vector space spanned by all possible finite simple graphs modulo the 4T-relations usually written in the form
$$\Gamma -\Gamma'_{uv} = \tilde \Gamma_{uv} - \tilde \Gamma'_{uv}.$$

This formula has the following meaning. Let $\Gamma$ be a graph, and let $u, v$ be two its vertices joined by an edge. Then $\Gamma'_{uv}$ denotes the graph obtained from $\Gamma$ by erasing the edge between $u$ and $v$. The graph $\tilde \Gamma_{uv}$ is obtained from $\Gamma$ by the following operation: for every vertex $w$ different from $u,v$ and connected by an edge with $v$, the vertices $u$ and $w$ are joined by an edge in $\tilde \Gamma_{uv}$ if and only if the vertices $u$ and $w$ are not joined in $\Gamma$. The adjacencies of all the other possible pairs of vertices in $\Gamma$ and $\tilde \Gamma_{uv}$ are the same. Finally, $\tilde \Gamma'_{uv}$ is obtained from $\tilde \Gamma_{uv}$ by erasing the edge between $u$ and $v$. An example of a 4T-relation is shown in Figure~\ref{graphs}.
\begin{figure}[ht]
\begin{center}
\includegraphics[width=8cm]{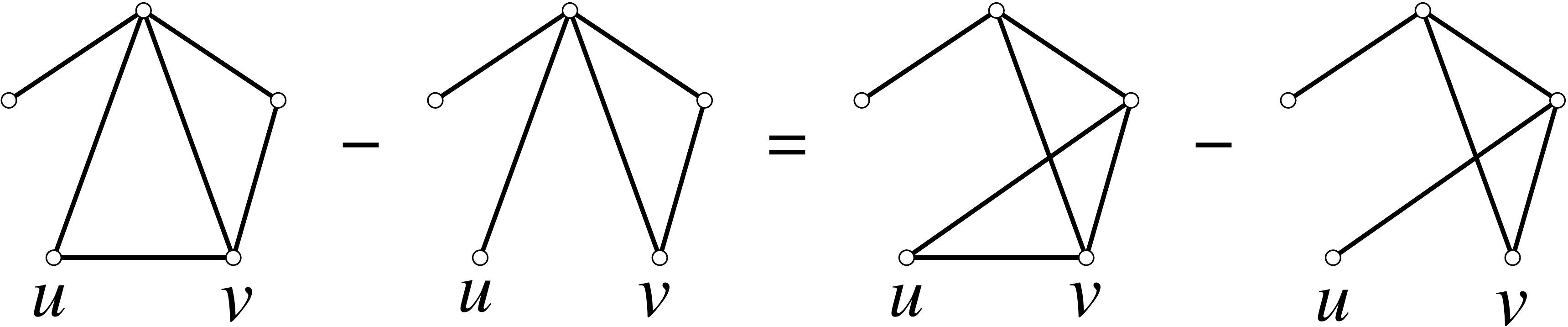}
\end{center}
\caption{An example of a 4T-relation.}\label{graphs}
\end{figure}

The vector space of all finite graphs modulo the 4T-relations is denoted by $\mathcal G$. The linear maps
$$m_\mathcal G \colon \mathcal G\otimes \mathcal G \to \mathcal G,$$
taking the tensor product of two graphs to their disjoint union, and
$$\Delta_\mathcal G \colon \mathcal G \to \mathcal G\otimes \mathcal G,$$
taking a graph $\Gamma$ to
$$\Delta_\mathcal G (\Gamma) = \sum \Gamma_I\otimes \Gamma_J,$$
where the sum is taken over all possible ways to decompose the set of vertices of $\Gamma$ into the union of two non-intersecting sets $I$ and $J$, and $\Gamma_I$ is a complete subgraph of $\Gamma$ on the set of vertices $I$, along with natural operations of unit, counit and antipode, endow $\mathcal G$ with the structure of graded commutative cocommutative graded Hopf algebra. The grading of a graph is the number of its vertices.

The framed version of the Lando graph algebra was also introduced in~\cite{Lando}.

\begin{defn} By a \emph{framed graph} $\Gamma$ we mean a finite simple graph along with a fixed \emph{framing}. A framing is a map from the set of vertices of the graph to $\mathbb Z/2\mathbb Z$.
\end{defn}

In the following, we are going to refer to the vertices of framing $0$ as \emph{black vertices}, and to the vertices of framing $1$ as \emph{white vertices}.

The $\mathbb K$-linear space spanned by framed graphs modulo the \emph{framed 4T-relations} (see below) is denoted by $\mathcal H$. In notation introduced earlier, the framed 4T-relations read
$$\Gamma -\Gamma'_{uv} = (-1)^{f(v)}(\tilde \Gamma_{uv} - \tilde \Gamma'_{uv}),$$
where $f(v)$ denotes the framing of the vertex $v$ in $\Gamma$, and the framing of the vertex $u$ of graphs $\tilde \Gamma_{uv}$ and $\tilde \Gamma'_{uv}$ equals $f(u)+f(v)$.

Similarly to $\mathcal G$, the space $\mathcal H$ admits operations of a product $m_\mathcal H$ (induced by the disjoint union of graphs), and a coproduct $\Delta_\mathcal H$ taking a framed graph $\Gamma$ to
$$\Delta_\mathcal H (\Gamma) = \sum \Gamma_I\otimes \Gamma_J,$$ where summation again is carried 
over all possible choices of a subset $I$ of vertices of $\Gamma$, and the framing of $\Gamma_I$ 
is supposed to be the restriction of the framing of $\Gamma$.

\begin{thm}[Lando \cite{Lando}]
The operations $m_\mathcal H$ and $\Delta_\mathcal H$, along with natural maps of unit, counit, and antipode (whose definitions are similar to those for $\mathcal G$), endow $\mathcal H$ with the structure of graded commutative cocommutative Hopf algebra.
\end{thm}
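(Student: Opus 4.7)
The plan is to follow the standard argument for the unframed Lando algebra $\mathcal G$, taking extra care with the sign $(-1)^{f(v)}$ and the framing change of $u$ in the framed 4T-relation. All axioms other than well-definedness of $\Delta_\mathcal H$ on the quotient reduce to formal statements about disjoint unions and subset decompositions of finite sets; the framed-specific content is concentrated in the compatibility of $\Delta_\mathcal H$ with the framed 4T-relation.

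Well-definedness of $m_\mathcal H$ is immediate: applying a framed 4T-relation centred at $uv$ inside one summand of a disjoint union yields the same framed 4T-relation centred at $uv$ in the disjoint union, with the same sign and the same framing modification of $u$.

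For $\Delta_\mathcal H$, fix a framed 4T-relation centred at an edge $uv$ of $\Gamma$, and split the sum $\sum_I \Gamma_I\otimes\Gamma_J$ over subsets $I\subset V(\Gamma)$ according to the positions of $u$ and $v$. If $u,v\in I$, then the 4T-operation only affects adjacencies and framings inside $I$, so the four contributions assemble into a framed 4T-relation on $\Gamma_I$ tensored with the common factor $\Gamma_J$; the case $u,v\in J$ is symmetric. If $u\in I$ and $v\in J$, then the $J$-side is unaffected by the 4T-operation (every flipped edge touches $u$, and $u\notin J$, and the framing shift sits on $u$) while the $I$-side is unaffected by the deletion of the edge $uv$ (this edge is absent from $\Gamma_I$), so the four terms cancel in two pairs independently of the sign $(-1)^{f(v)}$. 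The case $v\in I$, $u\in J$ is handled analogously.

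Given well-definedness, associativity and commutativity of $m_\mathcal H$ follow from those of disjoint union, while coassociativity and cocommutativity reflect the manifest symmetry of the enumeration of ordered decompositions $V(\Gamma)=I\sqcup J$ and of iterated such decompositions. The bialgebra compatibility uses the bijection between subsets of $V(\Gamma_1)\sqcup V(\Gamma_2)$ and pairs $(I_1,I_2)$ with $I_j\subset V(\Gamma_j)$, together with the fact that complete subgraphs of a disjoint union split as disjoint unions of complete subgraphs. The grading by the number of vertices is preserved by both structure maps since the framed 4T-relations are homogeneous, and $\mathcal H$ is connected in this grading (the degree-0 part is $\mathbb K$, spanned by the empty graph), so the antipode $\mathcal S_\mathcal H$ is produced by the standard inductive formula for connected graded bialgebras. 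The only step where I expect real work is the mixed case of coproduct well-definedness, because that is where the sign $(-1)^{f(v)}$ and the framing shift of $u$ actually have to be tracked; everything else is a transcription of the unframed argument.
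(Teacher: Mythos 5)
The paper gives no proof of this theorem: it is imported verbatim from Lando's article \cite{Lando}, so there is nothing internal to compare against. Your argument is correct and is the standard one; in particular you isolate the only genuinely framed point (the mixed case $u\in I$, $v\in J$ of the coproduct check, where the terms cancel in pairs because the edge deletion does not touch $\Gamma_I$ and the edge flips and framing shift all sit on $u\notin J$) and handle it correctly.
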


\begin{defn}
Let $\Gamma$ be a framed graph. We say that $\Gamma$ \emph{is black} if all its vertices are black, and we say that it \emph{is white} if all its vertices are white.
\end{defn}

Note that there is a Hopf algebras monomorphism $\mathcal G\to \mathcal H$.

Define the map $Pr_\mathcal H$ as
$$Pr_\mathcal H (\Gamma) = \left\{\begin{array}{l} \Gamma,\mbox{ if $\Gamma$ is a black graph,}\\					
						0, \mbox{ otherwise.}\end{array}\right.$$
 Put
$$\delta_\mathcal H = (Pr_\mathcal H \otimes id)\circ \Delta_\mathcal H.$$

As in the case of framed chord diagrams, the following theorem holds.

\begin{thm}
The space $\mathcal H$, along with the maps $m_\mathcal H|_{\mathcal G\otimes \mathcal H}$ and $\delta_H$, is a $\mathcal G$-Hopf module.
\end{thm}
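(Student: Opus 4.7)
The plan is to mirror, step by step, the argument used earlier for $\mathcal M$ being an $\mathcal A$-Hopf module, since every structural ingredient on the graph side (product by disjoint union, coproduct by subset-of-vertices decomposition, the black/white distinction, and the projector $Pr_\mathcal H$) is the exact combinatorial counterpart of what happens on the chord diagram side. First I would verify the three conditions in the definition of a Hopf module, taking the first two as essentially free.

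The $\mathcal G$-module structure on $\mathcal H$ is just the restriction of $m_\mathcal H$ to $\mathcal G\otimes \mathcal H$ via the monomorphism $\mathcal G\hookrightarrow \mathcal H$, so associativity and unitality are inherited from the Hopf algebra structure on $\mathcal H$ given by Lando's theorem. For the $\mathcal G$-comodule axioms for $\delta_\mathcal H=(Pr_\mathcal H\otimes id)\circ\Delta_\mathcal H$, coassociativity $(id\otimes\delta_\mathcal H)\circ\delta_\mathcal H=(\Delta_\mathcal G\otimes id)\circ\delta_\mathcal H$ follows because both sides, when unwound on a framed graph $\Gamma$, equal the sum over ordered triples $(I_1,I_2,J)$ of disjoint subsets of the vertex set of $\Gamma$ with $I_1\cup I_2$ consisting of black vertices only, and the counit identity reduces to the observation that the only graph in $\mathcal G$ of nonzero counit is the empty graph, so $(\epsilon_\mathcal G\otimes id)\circ\delta_\mathcal H$ picks out precisely the term where the black subset $I$ is empty.

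The substantive step is showing that $\delta_\mathcal H$ is a morphism of $\mathcal G$-modules. Given a black graph $c\in\mathcal G$ and a framed graph $d\in\mathcal H$, the product $c\cdot d$ is the disjoint union $c\sqcup d$, whose vertex set is $V(c)\sqcup V(d)$. Every subset $I$ of vertices splits uniquely as $I_c\sqcup I_d$ with $I_c\subseteq V(c)$ and $I_d\subseteq V(d)$, and the induced subgraph on $I$ is $c_{I_c}\sqcup d_{I_d}$, while the complementary subgraph is $c_{I_c'}\sqcup d_{I_d'}$. Crucially, $c_{I_c}$ is automatically a black graph, so applying $Pr_\mathcal H$ to the left tensor factor restricts the sum only via the condition that $d_{I_d}$ be black as well. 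Hence
\begin{align*}
\delta_\mathcal H(c\cdot d)&=\sum_{I_c\subseteq V(c)}\sum_{\substack{I_d\subseteq V(d)\\ d_{I_d}\text{ black}}}(c_{I_c}\sqcup d_{I_d})\otimes(c_{I_c'}\sqcup d_{I_d'}),
\end{align*}
which coincides with $m_{\mathcal G\otimes\mathcal H}(c\otimes\delta_\mathcal H(d))$ after one expands the definition of the module structure on $\mathcal G\otimes\mathcal H$ via $\Delta_\mathcal G$ on $c$, the swap, and the two multiplications.

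The only place that might hide a subtlety is checking that the framed 4T-relations on $\mathcal H$ are respected throughout: the product $m_\mathcal H$ is well-defined by Lando's theorem, and $\Delta_\mathcal H$ and $Pr_\mathcal H$ are obviously well-defined by inspection, so $\delta_\mathcal H$ is a composition of well-defined linear maps, and no further 4T-checking is necessary. This is why the argument, although it looks entirely parallel to the chord diagram case, is actually easier here—the combinatorics of disjoint unions and vertex subsets decouples the two factors perfectly, and no analog of the cut-point independence issue arises.
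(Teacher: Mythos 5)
Your proposal is correct and follows essentially the same route as the paper: verify the comodule axioms by interpreting both sides as sums over decompositions of the black vertex set, and verify the module-morphism condition by splitting each vertex subset of the disjoint union $c\sqcup d$ as $I_c\sqcup I_d$ so that the sum factors into $m_{\mathcal G\otimes\mathcal H}(c\otimes\delta_\mathcal H(d))$. The paper states these steps more tersely (``obvious'') but the underlying argument is identical to the one you spell out.
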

\begin{proof}
First, we have to check that $\mathcal H$ is a $\mathcal G$-comodule. The equality
$$ (\Delta_\mathcal G\otimes id) \circ \delta_\mathcal H = (id\otimes \delta_\mathcal H)\circ \delta_\mathcal H $$
is obvious, since both sides of the equality correspond to the decompositions of the set of black vertices of a diagram into three disjoint subsets. The equality
$$\iota\circ (\epsilon_\mathcal G\otimes id)\circ\delta_\mathcal H=id$$
is also straightforward, due to the definition of $\epsilon_\mathcal G$.

Similarly to the case of framed chord diagrams, the value of $\delta_\mathcal H$ on a product of a black graph and an arbitrary framed graph is expressible as a sum over all possible ways to choose a collection of black vertices from both factors. The equality $\delta_\mathcal H \circ m_\mathcal H (\Gamma_1\otimes \Gamma_2) = m_{\mathcal G\otimes \mathcal H} (\Gamma_1\otimes \delta_\mathcal H (\Gamma_2))$ for any black graph $\Gamma_1$ and any framed graph $\Gamma_2$ is obvious.
\end{proof}

The Larson-Sweedler theorem implies that $\mathcal H$ as a $\mathcal G$-Hopf module is isomorphic to $\mathcal G\otimes Co_\mathcal H$, where $Co_\mathcal H$ is the covariant submodule of $\mathcal H$.

\begin{thm}
The space $Co_\mathcal H$ is spanned by white graphs.
\end{thm}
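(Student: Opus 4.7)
The plan is to mirror the proof of Proposition~\ref{structure} for framed chord diagrams. I would first verify that every white graph is covariant, so that the span $W$ of white graphs in $\mathcal H$ sits inside $Co_\mathcal H$, and then show that modulo the framed 4T-relations every framed graph is a linear combination of disjoint unions $\Gamma_b\sqcup\Gamma_w$ of a black graph with a white graph. Together these say that the restriction $m_\mathcal H\colon\mathcal G\otimes W\to\mathcal H$ is surjective; composing with the inverse of the Larson--Sweedler isomorphism $m_\mathcal H\colon\mathcal G\otimes Co_\mathcal H\to\mathcal H$ shows that the inclusion $\mathcal G\otimes W\hookrightarrow\mathcal G\otimes Co_\mathcal H$ is surjective, forcing $W=Co_\mathcal H$.

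Covariance of a white graph $\Gamma$ is immediate from the definition of $\delta_\mathcal H$: in $\Delta_\mathcal H(\Gamma)=\sum\Gamma_I\otimes\Gamma_J$ every left factor $\Gamma_I$ is white, so $Pr_\mathcal H(\Gamma_I)$ vanishes unless $I=\emptyset$, leaving $\delta_\mathcal H(\Gamma)=1\otimes\Gamma$.

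For the reduction step I would order framed graphs lexicographically by the pair $(b(\Gamma),k(\Gamma))$, where $b(\Gamma)$ is the number of black vertices and $k(\Gamma)$ is the number of edges joining a black vertex to a white one, and induct. If $b(\Gamma)=0$ the graph is already white; if $k(\Gamma)=0$ the black and white vertex sets span disjoint components of $\Gamma$, so $\Gamma$ itself already has the form $\Gamma_b\sqcup\Gamma_w$. Otherwise pick any edge $uv$ with $f(u)=0$, $f(v)=1$; since $(-1)^{f(v)}=-1$, the framed 4T-relation gives
$$\Gamma \;=\; \Gamma'_{uv} \;-\; \tilde\Gamma_{uv} \;+\; \tilde\Gamma'_{uv}.$$
In $\Gamma'_{uv}$ all framings are unchanged while the edge $uv$ is erased, so $b$ stays constant and $k$ drops by one. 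In $\tilde\Gamma_{uv}$ and $\tilde\Gamma'_{uv}$ the framing of $u$ is replaced by $f(u)+f(v)=1$, so $u$ becomes white and $b$ strictly decreases. Every term on the right is therefore strictly simpler than $\Gamma$, and the induction closes.

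The main obstacle is choosing a complexity that the framed 4T-relation actually reduces: the edge-count $k$ alone is not a monovariant, because the toggling of edges incident to $u$ in $\tilde\Gamma_{uv}$ can freely create or destroy BW-edges depending on the colours of the neighbours of $v$. Using $b$ as the primary coordinate sidesteps this difficulty --- the rule that the new framing of $u$ equals $f(u)+f(v)$ forces an unconditional drop in $b$ whenever the 4T-relation is applied across a BW-edge with $f(v)=1$, so the tilde terms are absorbed by induction on $b$ while $\Gamma'_{uv}$ is handled by induction on $k$.
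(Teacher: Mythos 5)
Your proof is correct and follows essentially the same route as the paper's: both reduce a framed graph by applying the framed 4T-relation to a black--white edge with the white endpoint playing the role of $v$, so that the deletion term loses a mixed edge while the two tilde terms lose a black vertex. The only difference is bookkeeping --- you run a lexicographic induction on the pair $(b,k)$, whereas the paper packages the same descent into the single numerical complexity $2^b\frac{(b-1)!}{w!}N_\Gamma$.
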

\begin{proof}
The strategy of the proof is exactly the same as in the case of framed chord diagrams. It is clear that the subspace spanned by white graphs is contained in $Co_\mathcal H$.

Now we show that any framed graph $\Gamma$ is representable as a linear combination of products of a black graph by a white graph. Take an arbitrary framed graph. If all its vertices have the same framing, then we are done.

Suppose now that not all the vertices of $\Gamma$ have the same framing. Define the \emph{complexity} of the graph as
$$C_\Gamma = 2^b\frac{(b-1)!}{w!}N_\Gamma,$$
where $b$ is the number of black vertices, $w$ is the number of white vertices, and $N_\Gamma$ is the number of edges of $\Gamma$ that connect vertices of different framing.

As in the case of framed chord diagrams, if the complexity is 0, then the graph is clearly representable as a product of a black graph by a white graph. The minimal possible non-zero complexity of $\Gamma$ is $2^b\frac{(b-1)!}{w!}$

Let the complexity of $\Gamma$ is greater than zero. Take an edge connecting two vertices of different framings, and apply the 4T-relation to this edge, using the white vertex as the vertex $v$. We obtain the presentation of $\Gamma$ as a linear combination of three graphs. One of these graphs is obtained from $\Gamma$ by deletion of the edge connecting vertices of different framings, 
thus its complexity is smaller than that of~$G$. Two other graphs have one white vertex more than $\Gamma$. The maximal possible complexity of a graph with $(b-1)$ black vertices and $(w+1)$ white vertices is
$$ 2^{b-1}\cdot (b-1)(w+1)\cdot \frac{(b-2)!}{(w+1)!} = 2^{b-1}\frac{(b-1)!}{w!}$$
which is definitely less than  $2^b\frac{(b-1)!}{w!}$.
\end{proof}

Similarly to the case of $\mathcal M$, the space $\mathcal H$ admits a bigrading coming from its representation as the tensor product $\mathcal G\otimes Co_\mathcal H$.

Note that the space $Co_\mathcal H$ is not just a subspace of $\mathcal H$, it is a Hopf subalgebra with the product, coproduct, unit, counit and antipode given by the restrictions of the corresponding operations to $\mathcal H$. Therefore, due to the Milnor-Moore theorem, it is a polynomial algebra with generators given by primitive elements.

Similarly to the case of white diagrams,  the set of white graphs is not a basis of $\mathcal H$. The dimensions of graded components of the spaces $\mathcal G$ and $\mathcal H$ (calculated by I.~Dynnikov and enlisted in~\cite{LandoShkol}), and the numbers of simple unlabeled graphs on $n$ vertices (Sloane's A000088, see~\cite{Sloane}), prescribe the existence of a non-trivial relation involving white graphs on 4 vertices.

Unfortunately, we do not know a general form of a non-trivial relation in the space $Co_\mathcal H$

Using the dimensions of graded components of $\mathcal G$ and $\mathcal H$, we were able to calculate the dimensions of graded components of $Co_\mathcal H$ and its primitive subspace $PCo_\mathcal H$ for small $n$ (for the reader's convenience,
the dimensions of graded components of $\mathcal H$ from~\cite{LandoShkol} are also reproduced):
\begin{center}
\begin{tabular}{c|c|c|c|c|c}
$n$ & 1 & 2 & 3 & 4 & 5\\
\hline
$\dim \mathcal H^n$ & 2 & 5 & 11 & 26 & 58 \\
\hline
$\dim Co^n_\mathcal H$ & 1 & 2 & 4 & 9 & 19  \\
\hline
$\dim PCo^n_\mathcal H$ & 1& 1 & 2 & 4 & 8
\end{tabular}
\end{center}

\section{Framed weight systems and the intersection graph of a framed chord diagram and}

Recall that a linear map from $\mathcal A$ to an abelian group $A$ is traditionally called an \emph{$A$-valued weight system}. Weight systems are important objects; they allow one to construct finite-type knot invariants, due to the Vassiliev--Kontsevich theorem (see, for example~\cite{CDbook}).

\begin{defn}
Let $A$ be an abelian group. A linear map from the space $\mathcal M$ to $A$ is called an \emph{$A$-valued framed weight system}.
\end{defn}

Gven an arbitrary weight system, it is possible to construct a framed weight system. Recall, that for the description of the $\mathcal A$-Hopf module structure on $\mathcal M$, we exploited the projection operator $Pr_{\mathcal M}$. Let us introduce one more projection operator $\mathcal M \to \mathcal A$. Define the action of the \emph{discoloration operator} $D_\mathcal M \colon \mathcal M \to \mathcal A$ on a framed chord diagram $d$ by the rule:
$$ D_\mathcal M (d) = (-1)^w d',$$
where $w$ is the number of disorienting chords of $d$, and $d'$ is obtained from $d$ by setting the framings of all the chords to be 0. Extend $D_\mathcal M$ to $\mathcal M$ by linearity.

\begin{prop}
The operator $D_\mathcal M$ is well-defined.
\end{prop}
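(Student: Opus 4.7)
The plan is to verify that $D_\mathcal M$ descends to the quotient by checking that it annihilates the 4T-relations of both type (a) and type (b). Since $D_\mathcal M$ is defined on a generating set of $M$, it suffices to check that each defining relation of $\mathcal M$ is sent to a relation (namely, to the corresponding type (a) 4T-relation, or to zero) in $\mathcal A$.

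I would first dispose of type (a). Recall that in a type (a) relation the four diagrams differ only in the position of the jumping chord; the set of oriented chords and the set of disorienting chords are the same in all four pictures, and in particular the framing of the jumping chord itself is not changed. Consequently the number $w$ of disorienting chords is common to all four diagrams, so the overall sign $(-1)^w$ factors out of the image under $D_\mathcal M$. What remains after forgetting the framings is exactly the corresponding type (a) 4T-relation in $\mathcal A$, which holds by definition.

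The subtler case is type (b). Here I would pair the six diagrams of Figure~\ref{4T}(b) according to the three differences displayed. Within each pair the two diagrams differ only by the position of the jumping chord, so $w$ is constant across that pair and a single sign $(-1)^w$ factors out of the pair. Between different pairs, however, the framings of the explicitly shown chords are reassigned, and the parity of $w$ changes in a prescribed fashion. The task is to check that, after pulling out these signs, all three differences in Figure~\ref{4T}(b) produce, up to a common sign, the same four-term combination of unframed diagrams in $\mathcal A$, so that the images of the two equalities in Figure~\ref{4T}(b) are tautologies in $\mathcal A$. The main obstacle is precisely this sign-bookkeeping: one has to confirm pair-by-pair that the parities of $w$ alternate in exactly the manner compatible with the $+/-$ pattern fixed in the figure. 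Once this alternation is verified, well-definedness of $D_\mathcal M$ follows.
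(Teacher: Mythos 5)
Your approach coincides with the paper's, which disposes of the proposition in one line by observing that the image of any framed 4T-relation under $D_\mathcal M$ is a non-framed 4T-relation; your treatment of type (a) is complete. For type (b), however, you correctly isolate the sign-bookkeeping as the only real content of the proof and then leave it as ``the task'' --- that check should actually be performed, since it is the whole point. It closes in one line: in a type (b) relation the jumping chord jumps over a \emph{disorienting} chord, and between consecutive pairs in Figure~\ref{4T}(b) its framing is flipped (this is what the dashed versus solid jumping chord records), so $w$ changes parity by exactly one from one pair to the next; the resulting factor of $-1$ in $(-1)^w$ is exactly absorbed by the sign reversal built into the relation (the reversed order of the differences), so all three displayed differences map to one and the same four-term combination in $\mathcal A$, up to a common overall sign, and that combination vanishes by the ordinary 4T-relation.
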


\begin{proof}
Straightforward, as the image of an arbitrary framed 4T-relation under $D_\mathcal M$ is a non-framed 4T-relation.
\end{proof}

\emph{Remark:} The discoloration operator can also be used to define a comultiplication operator $\delta'_\mathcal M\colon \mathcal M \to \mathcal A\otimes \mathcal M$ as
$$\delta'_\mathcal M = (D_\mathcal M \otimes id)\circ \Delta_\mathcal M.$$
The operator $\delta'_\mathcal M$, along with $m_\mathcal M$, provides $\mathcal M$ with one more $\mathcal A$-Hopf module structure, different from the one we have actually used.
\vspace{0.3cm}    

It is easy to check, that given an arbitrary non-framed weight system $w$, the composition $w\circ D_\mathcal M$ is a framed weight system.

Another example of a framed weight system can be given using the notion of the \emph{intersection graph}.

The intersection graph of a (non-framed) chord diagram $d$ is defined as the graph $\Gamma_d$ whose vertices are in one-to-one correspondence with the chords of $d$, and two vertices are connected by an edge if the corresponding chords intersect (see~\cite{CDbook}). This construction was first proposed by S. Chmutov, S. Duzhin, and S. Lando  in 1994.

\begin{defn}
Let $d$ be a framed chord diagram. The \emph{intersection graph of $d$} is the graph $\Gamma_d$ whose vertices are in one-to-one correspondence with the chords of $d$. Two vertices of $\Gamma$ are connected by an edge if and only if the corresponding chords of $d$ intersect (this means that their endpoints $a_1, a_2$ and $b_1,b_2$ appear in the interlacing order $a_1,b_1,a_2,b_2$ while travelling along the circle). The framing of $\Gamma_d$ is inherited from that of $d$.
\end{defn}

Define the map $I\colon \mathcal M \to \mathcal H$ as the linear extension of the correspondence $d\mapsto \Gamma_d$. We use the same letter for the restriction of this map $I\colon \mathcal A\to \mathcal G$.

\begin{thm}
The map $I$ is a well-defined linear map. The squares
$$
\xymatrix{
\mathcal A \otimes \mathcal M \ar[r]^{I\otimes I} \ar[d]^{m_\mathcal M} & \mathcal G\otimes \mathcal H \ar[d]^{m_\mathcal H}\\
\mathcal M \ar[r]^I & \mathcal H
}$$
and
$$
\xymatrix{
\mathcal M \ar[r]^{I} \ar[d]^{\delta_\mathcal M} & \mathcal H \ar[d]^{\delta_\mathcal H}\\
\mathcal A\otimes \mathcal M \ar[r]^{I\otimes I} & \mathcal G\otimes \mathcal H
}\hspace{1cm}
\xymatrix{
\mathcal M \ar[r]^{I} \ar[d]^{\Delta_\mathcal M} & \mathcal H \ar[d]^{\Delta_\mathcal H}\\
\mathcal M\otimes \mathcal M \ar[r]^{I\otimes I} & \mathcal H\otimes \mathcal H
}$$
are commutative.
\end{thm}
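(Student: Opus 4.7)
The plan is to reduce each assertion to a local combinatorial check about how a single ``endpoint jump'' affects the intersection graph, and to observe that decompositions and glueings at the level of diagrams translate directly into decompositions and disjoint unions at the level of graphs.

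First, for well-definedness of $I$, I would show that each framed 4T-relation in $\mathcal M$ maps under $I$ to a framed 4T-relation (or a consequence thereof) in $\mathcal H$. The key geometric observation is that in a 4T-relation the jumping chord $c_u$ slides one of its endpoints across the endpoint of exactly one other chord $c_v$. Therefore, the set of chords intersected by $c_u$ changes in a controlled way: the adjacency between the vertex $u$ and every other vertex $w\neq v$ either stays the same or toggles according to whether $w$ was adjacent to $v$, while the edge $uv$ itself is deleted in two of the four terms. This is precisely the combinatorial content of $\Gamma-\Gamma'_{uv}=\pm(\tilde\Gamma_{uv}-\tilde\Gamma'_{uv})$. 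For the type (a) relations the signs are the usual ones; for the type (b) relations I would match the sign $(-1)^{f(v)}$ of the graph relation against the chord-diagram signs and verify that the framing $f(u)+f(v)$ induced on $u$ in $\tilde\Gamma_{uv}$ coincides with the framing inherited from the corresponding diagrams.

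Second, for the square with $m_\mathcal M$, note that the product of a black diagram $c$ and a framed diagram $d$ is obtained by cutting each supporting circle at a non-endpoint and glueing. The chords coming from $c$ then have all their endpoints on one arc of the resulting circle, and the chords of $d$ on the complementary arc. Hence no chord of $c$ can interlace with a chord of $d$, so $\Gamma_{cd}=\Gamma_c\sqcup\Gamma_d$ with framings inherited unchanged, which is exactly $m_\mathcal H(\Gamma_c\otimes\Gamma_d)$.

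Third, for the two coproduct squares, both $\Delta_\mathcal M$ and $\Delta_\mathcal H$ are sums over partitions of the set of chords, respectively vertices, into two disjoint subsets. Since the chords of $d$ are in canonical bijection with the vertices of $\Gamma_d$ and the intersection graph of a subdiagram $d_I$ is the induced subgraph of $\Gamma_d$ on the corresponding vertex set with inherited framing, the identity $(I\otimes I)\circ\Delta_\mathcal M=\Delta_\mathcal H\circ I$ follows term by term. The square with $\delta_\mathcal M$ then reduces to the same statement after composing with the projections, using the trivial fact that $d$ is black if and only if $\Gamma_d$ is black, whence $I\circ Pr_\mathcal M=Pr_\mathcal H\circ I$.

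The main obstacle is the first step: carefully bookkeeping the $\pm$ signs and the induced framing on $u$ in the type (b) framed 4T-relations, and verifying that the framed graph 4T-relation is shaped to absorb the image of each framed chord-diagram 4T-relation without extra terms. Once this is settled, the three commutativity checks are a direct unpacking of definitions.
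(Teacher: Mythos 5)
Your proposal is correct and follows essentially the same route as the paper's (much terser) proof: well-definedness via checking that each framed 4T-relation of diagrams maps to a framed 4T-relation of graphs, the product square via $\Gamma_{cd}=\Gamma_c\sqcup\Gamma_d$, and the coproduct squares via the term-by-term correspondence between subsets of chords and induced subgraphs together with $I\circ Pr_\mathcal M=Pr_\mathcal H\circ I$. The only difference is that you spell out the local ``endpoint jump'' bookkeeping that the paper delegates to the non-framed case and to Lando's construction of the framed graph relations.
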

\begin{proof}
We have to check that a linear combination of framed chord diagrams vanishing due to the 4T-relation maps to a linear combination of framed graphs vanishing due to the 4T-relations as well. It is a straightforward generalization of the corresponding statement in the non-framed case, and the proof is the same.
It is clear that the multiplication of chord diagrams corresponds to the disjoint union of graphs. This implies the commutativity of the first square. The commutativity of the second and the third square is also straightforward.
\end{proof}

Using $I$, we can construct framed weight systems. Indeed, let $w$ be a linear map from $\mathcal H$ to an abelian group $A$. Then $w\circ I$ is obviously a framed weight system.

Let us recall that the \emph{chromatic polynomial} $ch$ is a well-defined map from $\mathcal G$ to $\mathbb K[x]$ (this fact was first remarked by S.V.~Duzhin).

A natural extension of the chromatic polynomial to the framed graphs takes values in the polynomial algebra $\mathbb K[x,y]$.  Let $\Gamma$ is a framed graph, $e$ is an edge of $\Gamma$, and $y$ and $z$ are the vertices adjacent to~$e$. We define the framed chromatic polynomial, which we also denote by $ch$, via the skein-relation
$$ ch(\Gamma) = ch(\Gamma'_e) - (-1)^{f(y)+f(z)} ch(\Gamma''_e),$$
and the additional relations
$$ ch(\Gamma \sqcup *_b) = x\cdot ch(\Gamma),$$
$$ ch(\Gamma \sqcup *_w) = y\cdot ch(\Gamma),$$
$$ ch (\emptyset) = 1$$
where the graph $\Gamma'_e$ is obtained from $\Gamma$ by deleting the edge $e$, and the graph $\Gamma''_e$ is obtained from $\Gamma$ by contracting $e$. The framings of the vertices of $\Gamma'_e$, and all the vertices of $\Gamma''_e$, except for the vertex $v$ appearing as the result of edge contraction, are inherited from that of the corresponding vertices of $\Gamma$. In the case of contraction, we set $f(v)=0$ if $f(x)=f(y)=0$, and 1 otherwise\footnote{Formally, $f(v) = f(y)+f(z)+f(y)f(z)$.}. The symbols $*_b,*_w$ and $\emptyset$ denote the graph with one black vertex and no edges, the graph with one white vertex and no edges, and the empty graph, respectively. The value of the chromatic polynomial on any graph containing loops is 0.

Note, that if we restrict our attention to the case of non-framed graphs, then we obtain the usual definition of the chromatic polynomial.

Extend the correspondence $\Gamma \mapsto ch(\Gamma)$ by linearity.

\begin{prop}
The framed chromatic polynomial is defined by the above presented skein-relations uniquely. It extends to a well-defined linear map $ch\colon \mathcal H \to \mathbb K[x,y]$.
\end{prop}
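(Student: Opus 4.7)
The plan is to prove the proposition in two stages. Stage one establishes that the skein relations, together with the multiplicativity rules for isolated vertices, unambiguously assign a polynomial $ch(\Gamma) \in \mathbb K[x,y]$ to each framed graph $\Gamma$. Stage two checks that this assignment vanishes on the framed 4T-relations defining $\mathcal H$, so that $ch$ descends to a linear map $\mathcal H \to \mathbb K[x,y]$.

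For stage one I would induct on the number of edges. Loopless edgeless graphs are handled by the disjoint-union rules, which force $ch(\Gamma) = x^b y^w$ where $b, w$ count the black and white vertices; graphs with a loop are zero by fiat. For the inductive step, pick any edge $e$ and apply the skein: both $\Gamma'_e$ and $\Gamma''_e$ have strictly fewer edges, so their values are determined by induction. To prove independence of the choice of edge, I would fix two distinct edges $e, e'$ and expand $ch(\Gamma)$ via the skein in both orders. This yields four terms indexed by the choice of deletion or contraction at each of $e, e'$. The three terms involving at least one deletion match trivially, because deletion preserves the framings of the endpoints of any other edge. The contract-contract term is the only subtle one: the framing of the intermediate merged vertex enters the exponent of the second sign factor, and one must verify that the product of the two signs is the same whichever edge is contracted first. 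This reduces to a direct computation in $\mathbb Z/2\mathbb Z$ using the footnote formula $f(v) = f(y)+f(z)+f(y)f(z)$ together with the identity $f(\cdot)^2 = f(\cdot)$.

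For stage two, I would apply the skein to the distinguished edge $uv$ on both sides of $\Gamma - \Gamma'_{uv} = (-1)^{f(v)}(\tilde\Gamma_{uv} - \tilde\Gamma'_{uv})$. The deletion terms cancel pairwise within each side, leaving only the contraction terms. The combinatorial heart of the argument is to identify $\Gamma''_{uv}$ and $\tilde\Gamma''_{uv}$ as framed graphs: for every $w \neq u, v$, the vertex $w$ is adjacent to the merged vertex in either contraction precisely when $w$ was adjacent to $u$ or to $v$ in the original graph, because the switching operation defining $\tilde\Gamma_{uv}$ toggles exactly the edges $uw$ with $w$ a neighbor of $v$, so the neighborhood union is preserved under the merger. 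The framings of the merged vertex agree via $(f(u)+f(v)) + f(v) + (f(u)+f(v))f(v) \equiv f(u)+f(v)+f(u)f(v) \pmod{2}$. A short comparison of the remaining sign factors with the $(-1)^{f(v)}$ prefactor then completes the verification.

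The main obstacle is stage one, specifically the sign identity in the contract-contract case, where the particular framing rule for the merged vertex is genuinely tested. Once the skein is known to be consistent, the 4T-compatibility reduces to the purely combinatorial identification $\Gamma''_{uv} = \tilde\Gamma''_{uv}$ and a comparison of two $(-1)^{f(\cdot)+f(\cdot)}$ signs.
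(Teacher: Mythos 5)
Your outline reproduces the paper's own proof, which is itself only a sketch: reduce to edgeless graphs by the skein, assert that independence of the order in which edges are resolved is ``straightforward,'' and get 4T-compatibility by applying the skein to the distinguished edge $uv$ on both sides. Your stage two is fine (the identification $\Gamma''_{uv}=\tilde\Gamma''_{uv}$ and the framing computation for the merged vertex are exactly right). The gap is in stage one: the ``direct computation in $\mathbb Z/2\mathbb Z$'' that you correctly single out as the crux is never performed, and if you perform it with the conventions as stated it fails. Take two edges sharing a vertex, say a path on vertices $y,z,y'$ with edges $e=yz$ and $e'=zy'$ (the only nontrivial configuration). Contracting $e$ first and then $e'$ contributes the sign exponent
$$\bigl(f(y)+f(z)\bigr)+\bigl(f(v)+f(y')\bigr)=f(y)f(z)+f(y')\pmod 2,$$
using $f(v)=f(y)+f(z)+f(y)f(z)$, while contracting $e'$ first gives $f(y)+f(z)f(y')$ by symmetry. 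These disagree, e.g.\ for $f(y)=1$, $f(z)=f(y')=0$. Concretely, for the path with one white end-vertex and two black vertices, resolving the white--black edge first yields $x^2y+y$, while resolving the black--black edge first yields $x^2y-y$. So with the sign $(-1)^{f(y)+f(z)}$ the skein is not order-independent, and stage one collapses; the failure sits precisely in the contract--contract term you flagged as the ``main obstacle.''

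The mismatch disappears if the exponent of the sign is $f(y)f(z)$ instead of $f(y)+f(z)$ (keeping the merging rule $f(v)=f(y)+f(z)+f(y)f(z)$): then both orders give the exponent $f(y)f(z)+f(y)f(y')+f(z)f(y')+f(y)f(z)f(y')$, the restriction to black graphs is still the usual deletion--contraction relation, and your stage-two sign comparison still works, since $f(u)f(v)\equiv f(v)+\bigl(f(u)+f(v)\bigr)f(v)\pmod 2$. So your proof can be repaired, but not by leaving the consistency check as an assertion: you must either carry it out with a corrected sign convention and say explicitly which convention you are using, or record that the skein relation as printed is inconsistent. As written, the step on which your whole stage one rests is false.
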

\begin{proof}
Consider a framed graph $\Gamma$. First note that using the first skein relation one is able to present $\Gamma$ as a linear combination of edgeless graphs. We need to check that the resulting combination does not depend on the order we get rid of the edges when applying the skein relation. This check is straightforward.

Now it is clear that the framed chromatic polynomial is uniquely defined by the skein relation.

Finally, we have to check that the appropriate linear combination of the values of the framed chromatic polynomial on a 4-tuple of graphs taking part in a 4T-relation, vanishes. As in the non-framed case this follows from the fact that both sides of a 4T relation are the same modulo the skein relation defining $ch$.
\end{proof}

\emph{Remark:} The polynomial algebra $\mathbb K[x,y]$ also admits a Hopf algebra structure (with $x$ and $y$ being the primitive generators of degree 1). The chromatic polynomial $ch\colon \mathcal H\to \mathbb K[x,y]$ is a morphism in the category of Hopf algebras. \vspace{0.3cm}

Recall, that given a $A$-valued weight system $w$, and $A'$-valued weight system $w'$, the composition $(w\otimes w')\circ \Delta_\mathcal A$ is an $A\otimes A'$-valued weight system (the \emph{convolution product} of weight systems). Using the comodule structure $\delta_\mathcal M$ on $\mathcal M$ we can extend this operation to produce framed weight system   

\begin{thm}
Let $A,A'$ are abelian groups,  $w$ is an $A$-valued weight system, and $w'$ is an $A'$-valued framed weight system. Then the composition map
$$ (w\otimes w') \circ\delta_\mathcal M\colon \mathcal M \to A\otimes A'$$
is an $A\otimes A'$-valued framed weight system.
\end{thm}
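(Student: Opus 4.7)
The plan is to observe that the map in question is nothing but a composition of three maps that have already been shown to be well-defined linear maps on the appropriate spaces, and to exhibit this decomposition explicitly. The content of the theorem is then essentially a naturality statement, paralleling the classical fact that the convolution product of two ordinary weight systems via $\Delta_\mathcal A$ is again a weight system.

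First I would recall from earlier in the section that $\delta_\mathcal M = (Pr_\mathcal M \otimes id)\circ \Delta_\mathcal M$ is a well-defined $\mathbb K$-linear map $\mathcal M \to \mathcal A \otimes \mathcal M$, since it was constructed as a composition of linear maps that already descend to the quotient by the framed 4T-relations. In particular, it sends any framed 4T-combination in $\mathcal M$ to zero in $\mathcal A \otimes \mathcal M$.

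Next, by the hypotheses, $w \colon \mathcal A \to A$ is a linear map satisfying the (non-framed) 4T-relations, and $w' \colon \mathcal M \to A'$ is a linear map satisfying the framed 4T-relations. By the universal property of the tensor product, the assignment $a\otimes m \mapsto w(a)\otimes w'(m)$ extends to a well-defined linear map $w\otimes w' \colon \mathcal A \otimes \mathcal M \to A\otimes A'$. Composing, $(w\otimes w')\circ \delta_\mathcal M$ is a well-defined linear map $\mathcal M \to A\otimes A'$, which by the definition given at the beginning of the section is precisely an $A\otimes A'$-valued framed weight system.

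There is no real obstacle: every building block has been established already, and the abelian-group-valued setting does not introduce any subtlety because $w$ and $w'$ factor through $\mathcal A$ and $\mathcal M$, so only the well-definedness of $\delta_\mathcal M$ on $\mathcal M$ is needed. The only point worth emphasising in the write-up is that the comodule map $\delta_\mathcal M$ lands in $\mathcal A \otimes \mathcal M$ (not $\mathcal M \otimes \mathcal M$), which is exactly the reason one can feed in a (non-framed) weight system in the first tensor slot and a framed weight system in the second.
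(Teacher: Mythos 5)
Your proposal is correct and is essentially the paper's own argument: the paper dismisses the proof as ``trivial, since the map under consideration is a composition of well-defined linear maps,'' and you have simply spelled out that composition ($\delta_\mathcal M$ well-defined by its earlier construction, $w\otimes w'$ well-defined by the universal property of the tensor product). Your closing remark about $\delta_\mathcal M$ landing in $\mathcal A\otimes\mathcal M$ rather than $\mathcal M\otimes\mathcal M$ is a useful clarification but does not change the substance.
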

\begin{proof}
Trivial, since the map under consideration is a composition of well-defined linear maps.
\end{proof}

There are weight systems that are not determined by the intersection graph only. For example, there exists an order 11 weight system, that distinguishes two mutant knots~\cite{DistMutKnots}, and thus, by a theorem of Chmutov and Lando (see~\cite{ChL}), can not be obtained from a linear map from $\mathcal G$. Note, that even putting in the previous Theorem $w' = ch\circ I$, by a suitable choice of $w$, it is possible to construct framed weight system not determined by the intersection graph only.

\thebibliography{99}
\bibitem{Arnold} V.I. Arnold, \emph{Plane curves, their invariants, perestroikas and classifications.} In \emph{Singularities and bifurcations}, volume 21 of Adv. Soviet Math., 33-91. Amer. Math. Soc., Providence, RI, 1994. With an appendix by F. Aicardi.

\bibitem{CDbook} S. Chmutov, S. Duzhin, J. Mostovoy, \textit{Introduction to Vassiliev knot invariants.} Cambridge Universtity Press, 2012.

\bibitem{ChL} S.V. Chmutov, S.K. Lando, \emph{Mutant knots and intersection graphs.} Algebraic and Geometric topology, \textbf{7} (2007) 1579--1598.

\bibitem{goryunov} V.V.Goryunov, 
\emph{Finite order invariants of framed knots in a solid torus and in Arnold's (J+)-theory of plane curves,} Lecture Notes in Pure and Applied Mathematics, vol.184 (1997) `Geometry and Physics' (J.E.Andersen, J.Dupont, H.Pedersen and A.Swann, eds.), Marcel Dekker, Inc., New York--Basel--Hong Kong, 549-556.

\bibitem{Hill} J.W. Hill, \emph{Vassiliev-type invariants fo planar fronts without dangerous self-tangencies.} C. R. Acad. Sci. Paris, t. 324, S\'erie I, p. 537--542, 1997.

\bibitem{KS} V. Kleptsyn, E. Smirnov, \emph{Plane curves and bialgebra of Lagrangian subspaces.} Preprint. ArXiv:1401.6160.

\bibitem{LandoShkol} S.K. Lando, \emph{Invarianty graphov svyazannye s invariantami uzlov. Issledovatel'skie voprosy dlya shkol'nikov.} (in Russian). Available online at http://www.mccme.ru/mmks/mar08/Lando.pdf.

\bibitem{Lando} S.K. Lando, \textit{J-invariants of plane curves and framed chord diagrams.} Functional Analysis and Its Applications
January 2006, Volume 40, Issue 1, pp 1-10.

\bibitem{LandoHopf} S.K. Lando, \emph{On a Hopf Algebra in Graph Theory.} J. of Comb. Th., Series B \textbf{80}, 104-121 (2000).

\bibitem{LS} R.G. Larson, M. Sweedler, \textit{An associative orthogonal bilinear form for Hopf algebras.} Amer. J. Math. \textbf{91}, (73-93), 1969.

\bibitem{DistMutKnots} J. Murakami, \emph{Finite type invariants detecting the mutant knots.} Knot Theory, A volume dedicated to Professor Kunio Murasugi for his 70th birthday. Edited by M. Sakuma et al., Osaka University, March 2000.

\bibitem{Sage} \emph{Sage}, Open source mathematical software. www.sagemath.org

\bibitem{Sloane} \emph{Online encyclopedia of integer sequences}, Online at http://www.oeis.org
\end{document}